\newtheorem{theorem}{Theorem}[section]
\newtheorem{lemma}[theorem]{Lemma}
\newtheorem{corollary}[theorem]{Corollary}
\newtheorem{proposition}[theorem]{Proposition}\theoremstyle{definition}
\newtheorem{remark}[theorem]{Remark}
\newtheorem{example}[theorem]{Example}
\newtheorem{examples}[theorem]{Examples}
\newtheorem{definition}[theorem]{Definition}
\numberwithin{equation}{section}
\numberwithin{equation}{section}
\DeclareFontFamily{OT1}{pzc}{}
\DeclareFontShape{OT1}{pzc}{m}{it}{<-> s * [1.20] pzcmi7t}{}
\DeclareMathAlphabet{\mathpzc}{OT1}{pzc}{m}{it}
\newcommand{\nd}{{\ensuremath d}} 
\newcommand{\ignore}[1]{}
\newcommand{\beq}{\begin{eqnarray}}
\newcommand{\eeq}{\end{eqnarray}}
\newcommand{\beqq}{\begin{eqnarray*}}
\newcommand{\eeqq}{\end{eqnarray*}}
\newcommand{\hra}{\hookrightarrow}
\newcommand{\bit}{\begin{itemize}}
\newcommand{\eit}{\end{itemize}}
\newcommand{\bli}{\begin{list}{}{\labelwidth6mm\leftmargin8mm}}
\newcommand{\eli}{\end{list}}
\newcommand{\bpr}{\begin{proof}}
\newcommand{\epr}{\end{proof}}
\newcommand{\ds}{\displaystyle}
\newcommand{\R}{{\mathbb R}}
\newcommand{\real}{\R}
\newcommand{\Rn}{{\mathbb R}^{d}}
\newcommand{\rd}{\ensuremath{{\mathbb R}^\nd}}
\newcommand{\N}{\ensuremath{\mathbb N}}
\newcommand{\No}{\N_{0}}
\newcommand{\nat}{\N}
\newcommand{\no}{\No}
\newcommand{\Z}{\mathbb Z} 
\newcommand{\Zn}{\Z^{\nd}}
\newcommand{\zd}{\ensuremath{\mathbb{Z}^\nd}}
\newcommand{\C}{{\mathbb C}}
\newcommand{\ve}{\ensuremath\varepsilon}
\newcommand{\Gp}{{\mathcal G}_p}
\newcommand{\Gpx}[1]{{\mathcal G}_{#1}}
\newcommand{\Gpd}{{\mathcal G}_p(\mathbb{D})}
\newcommand{\ls}{\lesssim}
\newcommand{\dint}{\;\mathrm{d}}
\newcommand{\dist}{\ensuremath{\operatorname{dist}}}
\newcommand{\id}{\ensuremath{\operatorname{id}}}
\newcommand{\whole}[1]{\ensuremath\left\lfloor #1 \right\rfloor}
\newcommand{\Mu}{{\mathcal M}_{u,p}}
\newcommand{\Me}{{\mathcal M}_{u_1,p_1}}
\newcommand{\Mz}{{\mathcal M}_{u_2,p_2}}
\newcommand{\Mp}{\ensuremath{{\mathcal M}_{\varphi,p}}}
\newcommand{\mmbet}{\ensuremath{{m}^{2^{j\nd}}_{u_1,p_1}}}  
\newcommand{\mpb}{\ensuremath{m^{2^{j\nd}}_{\varphi,p}}} 
\newcommand{\mpbet}{\ensuremath{{m}^{2^{j\nd}}_{\varphi_1,p_1}}}  
\newcommand{\mpbzt}{\ensuremath{{m}^{2^{j\nd}}_{\varphi_2,p_2}}}  
\newcommand{\ms}{\ensuremath{{ m}_{u,p}}}
\newcommand{\mse}{\ensuremath{{ m}_{u_1,p_1}}}
\newcommand{\msz}{\ensuremath{{ m}_{u_2,p_2}}}
\newcommand{\mps}{\ensuremath{{ m}_{\varphi,p}}}
\newcommand{\mpse}{\ensuremath{{ m}_{\varphi_1,p_1}}}
\newcommand{\mpsz}{\ensuremath{{ m}_{\varphi_2,p_2}}}
\newcommand{\rphi}[1]{\ensuremath{\mathsf{r}_{#1}}}
\title{Generalised Morrey sequence spaces}
\author{Dorothee D. Haroske and Leszek Skrzypczak}
\begin{document}
\maketitle

\begin{abstract}
Generalised Morrey (function) spaces  enjoyed some  interest recently and found applications to PDE. Here we turn our attention to their discrete counterparts. We define  generalised Morrey sequence spaces $m_{\varphi,p}=m_{\varphi,p}(\mathbb{Z}^d)$. They are natural generalisations of the classical Morrey sequence  spaces $m_{u,p}$, $0<p\le u<\infty$,  which  were studied earlier.   
We consider some basic features   of the spaces as well as   embedding properties such as continuity, compactness and strict singularity.\\

\emph{Keywords: Morrey sequence spaces, continuous embeddings, strict singularity} \\

2000 MSC: 46E35,  46A45, 46B45

\end{abstract}

\maketitle

\section{Introduction}\label{intro}
     Morrey  spaces and, in particular, generalised Morrey spaces  
	were studied in recent years quite intensively and systematically. Originally, Morrey spaces were introduced in \cite{Mor38}, when studying solutions of second order quasi-linear elliptic equations in the framework of Lebesgue spaces.  They can be understood as a complement of the Lebesgue spaces $L_p(\Rn)$.  Morrey spaces describe the local behavior of the $L_p$ norm, which makes them useful in  harmonic analysis,  potential analysis and PDE, e.g. when describing the local behavior of solutions of nonlinear partial differential equations, cf. e.g. \cite{KY94,Tay92}.  We refer to the recent  monograph 
	\cite{FHS-MS-2} for details and further  references.  Apart from the  Morrey spaces also the smoothness spaces of Besov and Triebel-Lizorkin type related to Morrey spaces  were introduced and  studied intensively, cf. e.g. \cite{Maz03b,SYY10,Tri13}. In particular,  decomposition methods like atomic or wavelet characterisations, that require suitably adapted sequence spaces, were used for the spaces, cf. \cite{Saw2,ST2}.     
	
	Also the discrete version of Morrey spaces attracted some attention.  To the best of our knowledge  the first paper devoted to Morrey sequence spaces was \cite{GuKiSc}. The authors study spaces $m_{u,p}(\Z)$ of the sequences indexed by integers. As  natural generalisation  we introduced in \cite{HaSk_Mo_seq} the Morrey sequence spaces $m_{u,p}(\mathbb{Z}^d)$, $0<p\le u<\infty$,  on $\mathbb{Z}^d$, $d\ge 1$, that is, we studied sequences indexed by the set of all lattice points in $\Rn$ having integer components.   The way how sequences are indexed matters since Morrey spaces, are not rearrangement invariant. The sequence spaces can be considered as a generalisation of  $\ell_p=\ell_p(\mathbb{Z}^d)$ spaces since $m_{p,p}(\mathbb{Z}^d) = \ell_p(\mathbb{Z}^d)$.  Geometrical features  of the  spaces and properties of operators acting on them were later studied by different authors, cf. \cite{GHI,GHIN,GKSS,Gu-Schwa,G-P,KS}.  In particular, in \cite{HaSk_Mo_seq} we describe the properties of embeddings of the spaces and prove the Morrey spaces version of the classical Pitt theorem. 
	
	The aim of this paper is to give the analogous discrete version of generalised Morrey function spaces $\mathcal{M}_{\varphi,p}(\R^d)$. The definition of these spaces goes back to \cite{Nak94}. We refer to \cite{NNS16}, \cite{Saw18} and the second volume of  \cite{FHS-MS-2}. The wavelet characterisation of the corresponding smoothness spaces were proved in \cite{HMS} and \cite{HLMS}. The generalised Morrey sequence spaces $m_{\varphi,p}(\Zn)$ are introduced in Section~\ref{sec-1}. If the function $\varphi(t)\sim t^{d/u}$, $u\geq p$, $t>0$, then the space  $m_{\varphi,p}(\Zn)$ coincides with $m_{u,p}(\Zn)$.  The basic properties of the sequence spaces are proved in this section. Section~\ref{sec-2} is devoted to the properties of embeddings of the spaces. We find the sufficient and necessary conditions for the continuity of the embeddings. In Section~\ref{sec-3} the norms of the embeddings of finite-dimensional generalised Morrey sequence spaces are calculated. The embeddings of the infinite-dimensional spaces are never compact, therefore in the concluding Section~\ref{sec-4} we study the weaker operator property, namely the strict singularity. Once more the sufficient and necessary conditions are proved.

\section{Morrey sequence spaces}\label{sec-1}
\subsection{Preliminaries}

First we fix some notation. By $\nat$ we denote the \emph{set of natural numbers}, by $\no$ the set $\nat \cup \{0\}$,  and by $\Zn$ the \emph{set of all lattice points in $\Rn$ having integer components}.
For $a\in\real$, let   $\whole{a}:=\max\{k\in\Z: k\leq a\}$ and $a_+:=\max\{a,0\}$.
All unimportant positive constants will be denoted by $C$, occasionally with
subscripts. By the notation $A \lesssim B$, we mean that there exists a positive constant $C$ such that
$A \le C \,B$, whereas  the symbol $A \sim B$ stands for $A \ls B \ls A$.
We denote by $|\cdot|$ the Lebesgue measure when applied to measurable subsets of $\Rn$. Given two (quasi-)Banach spaces $X$ and $Y$, we write $X\hookrightarrow Y$
if $X\subset Y$ and the natural embedding of $X$ into $Y$ is continuous.

For each cube $Q\subset\rd$, we denote by $\ell(Q)$ the side length of $Q$ and  $|Q|=\ell(Q)^d$ its volume.  For $x\in\rd$ and $r\in(0,\infty)$, we denote by $Q(x,r)$ the compact cube centred at $x$ with side length $r$, whose sides are parallel to the axes of coordinates. Let $\mathcal{Q}$ denote the set of all dyadic cubes in $\rd$, namely, $\mathcal{Q}:=\{Q_{j,k}:=2^{-j}([0,1)^d+k):j\in\mathbb{Z}, k\in\zd\}$.

For $0<p<\infty$ we denote by $\ell_p=\ell_p(\Zn)$, 
\begin{align*} 
\ell_p(\Zn) = \Big\{\lambda=  \{\lambda_k\}_{k\in \Zn}\subset \C: \|\lambda| \ell_p \| = \Big(\sum_{k\in\Zn} |\lambda_k|^p\Big)^\frac{1}{p} < \infty \Big\}, 
\end{align*} 
complemented by
\begin{align*} 
\ell_\infty(\Zn) = \Big\{\lambda=  \{\lambda_k\}_{k\in \Zn}\subset \C: \|\lambda| \ell_\infty \| = \sup_{k\in\Zn} |\lambda_k| < \infty \Big\}. 
\end{align*}
Finally, we adopt the custom to denote by $c=c(\Zn)$, $c_0=c_0(\Zn)$, and $c_{00}=c_{00}(\Zn)$ the corresponding subspaces of $\ell_\infty(\Zn)$ of convergent, null and finite sequences, respectively, that is
\begin{align*}
c & = \{ \lambda= \{\lambda_k\}_{k\in \Zn}\in\ell_\infty: \ \exists\ \mu\in\C: |\lambda_k-\mu|\xrightarrow[|k|\to\infty]{} 0\Big\},\\
c_0 & = \{ \lambda= \{\lambda_k\}_{k\in \Zn}\in\ell_\infty: \ |\lambda_k|\xrightarrow[|k|\to\infty]{} 0\Big\},\\
c_{00} & = \{ \lambda= \{\lambda_k\}_{k\in \Zn}\in\ell_\infty: \ \exists\ r_0\in\no: \lambda_k=0\quad\text{for}\quad |k|>r_0\Big\}.
\end{align*}

For later use, let us also recall the definition of Lorentz sequence spaces $\ell_{p,q}=\ell_{p,q}(\Zn)$, $0<p<\infty$, $0<q\leq\infty$, which contain all sequences $\lambda=\{\lambda_k\}_{k\in\Zn}\in c_0$, such that for $0<q<\infty$, 
\begin{align}\label{lorentz-1} 
  \ell_{p,q}(\Zn) = \Big\{\lambda=  \{\lambda_k\}_{k\in \Zn}\subset \C: \|\lambda| \ell_{p,q} \| = \Big(\sum_{\nu\in\nat} \nu^{\frac{q}{p}-1} (\lambda^*_\nu)^q\Big)^{\frac1q} < \infty \Big\}, 
\end{align}
complemented by 
\begin{align}\label{lorentz-2} 
\ell_{p,\infty}(\Zn) = \Big\{\lambda=  \{\lambda_k\}_{k\in \Zn}\subset \C: \|\lambda| \ell_{p,\infty} \| = \sup_{\nu\in\nat} \nu^{1/p} \lambda^*_\nu < \infty \Big\}. 
\end{align}
Here the non-increasing rearrangement $\{\lambda^*_\nu\}_{\nu\in\nat}$ of a null sequence $\lambda=\{\lambda_k\}_{k\in\Zn}$ is defined as follows:
\begin{equation*}
  \lambda^*_\nu = \inf\{t\geq 0: \mu(\lambda,t)\leq \nu-1\} \quad\text{with}\quad \mu(\lambda,t) = \#\{m\in\Zn: |\lambda_m|>t\}, 
\end{equation*}
for $\nu\in\nat$. Then $\lambda_1^* \geq \lambda_2^*\geq \dots\geq 0$ and $\lambda_1^*=\|\lambda|\ell_\infty\|$. The spaces $\ell_{p,q}$ are well-known to be (quasi-)Banach spaces, and satisfy $\ell_p = \ell_{p,p}$ and $\ell_{p,q}\hra \ell_{p,r}$ when $r\geq q$, cf. \cite{BR,CS,LT1}.\\

As we mostly deal with sequence spaces on $\Zn$ we shall often omit it from their notation, for convenience.

\subsection{The background: Morrey function spaces}
In this paper we consider generalised Morrey sequence spaces $\mps$ which on the one hand generalise the Morrey function spaces $\ms$ studied in \cite{HaSk_Mo_seq} in some detail, and which on the other hand can be seen as the natural counterpart to the Morrey function spaces $\Mp$ which are generalisations of the (classical) Morrey spaces $\Mu$ themselves. In both cases the parameter $u$ is replaced by a function $\varphi$ according to the following definition. 

Recall first  that the classical  Morrey space
${\mathcal M}_{u,p}(\rd)$, $0<p\leq u<\infty $, is defined to be the set of all
  locally $p$-integrable functions $f\in L_p^{\mathrm{loc}}(\rd)$  such that
$$
\|f \mid {\mathcal M}_{u,p}(\rd)\| :=\, \sup_{x\in\rd, r>0} |Q(x,r)|^{\frac{1}{u}}
\left( {\frac{1}{|Q(x,r)|}}\int_{Q(x,r)} |f(y)|^p \dint y \right)^{\frac{1}{p}}\, <\, \infty\, .
$$

\begin{definition}\label{def-Mp}
Let $0<p<\infty$ and $\varphi:(0,\infty)\rightarrow [0,\infty)$ be a function which does not satisfy $\varphi\equiv 0$. Then $\Mp(\rd)$ is the set of all 
locally $p$-integrable functions $f\in L_p^{\mathrm{loc}}(\rd)$ for which 
\begin{equation} \label{Morrey-norm}
\|f \mid \Mp(\rd)\| :=\, \sup_{x\in\rd, r>0} \varphi (r)
\left( {\frac{1}{|Q(x,r)|}}\int_{Q(x,r)} |f(y)|^p \dint y \right)^{\frac{1}{p}}\, <\, \infty\, .
\end{equation}
\end{definition}

\begin{remark} \label{rmk1}
The above definition goes back to \cite{Nak94}.
When $\varphi(t)=t^{\frac{\nd}{u}}$ for $t>0$ and $0<p\leq u<\infty$, then $\Mp(\rd)$ coincides with $\Mu(\rd)$, which in turn recovers the Lebesgue space $L_p(\rd)$ when $u=p$. 
In the definition of $\|\cdot\mid \Mp(\rd)\|$ balls or cubes with sides parallel to the axes of coordinates can be taken. This  change leads to  equivalent quasi-norms. Note that for $\varphi_0\equiv 1$ (which would correspond to $u=\infty$) we obtain 
\begin{equation}\label{M1p}
{\mathcal M}_{\varphi_0,p}(\rd) = L_\infty(\rd),\quad 0<p<\infty,\quad \varphi_0\equiv 1,
\end{equation}
due to Lebesgue's differentiation theorem.  

When $\varphi(t)=t^{-\sigma} \chi_{(0,1)}(t)$ where  $-\frac{\nd}{p}\leq \sigma <0$, then $\Mp(\rd)$ coincides with the local Morrey spaces $\mathcal{L}^{\sigma}_p(\rd)$ introduced by Triebel in \cite[Section~1.3.4]{Tri13}. If $ \sigma=-\frac{\nd}{p}$, then the space is a uniform Lebesgue space $\mathcal{L}_p(\rd)$.
\end{remark}

\bigskip 

For $\Mp(\rd)$ it is usually required that $\varphi\in \Gp$, where $ \Gp$ 
is the set of all nondecreasing functions $\varphi:(0,\infty)\rightarrow (0,\infty)$ such that $\varphi(t)t^{-\nd/p}$ is a nonincreasing function, i.e.,
\begin{equation}\label{Gp-def}
1\leq \frac{\varphi(r)}{\varphi(t)}\leq \left(\frac{r}{t}\right)^{\nd/p}, \quad 0<t\leq r<\infty.
\end{equation}
A justification for the use of the class $\Gp$ comes from Lemma~\ref{M-not-triv} below, cf. e.g. \cite[Lem. 2.2]{NNS16}.  One can easily check that $\mathcal{G}_{p_2}\subset \mathcal{G}_{p_1}$ if $0<p_1\le p_2<\infty$.

 \begin{remark}
   Note that $\varphi\in\Gp$ enjoys a doubling property, i.e., $\varphi(r)\leq \varphi(2r)\leq 2^\frac{d}{p}\varphi(r)$, $0<r<\infty$, {therefore} we can define an equivalent quasi-norm in $\Mp(\rd)$ by taking the supremum over the collection of all dyadic cubes, namely,
    \begin{align}\label{Mf-norm}
        \|f \mid \Mp(\rd)\| :=\sup_{P\in \mathcal{Q}} \varphi(\ell(P))
             \left(\frac{1}{|P|}\int_{P} |f(y)|^p \dint y \right)^{\frac{1}{p}}.
    \end{align}
    In the sequel, we always assume that $\varphi\in\Gp$ and consider the quasi-norm \eqref{Mf-norm}.   This choice is natural and covers all interesting cases. Moreover, we shall usually assume in the sequel that $\varphi(1)=1$. 
This simplifies the notation, but does not limit the generality of considerations. Note that for a function $\varphi\in\Gp$ with $\varphi(1)=1$, then \eqref{Gp-def} implies that
\begin{align}\label{Gp-tlarge}
& 1\leq \varphi(t)\leq t^{\frac{\nd}{p}},\quad t\geq 1, \quad \text{in particular},\quad 
  \varphi(t) t^{-\frac{\nd}{p}} \leq 1\quad \text{for}\ t\to\infty,
  \intertext{and}\label{Gp-tsmall}
&  t^{\frac{\nd}{p}}\leq \varphi(t)\leq 1,\quad 0<t\leq 1.
\end{align}   
Together with the required monotonicity conditions for $\varphi\in\Gp$  this implies the existence of both limits
\begin{equation}\label{lim-exist}
  \lim_{t\to\infty} \varphi(t) t^{-\frac{\nd}{p}} \in [0,1] \qquad \text{and}\qquad \lim_{t\to 0^+} \varphi(t) \in [0,1].
  \end{equation}
Later it will turn out that the special cases when one or both limits equal $0$ play an important r\^ole. 
\end{remark}

\begin{lemma}[\cite{NNS16,Saw18}]\label{M-not-triv}
  Let $0<p<\infty$ and $\varphi:(0,\infty)\rightarrow [0,\infty)$ be a function satisfying $\varphi(t_0)\neq 0$ for some $t_0>0$.
  \begin{enumerate}[\upshape\bfseries (i)]
    \item
Then $\Mp(\rd)\neq \{0\}$ if, and only if, 
$$
\sup_{t>0} \varphi(t)  \min (t^{-\frac{\nd}{p}},1) < \infty.
$$
\item Assume \ $\displaystyle\sup_{t>0} \varphi(t)  \min (t^{-\frac{\nd}{p}},1) < \infty$. Then there exists $\varphi^*\in\Gp$ such that
$$
\Mp(\rd)={\mathcal M}_{\varphi^*,p}(\rd)
$$
in the sense of equivalent (quasi-)norms.
 \end{enumerate}
\end{lemma}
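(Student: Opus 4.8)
The plan is to handle the two assertions separately: the nontriviality criterion (i) as a warm‑up, and the regularisation (ii) as the substantial part. Throughout write $M:=\sup_{t>0}\varphi(t)\min(t^{-\nd/p},1)$. For the sufficiency in (i) I would simply test with the characteristic function $f=\chi_{Q(0,1)}$ of the unit cube. Since $\int_{Q(x,r)}|f|^p\dint y=|Q(x,r)\cap Q(0,1)|\le\min(r^\nd,1)$, the average is at most $\min(1,r^{-\nd})$, hence $\varphi(r)\big(\frac{1}{|Q(x,r)|}\int_{Q(x,r)}|f|^p\big)^{1/p}\le\varphi(r)\min(1,r^{-\nd/p})\le M$, so that $0\neq f\in\Mp(\rd)$ whenever $M<\infty$. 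For the necessity I would argue contrapositively from any $0\neq f\in\Mp(\rd)$ with $A:=\|f\mid\Mp(\rd)\|$: by the Lebesgue differentiation theorem pick a Lebesgue point $x_1$ of $|f|^p$ with $a:=|f(x_1)|^p>0$ and a radius $r_1\in(0,1]$ so that the average of $|f|^p$ over $Q(x_1,r)$ exceeds $a/2$ for all $0<r\le r_1$. Testing the norm on these concentric cubes gives $\varphi(r)\le A(a/2)^{-1/p}$ for $r\le r_1$, while with $c_1:=\int_{Q(x_1,r_1)}|f|^p>0$ and the monotonicity $\int_{Q(x_1,r)}|f|^p\ge c_1$ for $r\ge r_1$ one gets $\varphi(r)r^{-\nd/p}\le Ac_1^{-1/p}$ for $r\ge r_1$. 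Splitting the supremum in $M$ over $(0,r_1]$, $(r_1,1]$ and $(1,\infty)$ and combining these two bounds yields $M<\infty$.

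For part (ii) the key idea is to let $\varphi^*$ be the smallest majorant of $\varphi$ inside $\Gp$, namely
$$\varphi^*(t):=\sup_{s>0}\varphi(s)\,\min\{1,(t/s)^{\nd/p}\}.$$
I would first verify $\varphi^*\in\Gp$: the factor $\min\{1,(t/s)^{\nd/p}\}$ is nondecreasing in $t$, so $\varphi^*$ is nondecreasing; and since $\varphi^*(t)t^{-\nd/p}=\sup_{s>0}\varphi(s)\min\{t^{-\nd/p},s^{-\nd/p}\}$, this quantity is nonincreasing in $t$. Positivity is immediate from $\varphi(t_0)>0$ (take $s=t_0$), and finiteness of $\varphi^*(t)$ follows from a short case analysis of the defining supremum against the hypothesis $M<\infty$.

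The heart of the matter is the equivalence of quasi‑norms. One direction is free: $\varphi^*\ge\varphi$ (take $s=t$), so $\|f\mid\Mp(\rd)\|\le\|f\mid{\mathcal M}_{\varphi^*,p}(\rd)\|$. For the reverse I would unfold the supremum defining $\varphi^*$, reducing everything to the pointwise claim that, for all $x,t,s$,
$$\varphi(s)\,\min\{1,(t/s)^{\nd/p}\}\Big(\frac{1}{|Q(x,t)|}\int_{Q(x,t)}|f|^p\Big)^{1/p}\ls\|f\mid\Mp(\rd)\|.$$
When $t<s$ this is elementary, since $Q(x,t)\subset Q(x,s)$ and the factor $(t/s)^{\nd/p}$ exactly compensates the change of normalisation; no constant is lost. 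The genuinely nontrivial case is $t\ge s$, where I would cover $Q(x,t)$ by $\sim(t/s)^\nd$ cubes of side $s$, bound $\int_{Q(x,t)}|f|^p$ by the sum of the corresponding integrals (subadditivity of the integral of $|f|^p$, valid for every $0<p<\infty$), and apply the Morrey bound on each piece. This covering‑and‑summing step, together with the bookkeeping of the resulting multiplicative constant, is where I expect the main difficulty to lie, and it is precisely the source of the constant that forces equivalence of quasi‑norms rather than equality.
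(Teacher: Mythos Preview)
Your argument is correct. Note, however, that the paper does not actually prove Lemma~\ref{M-not-triv}: it is quoted from \cite{NNS16,Saw18} without proof. What the paper does prove are the discrete analogues, Lemmas~\ref{m-not-triv} and~\ref{m-equiv-Gp}, explicitly ``adapting the arguments'' of the continuous case, so those are the natural point of comparison.

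For part~(i) your approach matches the paper's discrete proof: both test with a characteristic function (resp.\ sequence) for sufficiency, and both extract boundedness of $\varphi(t)t^{-\nd/p}$ at large scales from any nonzero element for necessity. Your appeal to the Lebesgue differentiation theorem to control small $t$ is genuinely continuous and has no discrete counterpart (there the smallest scale is $j=0$). For part~(ii) the routes diverge. You build $\varphi^*$ in one shot as the least $\Gp$-majorant of $\varphi$ and prove the nontrivial inequality by covering $Q(x,t)$ with $\sim(t/s)^{\nd}$ cubes of side $s$ when $t\ge s$, incurring a dimensional constant. The paper's discrete proof instead proceeds in two steps: first replace $\varphi$ by the nondecreasing \emph{minorant} $\varphi_1(2^k)=\inf_{j\ge k}\varphi(2^j)$ and show $m_{\varphi_1,p}=\mps$ with \emph{equality} of norms, using that the average over a cube is dominated by the supremum of averages over its dyadic subcubes; then pass to $\tilde\varphi(2^k)=2^{k\nd/p}\sup_{j\ge k}\varphi_1(2^j)2^{-j\nd/p}$ via an $\varepsilon$-argument, again with equality of norms. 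Your construction is shorter and more transparent; the paper's two-step route buys exact equality of norms rather than mere equivalence.
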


  \begin{remark}\label{R-emb-Mphi}
    In \cite[Thm.~3.3]{GHLM17} it is shown that for $1\leq p_2\leq p_1<\infty$, $\varphi_i\in \mathcal{G}_{p_i}$, $i=1,2$, then
    \begin{equation} \label{emb-Mphi}
      {\mathcal M}_{\varphi_1, p_1}(\rd) \hookrightarrow {\mathcal M}_{\varphi_2, p_2}(\rd)
      \end{equation}
    if, and only if, there exists some $C>0$ such that for all $t>0$, $\varphi_1(t)\geq C \varphi_2(t)$. 
   The argument can be immediately extended to $0<p_2\leq p_1<\infty$, cf. \cite[Cor.~30]{FHS-MS-2}.
   
 In case of $\varphi_i(t) =t^{\nd/u_i}$, $0<p_i\leq u_i<\infty$, $i=1,2$, it is well-known that
\begin{equation}\label{emb-Mu}
\Me(\rd) \hookrightarrow \Mz(\rd)\qquad\text{if, and only if,}\qquad
p_2\leq p_1\leq u_1=u_2, 
\end{equation}
cf. \cite{piccinini-1} and \cite{rosenthal}. This can be observed from \eqref{emb-Mphi} { since} 
\[
\varphi_1(t)\geq C \varphi_2(t) \quad\text{{means}}\quad t^{\frac{\nd}{u_1}} \geq C t^{\frac{\nd}{u_2}},\quad t>0,
\]
which results in $u_1=u_2$.

Let $p\in (0,\infty)$, $\varphi\in \Gp$, and $0<r\leq p$. Then \eqref{emb-Mphi} applied to $\varphi_1=\varphi$, $\varphi_2(t)=t^{\nd/r}$ results in
\begin{equation}
  \Mp(\rd)\hookrightarrow L_r(\rd) \quad\text{if, and only if,}\quad \varphi(t) \geq C t^{\frac{\nd}{r}},\quad t>0.
  \end{equation}
In connection with \eqref{Gp-tlarge} and \eqref{Gp-tsmall} this leads to $r=p$, that is,
\begin{equation}\label{Mphi-Lr}
 \Mp(\rd)\hookrightarrow L_r(\rd) \quad\text{if, and only if,}\quad r=p \quad \text{and}\quad \varphi(t) \sim t^{\nd/p},\quad t>1.
\end{equation}

Finally, if there exists some $u\geq p$ such that $\varphi(t) \leq c t^{\nd/u}$, $t>0$, then
\begin{equation}\label{emb-Mu-Mphi}
  \mathcal{M}_{u,r}(\rd)\hookrightarrow \Mp(\rd)\quad \text{for all $r$ with $p\leq r\leq u$}.
\end{equation}

  \end{remark}

 \begin{remark}\label{emb-Linf}
   Recall Sawano's observation, cf. \cite{Saw18},  that {if} $\ \inf_{t>0} \varphi(t)>0$, then $\mathcal{M}_{\varphi,p}(\rd)\hookrightarrow L_\infty(\rd)$, while 
$ \sup_{t>0} \varphi(t)<\infty $ implies $ L_\infty(\rd)\hookrightarrow\mathcal{M}_{\varphi,p}(\rd)$, leading as a special case to \eqref{M1p}.
 In particular, when $\varphi\in\Gp$, $0<p<\infty$, then according to \cite[Thms. 27, 28]{FHS-MS-2},
     \begin{equation}\label{emb-Linf-sharp}
       \Mp(\rd) \hookrightarrow L_\infty(\rd) \quad\text{if, and only if,}\quad 
       \inf_{t>0} \varphi(t)>0,
  \end{equation}
and
     \begin{equation}\label{Linf-emb-sharp}
       L_\infty(\rd) \hookrightarrow \Mp(\rd) \quad\text{if, and only if,}\quad 
       \sup_{t>0} \varphi(t)<\infty.
  \end{equation}
  \end{remark}

We consider the following examples.

\begin{examples}\label{exm-Gp}
\begin{enumerate}[\upshape\bfseries (i)]
\item The function 
\begin{equation}\label{example1} \varphi_{u,v}(t)=
\begin{cases}
t^{\nd/u} & \text{if}\qquad t\le 1,\\ 
t^{\nd/v} & \text{if}\qquad t >1, 
\end{cases}
\end{equation}
with $0<u,v<\infty$ belongs to $\Gp$ with $p=\min(u,v)$. In particular, taking $u=v$, the
function $\varphi(t)=t^{\frac{\nd}{u}}$ belongs to $\mathcal{G}_p$ whenever  $0<p\leq u<\infty$. 
\item The function  $\varphi(t)=\max(t^{\nd/v},1)$ belongs to $\mathcal{G}_v$. It corresponds to \eqref{example1} with $u=\infty$.
\item The function  $\varphi(t)=\sup\{s^{\nd/u}\chi_{(0,1)}(s): s\le t\}= \min(t^{\nd/u},1)$ defines an equivalent (quasi)-norm in $\mathcal{L}^{\sigma}_p(\rd)$, $\sigma= -\frac{\nd}{u}$, $p\le u$. The function $\varphi$  belongs to $\mathcal{G}_u\subset \mathcal{G}_p$. It corresponds to \eqref{example1} with $v=\infty$. 
\item  The   function $\varphi(t)=t^{\nd/u}(\log (L+t))^a$, with $L$ being a sufficiently large constant, belongs to $\mathcal{G}_u$ if  $0< u<\infty$ and $a\leq 0$. 
\end{enumerate}
\smallskip
Other examples   can be found, for instance, in \cite[Ex.~3.15]{Saw18}.
\end{examples}

\subsection{Morrey sequence spaces: definition and basic properties}
In \cite{GuKiSc} different versions of Morrey sequence spaces were introduced, essentially on ${\N}$ or $\Z$, respectively, including the counterparts $\ms$ and $\mps$ of the Morrey function spaces $\Mu(\Rn)$ and $\Mp(\Rn)$, respectively. The authors in \cite{GuKiSc} restrict their attention to the Banach case, i.e., $1\leq p\leq u<\infty$, and $\varphi\in\Gp$. In our paper \cite{HaSk_Mo_seq} we extended the definition of $\ms$ to the general $\nd$-dimensional case and $0<p\leq\infty$, and studied embeddings, duality questions and further properties in greater detail. This is our intention now as well. Let us first recall the definition of the spaces $\ms$, cf. \cite{HaSk_Mo_seq}. Let $0<p\le u < \infty$. Then $\ms=\ms(\Zn)$ is defined by
	\begin{align} 
	\ms(\Zn) = & \Big\{\lambda= \{\lambda_k\}_{k\in \Zn}\subset \C:  \nonumber\\
	&              \|\lambda| \ms \| = \sup_{j\in\no; m\in \Zn}\frac{|Q_{-j,m}|^{1/u}}{ |Q_{-j,m}|^{1/p} } \Big(\sum_{k:\, Q_{0,k}\subset Q_{-j,m}} \!\!\!|\lambda_k|^p\Big)^\frac{1}{p} < \infty \Big\}. 
	\end{align} 

Parallel to the generalisation of $\Mu(\Rn)$ to $\Mp(\Rn)$ we now introduce the modified version of $\ms(\Zn)$. In view of \eqref{Mf-norm} only values of $\varphi$ in points $2^j$, $j\in\No$, are of interest now. Let $\mathbb{D}=\{2^j: j\in \N_0\}$.
        
\begin{definition} \label{D-mps}
Let $0<p<\infty$ and $\varphi: {\mathbb{D}} \rightarrow [0,\infty)$ be a function which does not satisfy $\varphi\equiv 0$. We define $\mps=\mps(\Zn)$ by
	\begin{align} 
	\mps(\Zn) = & \Big\{\lambda=  \{\lambda_k\}_{k\in \Zn}\subset \C:  \nonumber\\
	&              \|\lambda| \mps \| = \sup_{j\in\no; m\in \Zn} \frac{\varphi(\ell(Q_{-j,m}))}{ |Q_{-j,m}|^{1/p} } \Big( \sum_{k:\, Q_{0,k}\subset Q_{-j,m}} \!\!\!|\lambda_k|^p\Big)^\frac{1}{p} < \infty \Big\}. 
	\end{align} 
\end{definition}

\begin{remark}
  In \cite{GuKiSc} the corresponding one-dimensional counterpart was introduced and studied. Obviously, with $\varphi(t)= t^{d/u}$, recall Example~\ref{exm-Gp}(i), we regain $\ms(\Zn)$.
  
Moreover, Definition~\ref{D-mps} gives the discrete counterpart of $\Mp(\Rn)$ in view of \eqref{Morrey-norm}. More precisely, given some sequence $\lambda=\{\lambda_k\}_{k\in \Zn} \in \mps$, then
		\[
		f_\lambda = \sum_{k\in\Zn} \lambda_k \chi_{Q_{0,k}} \in \Mp,
		\]
		where $\chi_A$ denotes the characteristic function of a set $A\subset\Rn$, as usual. 
		
		Conversely, let $f_0\in\Mp$ and define $\lambda^0 = \{\lambda_k^0\}_{k\in\Zn}$ by
		\[\lambda_k^0 = \Big(\int_{Q_{0,k}} |f_0(y)|^p\dint y\Big)^{1/p}, \quad k\in\Zn.\] Then  $\|\lambda^0|\mps\| \leq \|f_0|\Mp\|$. This can be verified immediately. Hence there is some obvious correspondence between the generalised Morrey sequence and function spaces, respectively.

\end{remark}

We shall list a number of more or less elementary properties, and then investigate, which properties of $\varphi$ imply  the spaces $\mps$ to be non-trivial. This can be seen as the counterpart of Lemma~\ref{M-not-triv}.

As a preparation we study the $\mps$-norm of a `characteristic' sequence. This extends \cite[Lemma~4.2]{GuKiSc}.

\begin{lemma}\label{char-seq}
  Let $k_0\in\no$ and $m_0\in\Zn$ and consider the sequence $\lambda^0 = \{\lambda^0_m\}_{m\in\Zn}$ with
  \[
    \lambda_m^0 = \begin{cases} 1, & \text{if}\quad Q_{0,m} \subset Q_{-k_0, m_0}, \\ 0, & \text{otherwise}.\end{cases} 
  \]
  Assume that
  \begin{equation}\label{ddh-4}
\sup_{j\geq k_0} \varphi(2^j) 2^{(k_0-j)\frac{\nd}{p}} < \infty.
    \end{equation}
  Then $\lambda^0 \in \mps$ with
  \begin{equation}\label{norm-l0}
    \|\lambda^0 | \mps\| = \sup_{j\in \no}  \varphi(2^j) \min(2^{(k_0-j)\frac{\nd}{p}},1).
    \end{equation} 
  \end{lemma}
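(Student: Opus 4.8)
The plan is to compute the supremum defining $\|\lambda^0 \mid \mps\|$ directly from Definition~\ref{D-mps}, by analysing how the dyadic sum $\sum_{k:\, Q_{0,k}\subset Q_{-j,m}} |\lambda^0_k|^p$ behaves as the cube $Q_{-j,m}$ ranges over all $j\in\no$ and $m\in\Zn$. Since $\lambda^0_k=1$ exactly on those unit cubes $Q_{0,k}$ contained in the fixed cube $Q_{-k_0,m_0}$, the quantity $\sum_{k:\,Q_{0,k}\subset Q_{-j,m}}|\lambda^0_k|^p$ simply counts the number of unit cubes lying in the intersection $Q_{-j,m}\cap Q_{-k_0,m_0}$. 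Because the cubes involved are dyadic, any two of them are either nested or disjoint, so the intersection is itself a dyadic cube (or empty), and the count is governed by $\min(j,k_0)$.

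First I would separate the two cases according to the relative sizes of the test cube $Q_{-j,m}$ and the given cube $Q_{-k_0,m_0}$. When $j\le k_0$, the largest possible contribution comes from choosing $Q_{-j,m}\subset Q_{-k_0,m_0}$, which contains exactly $2^{j\nd}$ unit cubes, all with $\lambda^0_k=1$; this gives a factor $\varphi(2^j)\,|Q_{-j,m}|^{-1/p}\,(2^{j\nd})^{1/p}=\varphi(2^j)$, using $|Q_{-j,m}|=2^{j\nd}$ and $\ell(Q_{-j,m})=2^j$. When $j\ge k_0$, the optimal choice is $Q_{-j,m}\supset Q_{-k_0,m_0}$, so the whole of $Q_{-k_0,m_0}$ contributes its $2^{k_0\nd}$ unit cubes, yielding $\varphi(2^j)\,2^{-j\nd/p}\,(2^{k_0\nd})^{1/p}=\varphi(2^j)\,2^{(k_0-j)\nd/p}$. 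Combining both regimes, the expression inside the supremum is maximised, for each fixed $j$, by $\varphi(2^j)\min\!\big(2^{(k_0-j)\nd/p},1\big)$, and taking the supremum over $j\in\no$ produces precisely the right-hand side of \eqref{norm-l0}. The finiteness hypothesis \eqref{ddh-4} controls exactly the $j\ge k_0$ tail, and together with \eqref{Gp-tlarge} bounding the $j\le k_0$ part, it guarantees $\lambda^0\in\mps$.

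The one point requiring care is the claim that the choices described above are genuinely optimal among \emph{all} admissible $(j,m)$, not merely the nested ones. The key observation is that for fixed scale $j$, translating $m$ so that $Q_{-j,m}$ overlaps $Q_{-k_0,m_0}$ only partially can never increase the count of contained unit cubes beyond the nested configuration: when $j\le k_0$ the best is to sit entirely inside and collect all $2^{j\nd}$ unit cubes, while when $j\ge k_0$ the best is to engulf $Q_{-k_0,m_0}$ entirely and collect all $2^{k_0\nd}$ of its cubes, any partial overlap giving strictly fewer. I expect this monotonicity argument for the count under the dyadic nesting structure to be the main (though still elementary) obstacle, since it is where one must verify that no misaligned position beats the aligned one; everything else is bookkeeping with the volume and side-length factors.
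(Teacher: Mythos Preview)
Your approach is essentially the same as the paper's: split into $j\le k_0$ and $j\ge k_0$, use the dyadic nesting to identify the optimal $m$, and read off the two contributions $\varphi(2^j)$ and $\varphi(2^j)2^{(k_0-j)\nd/p}$.

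Two small corrections. First, you invoke \eqref{Gp-tlarge} to control the $j\le k_0$ part, but at this stage of the paper $\varphi\in\Gpd$ is \emph{not} assumed; the lemma only requires $\varphi:\mathbb{D}\to[0,\infty)$ together with \eqref{ddh-4}. The correct (and simpler) reason that $\sup_{0\le j\le k_0}\varphi(2^j)$ is finite is that it is a maximum over the finitely many values $\varphi(1),\dots,\varphi(2^{k_0})$. Second, your final paragraph worries about ``partial overlaps'', but you already observed in the second paragraph that dyadic cubes are either nested or disjoint, so no partial overlap can occur; that last paragraph is therefore addressing a non-issue and can be dropped.
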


  \begin{proof}
    In view of Definition~\ref{D-mps} we need to show that
      \begin{align*}
\|\lambda^0| \mps \| = \sup_{j\in\no} \varphi(2^j) 2^{-j\frac{\nd}{p}} \sup_{ m\in\Zn} |Q_{-j,m}\cap Q_{-k_0,m_0}|^{\frac1p} < \infty.
      \end{align*}
      Let us denote by
      \[
        \varphi^\ast(2^{k_0})=  \sup_{j\geq k_0} 2^{(k_0-j)\frac{\nd}{p}} \varphi(2^j), \qquad \varphi^+(2^{k_0}) = \sup_{j\leq k_0} \varphi(2^j).
      \]
By assumption \eqref{ddh-4}, $\varphi^\ast(2^{k_0})<\infty$, while $\varphi^+(2^{k_0}) = \max_{j=0, \dots, k_0} \varphi(2^j) = c_{k_0}$ is always finite. Thus for \eqref{norm-l0} it is sufficient to show that 
\begin{equation}\label{norm-l0-a}
  \|\lambda^0| \mps \| = \max\{ \varphi^\ast(2^{k_0}),\varphi^+(2^{k_0})\}.    
\end{equation}
      If $j\geq k_0$, then
      \begin{align*}
        \sup_{j\geq k_0} \varphi(2^j) 2^{-j\frac{\nd}{p}} \sup_{m\in\Zn} |Q_{-j,m}\cap Q_{-k_0,m_0}|^{\frac1p} & = \sup_{j\geq k_0} \varphi(2^j) 2^{-j\frac{\nd}{p}} |Q_{-k_0,m_0}|^{\frac1p} \\
        & = \sup_{j\geq k_0} \varphi(2^j) 2^{-(j-k_0)\frac{\nd}{p}} = \varphi^\ast(2^{k_0}),
        \end{align*}
        while for $j\leq k_0$,
        \begin{align*}
          \sup_{j\leq k_0} \varphi(2^j) 2^{-j\frac{\nd}{p}} \sup_{m\in\Zn} |Q_{-j,m}\cap Q_{-k_0,m_0}|^{\frac1p} & = \sup_{j\leq k_0} \varphi(2^j) 2^{-j\frac{\nd}{p}} 2^{j\frac{\nd}{p}}   = 
                                                                                                                   \varphi^+(2^{k_0}).
        \end{align*}
        This concludes the argument.
\end{proof}

Next we give the discrete counterpart of Lemma~\ref{M-not-triv}(i).
\begin{lemma}\label{m-not-triv}
	Let $0<p<\infty$ and $\varphi: {\mathbb{D}}\rightarrow [0,\infty)$ be a function satisfying $\varphi(2^{k_1})\neq 0$ for some $k_1\in\no$. 
	Then $\mps(\Zn)\neq \{0\}$ if, and only if, 
	\begin{equation} \label{ddh-3}
		\sup_{j\in\no} \varphi(2^j)  2^{-j\frac{\nd}{p}} < \infty.
	\end{equation}
\end{lemma}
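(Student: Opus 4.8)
The plan is to prove the two implications separately, exploiting the explicit form of the quasi-norm together with the computation already carried out in Lemma~\ref{char-seq}. Since $\ell(Q_{-j,m})=2^j$ and $|Q_{-j,m}|=2^{j\nd}$, the defining quasi-norm reads
\begin{equation*}
\|\lambda\mid\mps\| = \sup_{j\in\no;\, m\in\Zn} \varphi(2^j)\, 2^{-j\nd/p} \Big(\sum_{k:\,Q_{0,k}\subset Q_{-j,m}} |\lambda_k|^p\Big)^{1/p}.
\end{equation*}

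For the sufficiency I would take the simplest possible candidate, namely the single-point sequence $\delta_0$ supported at the origin; this is exactly the characteristic sequence of Lemma~\ref{char-seq} with $k_0=0$ and $m_0=0$. Under assumption \eqref{ddh-3}, condition \eqref{ddh-4} of that lemma holds with $k_0=0$, so the lemma applies and yields $\delta_0\in\mps$ with $\|\delta_0\mid\mps\| = \sup_{j\in\no}\varphi(2^j)\min(2^{-j\nd/p},1) = \sup_{j\in\no}\varphi(2^j)\,2^{-j\nd/p}<\infty$, where the last equality uses $2^{-j\nd/p}\le 1$ for $j\in\no$. Since $\varphi(2^{k_1})\neq 0$ for some $k_1\in\no$ and $\varphi$ takes values in $[0,\infty)$, the summand at $j=k_1$ is strictly positive, so this quantity is positive; hence $\delta_0$ is a non-zero element of $\mps$ and $\mps\neq\{0\}$.

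For the necessity I would argue by contraposition: assume \eqref{ddh-3} fails, i.e. $\sup_{j\in\no}\varphi(2^j)\,2^{-j\nd/p}=\infty$, and show that any sequence with a non-zero entry has infinite quasi-norm. Given $\lambda$ with $\lambda_{k^\ast}\neq 0$ for some $k^\ast\in\Zn$, for each $j\in\no$ select the (unique) dyadic cube $Q_{-j,m_j}\supset Q_{0,k^\ast}$ afforded by the nested dyadic structure. Restricting the supremum in the quasi-norm to these cubes, and discarding all but the single summand $k=k^\ast$ in the inner sum, gives
\begin{equation*}
\|\lambda\mid\mps\| \geq |\lambda_{k^\ast}|\, \sup_{j\in\no} \varphi(2^j)\,2^{-j\nd/p} = \infty .
\end{equation*}
Thus no non-zero sequence lies in $\mps$, i.e. $\mps=\{0\}$, which is the contrapositive of the claim.

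Neither direction presents a genuine obstacle: the computation in Lemma~\ref{char-seq} already does the work for sufficiency, and the necessity reduces to a one-line monotonicity estimate. The only point requiring a little care is the bookkeeping of which cubes $Q_{-j,m}$ contain a fixed $Q_{0,k^\ast}$, together with the observation that the corresponding inner sum always retains the term $|\lambda_{k^\ast}|^p$; once this is noted, taking the supremum over $j\in\no$ immediately forces the divergence.
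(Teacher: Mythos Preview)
Your proof is correct and follows essentially the same approach as the paper: sufficiency via Lemma~\ref{char-seq} with $k_0=0$, and necessity by locating a non-zero contribution inside arbitrarily large dyadic cubes so that the supremum blows up. Your necessity argument is in fact marginally slicker, since it uses a single entry $\lambda_{k^\ast}$ rather than a positive $\ell_p$-sum over a fixed cube, but the underlying idea is identical.
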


\begin{proof} We adapt the arguments in the proof of Lemma~\ref{M-not-triv}, cf. \cite{NNS16,Saw18}. 
	
	Assume first \eqref{ddh-3} to hold. Choose $k_0=0$ and $m_0\in\Zn$ arbitrary, then by Lemma~\ref{char-seq}, $\lambda^0\in\mps$ with
	\[
	0<\varphi(2^{k_1}) 2^{-k_1\frac{\nd}{p}} \leq \|\lambda^0| \mps\|  = \sup_{j\in\no} \varphi(2^j)  2^{-j\frac{\nd}{p}} < \infty
	\]
	by assumption. Thus $\mps\supsetneq \{0\}$.
	
	Now assume $\mps\neq\{0\}$, hence there exists some $\lambda\in\mps\setminus\{0\}$. This implies that there exists $j_0=j(\lambda)\in\no$ and $m_0=m_0(\lambda)\in\Zn$ such that
	\[
	\sum_{k: Q_{0,k}\subset Q_{-j_0, m_0}} |\lambda_k|^p = c_\lambda^p >0.
	\]
	We want to show \eqref{ddh-3} and proceed by contradiction: assume that there exists some strictly increasing sequence $\{j_l\}_{l\in\no}$ such that
	\[ \varphi(2^{j_l}) 2^{-j_l \frac{\nd}{p}} > 2^l,\quad l\in\nat. \]
	Then
	\begin{align*}
		\|\lambda|\mps\| & = \sup_{j\in\no, m\in\Zn} \varphi(2^j) 2^{-j\frac{\nd}{p}} \Big(\sum_{k: Q_{0,k}\subset Q_{-j, m}} |\lambda_k|^p\Big)^{\frac1p}\\
&		\geq \sup_{j\geq j_0} \varphi(2^j) 2^{-j\frac{\nd}{p}} \Big(\sum_{k: Q_{0,k}\subset Q_{-j_0, m_0}} |\lambda_k|^p\Big)^{\frac1p}\\
		&> c_\lambda\ \sup_{l\geq l_0} \varphi(2^{j_l})  2^{-j_l\frac{\nd}{p}} \ > c_\lambda \ \sup_{l\geq l_0} 2^l = \infty,
	\end{align*}
	which contradicts $\lambda\in\mps$ and concludes the argument.
\end{proof}


\begin{proposition}\label{Prop-1-mps}
  Let $0<p<\infty$ and $\varphi: {\mathbb{D}}\rightarrow [0,\infty)$ be a function satisfying $\varphi(2^{k_0})\neq 0$ for some $k_0\in\no$. 
\begin{enumerate}[\upshape\bfseries (i)]
	\item
	$\mps$ is a (quasi-) Banach space. 
	\item
	If $0<p_2\leq p_1<\infty$, then $m_{\varphi,p_1} \hookrightarrow m_{\varphi,p_2}$.
	\item
Let $\varphi_1, \varphi_2: {\mathbb{D}}\rightarrow [0,\infty)$. If  {there exists some $C>0$ such that $\varphi_1(2^j)\geq C \varphi_2(2^j)$, for any $j\in\no$,} then  $m_{\varphi_1,p} \hookrightarrow m_{\varphi_2,p}$.
	\item
	We have $\mps\hookrightarrow \ell_\infty$.
      \item
        If $\varphi$ is also bounded from above, that is, $\sup_{k\in\no}\varphi(2^k)<\infty$, then $\mps = \ell_\infty$.
      \end{enumerate}
\end{proposition}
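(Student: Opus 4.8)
The plan is to dispatch parts (iii)--(v) by elementary block estimates and to reserve the real work for the completeness in (i); I would prove (iv) first, since it is needed for both (i) and (v). For (iv) I fix $\lambda\in\mps$ and $m\in\Zn$ and test the defining supremum against the single dyadic block $Q_{-k_0,m'}\supset Q_{0,m}$ at a level $k_0\in\no$ with $\varphi(2^{k_0})\neq0$. Discarding all summands but the one at $k=m$ gives $\varphi(2^{k_0})2^{-k_0\nd/p}|\lambda_m|\leq\|\lambda|\mps\|$, whence $\|\lambda|\ell_\infty\|\leq 2^{k_0\nd/p}\varphi(2^{k_0})^{-1}\|\lambda|\mps\|$ (the constant being $1$ when $\varphi(1)=1$, using the level $j=0$). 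Part (iii) is then immediate from the definition: since $\varphi_2(2^j)\leq C^{-1}\varphi_1(2^j)$ for every $j\in\no$, the $\varphi_2$-expression on each block is at most $C^{-1}$ times the $\varphi_1$-expression, and the supremum inherits the bound $\|\lambda|m_{\varphi_2,p}\|\leq C^{-1}\|\lambda|m_{\varphi_1,p}\|$.

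The key structural fact behind (ii) and (v) is $|Q_{-j,m}|=2^{j\nd}=\#\{k:Q_{0,k}\subset Q_{-j,m}\}=:N$, so that the block factor equals the normalised power mean $\bigl(\frac1N\sum_k|\lambda_k|^p\bigr)^{1/p}$. For (ii) I invoke monotonicity of power means (Jensen's inequality for $t\mapsto t^{p_1/p_2}$ against the uniform weight $1/N$): when $p_2\leq p_1$ the $p_2$-mean is dominated by the $p_1$-mean on each block; multiplying by $\varphi(2^j)$ and taking the supremum over $(j,m)$ yields $\|\lambda|m_{\varphi,p_2}\|\leq\|\lambda|m_{\varphi,p_1}\|$, i.e.\ the embedding with norm at most $1$. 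For the nontrivial inclusion $\ell_\infty\hookrightarrow\mps$ in (v) I bound each block crudely: its $N$ summands are all at most $\|\lambda|\ell_\infty\|^p$, so the normalised mean is at most $\|\lambda|\ell_\infty\|$ and $\varphi(2^j)\bigl(\frac1N\sum_k|\lambda_k|^p\bigr)^{1/p}\leq\varphi(2^j)\|\lambda|\ell_\infty\|$; the supremum over $(j,m)$ together with $\sup_k\varphi(2^k)<\infty$ gives $\|\lambda|\mps\|\leq(\sup_k\varphi(2^k))\,\|\lambda|\ell_\infty\|$. Combined with (iv) this is $\mps=\ell_\infty$, the discrete analogue of Sawano's observation recalled in Remark~\ref{emb-Linf}.

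Part (i) is the main obstacle. Homogeneity of $\|\cdot|\mps\|$ is clear, and the (quasi-)triangle inequality follows by applying the $\ell_p$ (quasi-)triangle inequality on each finite block and then taking suprema (a genuine norm for $p\geq1$, a quasi-norm for $p<1$). For completeness I would take a Cauchy sequence $\{\lambda^{(n)}\}_n\subset\mps$; by (iv) it is Cauchy in $\ell_\infty$, hence coordinatewise convergent to some $\lambda\in\ell_\infty$. The decisive step is a Fatou-type passage to the limit performed one block at a time: for fixed $(j,m)$ the index set $\{k:Q_{0,k}\subset Q_{-j,m}\}$ is finite, so coordinatewise convergence yields $\varphi(2^j)2^{-j\nd/p}\bigl(\sum_k|\lambda^{(n)}_k-\lambda_k|^p\bigr)^{1/p}=\lim_{\ell}\varphi(2^j)2^{-j\nd/p}\bigl(\sum_k|\lambda^{(n)}_k-\lambda^{(\ell)}_k|^p\bigr)^{1/p}\leq\liminf_{\ell}\|\lambda^{(n)}-\lambda^{(\ell)}|\mps\|$. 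Only after this do I take the supremum over $(j,m)$, obtaining $\|\lambda^{(n)}-\lambda|\mps\|\leq\liminf_{\ell}\|\lambda^{(n)}-\lambda^{(\ell)}|\mps\|$, which tends to $0$ as $n\to\infty$ by the Cauchy property; in particular $\lambda\in\mps$ and $\lambda^{(n)}\to\lambda$ in $\mps$. I expect the only points demanding care to be the order of limits (fix the block, pass to the limit, and only then take the supremum, so that no infinite interchange is needed) and, for $p<1$, the bookkeeping of the quasi-triangle constant.
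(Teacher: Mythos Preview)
Your proposal is correct and follows essentially the same route as the paper: parts (ii)--(v) are handled by the same blockwise estimates (power-mean monotonicity/H\"older for (ii), the obvious comparison for (iii), testing on a single block containing $Q_{0,m}$ for (iv), and the crude $\ell_\infty$ bound on each block for (v)). For (i) you actually supply more than the paper does---it simply declares the result ``standard'' and refers to the one-dimensional case in \cite{GuKiSc}---but your Fatou-type argument (fix a finite block, pass to the limit coordinatewise, then take the supremum) is exactly the right mechanism and is what that reference does.
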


\begin{proof}
Part (i) is standard, the completeness can be shown similar to the (one-dimensional) counterpart in \cite{GuKiSc}. 
Assertion (ii)  follows from the monotonicity of $\ell_p$ spaces in $p$ and H\"older's inequality, while (iii) is obvious by Definition~\ref{D-mps}.

Now we prove (iv). Let $m\in\Zn$. For convenience, let us assume $k_0=0$ and  $\varphi(1)=\varphi(\ell(Q_{0,m})))=|Q_{0,m}|=1$. Then
	\begin{align*}
	|\lambda_m|= & \varphi(\ell(Q_{0,m}))  \left(|Q_{0,m}|^{-1}\sum_{k:Q_{0,k}\subset Q_{0,m}} \!\!\!|\lambda_k|^p\right)^{\frac1p}\\
	\leq & \sup_{j\in\no; m\in \Zn} \varphi(\ell(Q_{-j,m}))  \Big(|Q_{-j,m}|^{-1}\sum_{k:\, Q_{0,k}\subset Q_{-j,m}}\!\!\!  |\lambda_k|^p\Big)^\frac{1}{p}
	=  \|\lambda|\mps\|,
	\end{align*}
	such that finally, taking the supremum over all $m\in\Zn$,
	\[\|\lambda|\ell_\infty\| \leq \|\lambda|\mps\| .\]
 So it remains to deal with (v). By (iv) we only need to verify that $\ell_\infty \hookrightarrow \mps$ when $\sup_{k\in\no} \varphi(2^k) = c<\infty$. Let $\lambda \in \ell_\infty$. Thus for any $j\in\no$ and $m\in\Zn$,
	\[
\frac{\varphi(\ell(Q_{-j,m}))}{|Q_{-j,m}|^{1/p}} 
	\Big(\sum_{k:Q_{0,k}\subset Q_{-j,m}}\!\!\!\! |\lambda_k|^p\Big)^\frac{1}{p} \leq 
	c\ |Q_{-j,m}|^{-\frac{1}{p}} \|\lambda|\ell_\infty\|\ |Q_{-j,m}|^{\frac1p} 
 = c \|\lambda|\ell_\infty\|,
	\]
	which finally results in $\|\lambda|\mps\|\leq c\ \|\lambda|\ell_\infty\|$.
      \end{proof}

\begin{remark}
  Note that (v) is the counterpart of $\Mp(\Rn)=L_\infty(\Rn)$ (in the sense of equivalent norms), if, and only if, $0< \inf_{t>0} \varphi(t) \leq \sup_{t>0} \varphi(t)<\infty$, recall \eqref{emb-Linf-sharp}, \eqref{Linf-emb-sharp}. It can also be seen as the generalisation of $\ms(\Zn)$ in case of $u=\infty$. In Corollary~\ref{linf-mps} below we can prove that (v) is in fact an equivalence, if we assume $\varphi\in\Gp$.
\end{remark}
 
Note that the definition of $\varphi\in\Gp$ in \eqref{Gp-def} directly extends to our discrete case now, that is, we assume the sequence $\{\varphi(2^k)\}_{k\in\no}$ to be nondecreasing and $\{2^{-k\frac{\nd}{p}} \varphi(2^k)\}_{k\in\no}$ nonincreasing, i.e.,
\begin{equation}\label{Gp-def-disc}
1\leq \frac{\varphi(2^k)}{\varphi(2^j)}\leq 2^{(k-j)\frac{\nd}{p}}, \quad j\leq k, \quad j,k\in\no.
\end{equation}
{If the function   $\varphi:(0,\infty)\rightarrow (0,\infty)$ belongs to $\Gp$ in the sense of \eqref{Gp-def}, then the sequence  $\{\varphi(2^k)\}_{k\in\no}$ is nondecreasing and the estimates \eqref{Gp-def-disc} hold. Vice versa, 
any nondecreasing sequence  satisfying \eqref{Gp-def-disc}  is a restriction to ${\mathbb D}$ of some function belonging to $\Gp$. To prove  the last statement we can first define the function $\psi:(0,\infty)\rightarrow (0,\infty)$  being a step function taking the value $\varphi(2^k)$ on the segment $[2^k, 2^{k+1})$, $k\in\No$,  and  taking $\psi(t)=\varphi(1)$ if $0<t\le 1$. Afterwards we put 
\[ \phi(t)= t^\frac{d}{p}\sup_{s\ge t} \psi(s) s^{-d/p}, \quad t\in (0,\infty).\]  
 One can easily check using \eqref{Gp-def-disc} that $\phi(2^k)= \varphi(2^k)$ for any $k\in \N_0$. Moreover $\phi$ belongs to the $\Gp$ class. To prove this we can use an argument similar to the one used in Step 2 of the proof of Lemma~\ref{m-equiv-Gp} below. 
 To make the distinction between the discrete and the non-discrete case we shall write $\Gpd$ and $\Gp(\R_+)$ in the sequel, respectively. } 
  
\begin{example}\label{ex-rev}
  We return to Lemma~\ref{char-seq} and consider the sequence $\lambda^0 = \{\lambda^0_m\}_{m\in\Zn}$ again, defined by 
\[     \lambda_m^0 = \begin{cases} 1, & \text{if}\quad Q_{0,m} \subset Q_{-k_0, m_0}, \\ 0, & \text{otherwise},\end{cases} 
\]
where $k_0\in\no$ and $m_0\in\Zn$ are arbitrary. Observe that for $\varphi\in\Gpd$, $\varphi^+(2^{k_0}) = \varphi(2^{k_0}) = \varphi^\ast(2^{k_0})$, such that Lemma~\ref{char-seq}, in particular \eqref{norm-l0-a}, can be rewritten as 
\begin{equation} \|\lambda^0 | \mps\| = \varphi(\ell(Q_{-k_0, m_0})) = \varphi(2^{k_0}),\quad \varphi\in \Gpd.
    \end{equation}
\end{example}

We return to Prop.~\ref{Prop-1-mps}(v), now with the additional assumption {$\varphi\in\Gpd$}.

\begin{corollary}\label{linf-mps}
  Let $0<p<\infty$ and {$\varphi\in\Gpd$}. Then the following conditions are equivalent,
  \begin{enumerate}[\upshape\bfseries (i)]
\item  $\quad \sup_{k\in\no}\varphi(2^k)<\infty$,
\item
$ \quad \ell_\infty = \mps$, 
\item $\quad  \ell_\infty \hookrightarrow \mps$.
\end{enumerate}
  \end{corollary}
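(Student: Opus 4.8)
The plan is to establish the chain of implications $\mathrm{(i)}\Rightarrow\mathrm{(ii)}\Rightarrow\mathrm{(iii)}\Rightarrow\mathrm{(i)}$, exploiting that Proposition~\ref{Prop-1-mps} already does most of the work in the first two arrows. For $\mathrm{(i)}\Rightarrow\mathrm{(ii)}$, I would simply invoke Proposition~\ref{Prop-1-mps}(v): the assumption $\sup_{k\in\no}\varphi(2^k)<\infty$ is precisely the boundedness hypothesis there, which yields $\mps=\ell_\infty$. The implication $\mathrm{(ii)}\Rightarrow\mathrm{(iii)}$ is trivial, since equality of the spaces (with equivalent quasi-norms) in particular gives the continuous embedding $\ell_\infty\hookrightarrow\mps$.

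The only implication requiring genuine argument is $\mathrm{(iii)}\Rightarrow\mathrm{(i)}$, and this is where the assumption $\varphi\in\Gpd$ enters decisively. The idea is to test the embedding $\ell_\infty\hookrightarrow\mps$ on the characteristic sequences $\lambda^0$ from Lemma~\ref{char-seq} and Example~\ref{ex-rev}. Fix $k_0\in\no$ and an arbitrary $m_0\in\Zn$, and let $\lambda^0$ be the sequence equal to $1$ on those $Q_{0,m}\subset Q_{-k_0,m_0}$ and $0$ otherwise. Clearly $\|\lambda^0|\ell_\infty\|=1$. On the other hand, since $\varphi\in\Gpd$, Example~\ref{ex-rev} gives the exact value
\[
\|\lambda^0|\mps\| = \varphi(2^{k_0}).
\]
If $\ell_\infty\hookrightarrow\mps$ holds, then there is a constant $C>0$ with $\|\lambda^0|\mps\|\le C\,\|\lambda^0|\ell_\infty\|$ for every admissible sequence, so $\varphi(2^{k_0})\le C$ for all $k_0\in\no$. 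Taking the supremum over $k_0$ yields $\sup_{k\in\no}\varphi(2^k)\le C<\infty$, which is exactly $\mathrm{(i)}$.

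I expect the main (indeed the only) subtlety to be verifying that Example~\ref{ex-rev} is applicable, i.e.\ that the hypothesis \eqref{ddh-4} of Lemma~\ref{char-seq} is satisfied so that the clean identity $\|\lambda^0|\mps\|=\varphi(2^{k_0})$ is available. For $\varphi\in\Gpd$ this is automatic: the nonincreasing property of $\{2^{-j\frac{\nd}{p}}\varphi(2^j)\}_j$ forces $\varphi^\ast(2^{k_0})=\varphi(2^{k_0})$, so \eqref{ddh-4} holds and $\varphi^+(2^{k_0})=\varphi^\ast(2^{k_0})=\varphi(2^{k_0})$ as recorded in Example~\ref{ex-rev}. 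Everything else is bookkeeping: the test sequences have unit $\ell_\infty$-norm regardless of $k_0$ and $m_0$, so the embedding constant directly bounds $\varphi(2^{k_0})$. It is worth noting that the hypothesis $\varphi\in\Gpd$ is genuinely needed to sharpen Proposition~\ref{Prop-1-mps}(v) to an equivalence, since it is what makes the characteristic-sequence norm collapse to the single value $\varphi(2^{k_0})$ rather than the maximum $\max\{\varphi^\ast(2^{k_0}),\varphi^+(2^{k_0})\}$ appearing in the general Lemma~\ref{char-seq}.
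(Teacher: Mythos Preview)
Your proof is correct and follows essentially the same approach as the paper: the implications $\mathrm{(i)}\Rightarrow\mathrm{(ii)}\Rightarrow\mathrm{(iii)}$ are dispatched via Proposition~\ref{Prop-1-mps}, and $\mathrm{(iii)}\Rightarrow\mathrm{(i)}$ is obtained by testing the embedding on the characteristic sequences $\lambda^0$ of Example~\ref{ex-rev}, using $\|\lambda^0|\mps\|=\varphi(2^{k_0})$ and $\|\lambda^0|\ell_\infty\|=1$. Your additional remarks on why $\varphi\in\Gpd$ ensures \eqref{ddh-4} and collapses the norm to $\varphi(2^{k_0})$ are accurate and make explicit what the paper leaves implicit.
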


  \begin{proof}
    In view of Prop.~\ref{Prop-1-mps} it remains to show that $\ell_\infty\hookrightarrow \mps$ implies $\sup_{k\in\no}\varphi(2^k)<\infty$, where we now benefit from our additional assumption that {$\varphi\in\Gpd$}. In view of our above Example~\ref{ex-rev} we can argue, that for all $k_0\in\no$, 
    \[
\varphi(2^{k_0}) = \|\lambda^0 | \mps\| \leq c\ \|\lambda^0| \ell_\infty\| = c,
    \]
where we used $\ell_\infty\hookrightarrow \mps$. This concludes the argument. 
    \end{proof}
  
    \begin{remark}
This is obviously the counterpart of \eqref{Linf-emb-sharp}. Moreover, it explains why we shall often exclude the case $\sup_{k\in\no} \varphi(2^k)<\infty$ from our considerations below, as the space $\ell_\infty$ is already well-studied, unlike $\mps$.
    \end{remark}

\begin{lemma}\label{m-equiv-Gp}
Let $0<p<\infty$ and $\varphi: \N\rightarrow [0,\infty)$ be a function satisfying $\varphi(2^{k_0})\neq 0$ for some $k_0\in\no$. 
Assume that $\displaystyle\sup_{k\in\no} \varphi(2^k)  2^{-k\frac{\nd}{p}} < \infty$. Then there exists some {$\tilde{\varphi}\in\Gpd$} such that
$$
\mps(\Zn)= m_{\tilde{\varphi},p}(\Zn)
$$
in the sense of equivalent (quasi-)norms.
\end{lemma}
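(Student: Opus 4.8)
The plan is to reduce the $\mps$-quasi-norm to a \emph{one-parameter} supremum and then regularise $\varphi$ into a $\Gpd$-function without changing the value of that supremum (so that in fact the two quasi-norms will coincide, not merely be equivalent, paralleling Lemma~\ref{M-not-triv}(ii)). Writing $S_{j,m}=\big(\sum_{k:Q_{0,k}\subset Q_{-j,m}}|\lambda_k|^p\big)^{1/p}$ and $\sigma_j=\sup_{m\in\Zn}S_{j,m}$, I would pull the inner supremum out of Definition~\ref{D-mps}, since the factor $\varphi(2^j)2^{-j\nd/p}$ is independent of $m$, to get
\[
  \|\lambda\mid\mps\| = \sup_{j\in\no}\varphi(2^j)\,T_j,\qquad T_j:=2^{-j\nd/p}\sigma_j .
\]
First I would record the two structural monotonicity properties of $\{T_j\}$ attached to a fixed $\lambda$: the sequence $\{T_j\}$ is nonincreasing and $\{2^{j\nd/p}T_j\}=\{\sigma_j\}$ is nondecreasing. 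The second is immediate, because every $Q_{-i,m}$ lies inside some $Q_{-j,m'}$ with $j\ge i$, so $S_{i,m}\le S_{j,m'}\le\sigma_j$. The first is the only quantitative step: each $Q_{-j,m'}$ splits into $2^{(j-i)\nd}$ subcubes of generation $i\le j$, so an averaging (pigeonhole) argument yields a subcube $Q_{-i,m''}$ with $S_{i,m''}\ge 2^{-(j-i)\nd/p}S_{j,m'}$, whence $\sigma_i\ge 2^{-(j-i)\nd/p}\sigma_j$, i.e. $T_i\ge T_j$.

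Next I would define the candidate regularisation directly by the combined supremum
\[
  \tilde{\varphi}(2^j):=\sup_{i\in\no}\varphi(2^i)\,\min\!\big(1,\,2^{(j-i)\nd/p}\big),\qquad j\in\no .
\]
Its finiteness follows from the hypothesis $\sup_i\varphi(2^i)2^{-i\nd/p}<\infty$ (which controls the tail $i>j$) together with the finite maximum over $i\le j$; its positivity for every $j$ comes from the term $i=k_0$, since $\varphi(2^{k_0})\neq0$. That $\tilde{\varphi}\in\Gpd$ is a short check: $\min(1,2^{(j-i)\nd/p})$ is nondecreasing in $j$, giving monotonicity of $\{\tilde{\varphi}(2^j)\}$, while $2^{-j\nd/p}\tilde{\varphi}(2^j)=\sup_i\varphi(2^i)2^{-\max(i,j)\nd/p}$ is nonincreasing in $j$, giving the second half of \eqref{Gp-def-disc}. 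Since $\tilde{\varphi}(2^j)\ge\varphi(2^j)$ (take $i=j$), the bound $\|\lambda\mid m_{\tilde{\varphi},p}\|\ge\|\lambda\mid\mps\|$ is free.

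The heart of the matter is the reverse estimate, where the two properties of $\{T_j\}$ do all the work through the single pointwise inequality
\[
  \min\!\big(1,2^{(j-i)\nd/p}\big)\,T_j\ \le\ T_i\qquad\text{for all }i,j\in\no :
\]
for $i\le j$ this is the monotonicity $T_j\le T_i$, and for $i>j$ it is exactly $2^{-(i-j)\nd/p}T_j\le T_i$ from $\{2^{j\nd/p}T_j\}$ being nondecreasing. Multiplying by $\varphi(2^i)$ and taking the supremum over $i$ gives $\tilde{\varphi}(2^j)T_j\le\sup_i\varphi(2^i)T_i=\|\lambda\mid\mps\|$, and the supremum over $j$ then yields $\|\lambda\mid m_{\tilde{\varphi},p}\|\le\|\lambda\mid\mps\|$; combined with the previous paragraph the two quasi-norms coincide. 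I expect the only real obstacle to be bookkeeping: justifying the interchange that produces $\sup_j\varphi(2^j)T_j$ and handling the degenerate case $\sigma_{j_0}=\infty$. The latter is harmless, since by the two monotonicity properties $\sigma_{j_0}=\infty$ for one $j_0$ forces $\sigma_j=\infty$ for all $j$ (upwards by monotonicity of $\{\sigma_j\}$, downwards by the pigeonhole bound), and then $\varphi(2^{k_0})\neq0$ already makes $\|\lambda\mid\mps\|=\infty$; thus on the actual spaces all $\sigma_j$ are finite and the computation is legitimate.
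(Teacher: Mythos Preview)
Your proof is correct and in fact yields exact equality of the two quasi-norms. The route, however, is genuinely different from the paper's. The paper regularises $\varphi$ in two separate passes: first it replaces $\varphi$ by its nondecreasing hull $\varphi_1(2^k)=\inf_{j\ge k}\varphi(2^j)$ and shows $m_{\varphi_1,p}=\mps$ (with equality of norms) via the same cube-decomposition estimate you use for ``$T_j$ nonincreasing''; then it replaces $\varphi_1$ by $\tilde{\varphi}(2^k)=2^{k\nd/p}\sup_{j\ge k}\varphi_1(2^j)2^{-j\nd/p}$ and shows $m_{\tilde{\varphi},p}=m_{\varphi_1,p}$ via an $\varepsilon$-argument. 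The resulting $\tilde{\varphi}$ is in general \emph{not} the same as yours (and need not dominate $\varphi$), whereas your formula $\tilde{\varphi}(2^j)=\sup_i\varphi(2^i)\min(1,2^{(j-i)\nd/p})$ produces precisely the least $\Gpd$-majorant of $\varphi$. Your one-shot construction is more compact and makes the r\^ole of the $\Gpd$ axioms transparent through the single pointwise inequality $\min(1,2^{(j-i)\nd/p})T_j\le T_i$; the paper's two-step procedure has the advantage of mirroring the continuous-case argument of \cite{NNS16,Saw18} step by step and of isolating the two monotonicity conditions in \eqref{Gp-def-disc} as separate reductions.
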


\begin{proof}~
In view of our assumptions, Lemma~\ref{m-not-triv} implies that $\mps\neq \{0\}$. 
  
  {\em Step 1}.~ Let $\varphi_1(2^k) := \inf_{j\geq k} \varphi(2^j) \leq \varphi(2^k)$, $k\in\no$, then by Definition~\ref{D-mps}, for any $\lambda\in\mps$,
  \[
\|\lambda | m_{\varphi_1,p}\| \leq \|\lambda | \mps\|, \quad \text{that is,}\quad \mps \hookrightarrow m_{\varphi_1,p}  .
    \]
Plainly, $\{\varphi_1(2^k)\}_{k\in\no}$ is monotonically nondecreasing. Next we show that $\|\lambda |  m_{\varphi_1,p}\| \geq \|\lambda | \mps\|$ for any $\lambda\in m_{\varphi_1,p}$.

Let $j\in\no$, $m\in\Zn$, $k\leq j$, and decompose the cube $Q_{-j,m}$ into $2^{(j-k)\nd}$ cubes $Q_{-k,l}$, then

\begin{align*}
  \lefteqn{\Big(|Q_{-j,m}|^{-1} \sum_{r: Q_{0,r}\subset Q_{-j,m}} |\lambda_r|^p\Big)^{\frac1p}}\\
  & \leq 2^{-j\frac{\nd}{p}} \sup_{l\in\Zn: Q_{-k,l}\subset Q_{-j,m}} \Big(\sum_{r: Q_{0,r}\subset Q_{-k,l}} |\lambda_r|^p\Big)^{\frac1p} 2^{(j-k)\frac{\nd}{p}} \\
&= \sup_{l\in\Zn: Q_{-k,l}\subset Q_{-j,m}} \Big( |Q_{-k,l}|^{-1}\sum_{r: Q_{0,r}\subset Q_{-k,l}} |\lambda_r|^p\Big)^{\frac1p},
\intertext{hence}
                                                                                                                                                                      \lefteqn{\varphi(2^j) \Big(|Q_{-j,m}|^{-1} \sum_{r: Q_{0,r}\subset Q_{-j,m}} |\lambda_r|^p\Big)^{\frac1p}}\\
  & \leq \inf_{k: k\leq j} \varphi(2^j) \sup_{l\in\Zn: Q_{-k,l}\subset Q_{-j,m}} \Big( |Q_{-k,l}|^{-1}\sum_{r: Q_{0,r}\subset Q_{-k,l}} |\lambda_r|^p\Big)^{\frac1p}\\
&\leq \inf_{k: k\leq j} \varphi(2^j) \sup_{l\in\Zn} \Big( |Q_{-k,l}|^{-1}\sum_{r: Q_{0,r}\subset Q_{-k,l}} |\lambda_r|^p\Big)^{\frac1p}\\
&\leq \varphi_1(2^k)  \sup_{l\in\Zn} \Big( |Q_{-k,l}|^{-1}\sum_{r: Q_{0,r}\subset Q_{-k,l}} |\lambda_r|^p\Big)^{\frac1p} \\ 
&\leq \|\lambda | m_{\varphi_1,p}\|,
\end{align*}
for arbitrary $j\in\no$ and $m\in\Zn$, such that finally $\|\lambda | \mps\|\leq \|\lambda | m_{\varphi_1,p}\|$, that is, $m_{\varphi_1,p} \hookrightarrow \mps$.

So we have proved that $m_{\varphi_1,p} = \mps$ (with equality of norms), where $\varphi_1$ is nondecreasing.\\

{\em Step 2}.~ Now we consider the function
\[ \tilde{\varphi}(2^k):= \varphi_1^\ast(2^k) = 2^{k\frac{\nd}{p}} \sup_{j\geq k} \varphi_1(2^j) 2^{-j\frac{\nd}{p}} = \sup_{l\in\no} 2^{-l\frac{\nd}{p}} \varphi_1(2^{k+l}), \quad k\in\no. \]
Let $j\leq k$, then by the monotonicity of $\varphi_1$,
\[
  \tilde{\varphi}(2^{j}) = \sup_{l\in\no} 2^{-l\frac{\nd}{p}} \varphi_1(2^{j+l})
\leq \sup_{l\in\no} 2^{-l\frac{\nd}{p}} \varphi_1(2^{k+l}) = \tilde{\varphi}(2^{k}),
\]
that is, $\tilde{\varphi}$ is nondecreasing, too. We need to verify the latter inequality in \eqref{Gp-def-disc}. Let again $j\leq k$, then
\[
  2^{-j\frac{\nd}{p}} \tilde{\varphi}(2^j) = \sup_{r\geq j} 2^{-r\frac{\nd}{p}} \varphi_1(2^r) \geq \sup_{r\geq k} 2^{-r\frac{\nd}{p}} \varphi_1(2^r) = 2^{-k\frac{\nd}{p}} \tilde{\varphi}(2^k).
\]
Hence $\tilde{\varphi}\in \Gpd $ and it remains to show that $m_{\tilde{\varphi},p} = m_{\varphi_1,p}$ (with equivalence of norms). Obviously, $\tilde{\varphi}(2^k) \geq \varphi_1(2^k)$, $k\in\no$, such that $m_{\tilde{\varphi},p} \hookrightarrow  m_{\varphi_1,p}$. Let $k\in\no$ and $\varepsilon>0$. Then by definition of $\tilde{\varphi}$ again, there exists $k_0=k_0(\varepsilon,k)\in\no$, $k_0\geq k$, such that
\[
  2^{-k\frac{\nd}{p}} \tilde{\varphi}(2^k)\geq \varphi_1(2^{k_0}) 2^{-k_0\frac{\nd}{p}} > (1-\varepsilon) 2^{-k\frac{\nd}{p}} \tilde{\varphi}(2^k).
  \]
  Hence
  \begin{align*}
    \|\lambda | m_{\tilde{\varphi},p} \| & = \sup_{k\in\no, m\in\Zn} \tilde{\varphi}(2^k) 2^{-k\frac{\nd}{p}} \Big(\sum_{r: Q_{0,r}\subset Q_{-k,m}} |\lambda_r|^p\Big)^{\frac1p} \\
                                         & < \frac{1}{1-\varepsilon} \sup_{k_0\in\no, m'\in\Zn} \varphi_1(2^{k_0}) 2^{-k_0\frac{\nd}{p}} \Big(\sum_{r: Q_{0,r}\subset Q_{-k_0,m'}} |\lambda_r|^p\Big)^{\frac1p}\\
    &\leq \frac{1}{1-\varepsilon} \|\lambda|m_{\varphi_1,p}\|, 
  \end{align*}
  which finally results for $\varepsilon\downarrow 0$ in the desired estimate 
  $\|\lambda |  m_{\tilde{\varphi},p}\| \leq \|\lambda |  m_{\varphi_1,p}\|$, i.e., $
 m_{\varphi_1,p} \hookrightarrow     m_{\tilde{\varphi},p}$.
\end{proof}

From now on we shall thus assume that {$\varphi\in\Gpd$}. Moreover, for convenience we shall always assume that $\varphi(1)=1$.
In view of our comparison with the already studied spaces $\ms(\Zn)$, which refers to {$\varphi(2^k)=2^{k\nd/u} \in\Gpd$}, $0<p\leq u<\infty$, this restriction seems natural in this sense, too.  
  
\begin{corollary}\label{lp-mps}
  Let $0<p<\infty$, {$\varphi\in\Gpd$} with $\varphi(1)=1$. Then
  \begin{equation}\label{lp-mfp}
\ell_p \hookrightarrow \mps.
  \end{equation}
\end{corollary}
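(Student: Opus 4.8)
The plan is to establish the embedding directly by showing that $\|\lambda|\mps\|\le\|\lambda|\ell_p\|$ for every $\lambda\in\ell_p$, which by Definition~\ref{D-mps} immediately yields $\ell_p\hookrightarrow\mps$. Since $\ell(Q_{-j,m})=2^j$ and $|Q_{-j,m}|=2^{j\nd}$, the $\mps$-quasi-norm takes the scaled form
\[
\|\lambda|\mps\|=\sup_{j\in\no;\,m\in\Zn}\varphi(2^j)\,2^{-j\frac{\nd}{p}}\Big(\sum_{k:\,Q_{0,k}\subset Q_{-j,m}}|\lambda_k|^p\Big)^{\frac1p},
\]
so the task naturally splits into two parts: controlling the scalar prefactor $\varphi(2^j)2^{-j\nd/p}$ uniformly in $j$, and bounding the restricted inner sum by the full $\ell_p$-sum.

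For the prefactor I would invoke the discrete counterpart of \eqref{Gp-tlarge}. Since $\varphi\in\Gpd$ with $\varphi(1)=1$, the second inequality in \eqref{Gp-def-disc} taken with $j=0$ gives $\varphi(2^k)\le 2^{k\nd/p}\varphi(2^0)=2^{k\nd/p}$ for all $k\in\no$, that is,
\[
\varphi(2^k)\,2^{-k\frac{\nd}{p}}\le 1,\qquad k\in\no,
\]
equivalently, the sequence $\{2^{-k\nd/p}\varphi(2^k)\}_{k\in\no}$ is nonincreasing and starts from $\varphi(1)=1$. For the inner sum I would simply drop the restriction $Q_{0,k}\subset Q_{-j,m}$ and estimate
\[
\Big(\sum_{k:\,Q_{0,k}\subset Q_{-j,m}}|\lambda_k|^p\Big)^{\frac1p}\le\Big(\sum_{k\in\Zn}|\lambda_k|^p\Big)^{\frac1p}=\|\lambda|\ell_p\|,
\]
a bound that is uniform in both $j\in\no$ and $m\in\Zn$.

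Combining the two estimates gives $\|\lambda|\mps\|\le\big(\sup_{j\in\no}\varphi(2^j)2^{-j\nd/p}\big)\,\|\lambda|\ell_p\|\le\|\lambda|\ell_p\|$, which is exactly the assertion \eqref{lp-mfp}. The argument is essentially immediate once the quasi-norm is written in the scaled form above, so there is no genuine obstacle here; the single point that must be checked is the uniform bound $\varphi(2^j)2^{-j\nd/p}\le1$, and this is precisely where the normalisation $\varphi(1)=1$ together with membership in $\Gpd$ is used. Combined with Proposition~\ref{Prop-1-mps}(iv) this produces the natural chain $\ell_p\hookrightarrow\mps\hookrightarrow\ell_\infty$, the discrete companion of the sharp function-space statement \eqref{Mphi-Lr} in the borderline case $r=p$.
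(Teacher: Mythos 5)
Your proof is correct and is essentially the paper's argument written out in full: the paper obtains the same key bound $\varphi(2^k)\le 2^{k\nd/p}$ from \eqref{Gp-def-disc} with $j=0$, identifies $\ell_p=m_{\varphi_p,p}$ for $\varphi_p(2^k)=2^{k\nd/p}$, and concludes via the monotonicity statement of Proposition~\ref{Prop-1-mps}(iii), which is exactly the direct estimate you carry out by hand. No gap; the two presentations differ only in whether the comparison of the quasi-norms is delegated to the proposition or computed explicitly.
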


\begin{proof}
  Let {$\varphi\in \Gpd$}, then \eqref{Gp-def-disc} with $j=0$ implies that $\varphi(2^k)\leq 2^{k\frac{\nd}{p}} =: \varphi_p(2^k)$. Then $m_{\varphi_p,p} = \ell_p$ and Prop.~\ref{Prop-1-mps}(iii) proves the claim.
\end{proof}

{We want to refine the above result \eqref{lp-mfp} in the sense of \cite[Prop.~2.3(iv)]{HaSk_Mo_seq}, see \cite{GuKiSc} for its one-dimensional counterpart. There we had for the function $\varphi(t)\sim t^{\nd/u}$ that $\ell_{u,\infty} \hra \ms$ when $p<u$. Here $\ell_{u,\infty}=\ell_{u,\infty}(\Zn)$, $0<u<\infty$, are the usual Lorentz sequence spaces introduced in \eqref{lorentz-2}.

For some $\varphi\in\Gpd$, let us first introduce the number $\rphi{\varphi}$ parallel to our paper \cite{HLMS}. In view of the monotonicity $\Gpx{p_2}(\mathbb{D})\subseteq\Gpx{p_1}(\mathbb{D})$ for $p_1\leq p_2$, we ask for the `smallest' class $\Gpx{r}(\mathbb{D})$ to which $\varphi\in\Gpd$ belongs, that is, the largest index $r$. Let $\varphi\in \mathcal{G(\mathbb{D})}=\bigcup_{p>0}\Gpd = \lim_{p \to 0} \Gpd$ and 
\begin{align}\label{def-rphi}
		\rphi{\varphi} = \sup\{p:\; \varphi\in\Gpd\},\quad \varphi \in \mathcal{G}(\mathbb{D}).
	\end{align}	
Then 	$0<\rphi{\varphi} <\infty$ or $\rphi{\varphi} =\infty$.  Parallel to \cite[Lemma~4.7]{HLMS} one observes that $\rphi{\varphi}<\infty$ implies  $\varphi\in \mathcal{G}_{\rphi{\varphi}}(\mathbb{D})$, that is, $\rphi{\varphi}=\max\{p:\; \varphi\in\Gpd\}$, while $\rphi{\varphi}=\infty$ if, and only if, $\varphi = \mathrm{const}$. 
We shall assume in the sequel that $\rphi{\varphi} < \infty$.}

{\begin{remark}
	If $\varphi(t) \sim t^{\frac{\nd}{p}} (\log(1+t))^{-a}$, $a>0$, $t\geq 1$, then  $\varphi(2^k) \sim 2^{\frac{k\nd}{p}} k^{-a}$, $k\in \no$. One can easily check that $\rphi{\varphi}=p$. 
\end{remark}}

Now we are ready to give the counterpart of \cite[Prop.~2.3(iv)]{HaSk_Mo_seq}.

\begin{lemma}\label{lorentz-mps}
  Assume $0<p<\infty$, {$\varphi\in\Gpd$} with $\varphi(1)=1$, $ \sup_{k\in\no} \varphi(2^k)=\infty$, and $p<\rphi{\varphi}<\infty$. Then
  \[ \ell_{\rphi{\varphi},\infty} \hra  \mps. \]  
\end{lemma}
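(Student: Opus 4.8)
Write $u:=\rphi{\varphi}$, so that by hypothesis $p<u<\infty$ and, since $\rphi{\varphi}=\max\{q:\varphi\in\Gpx{q}(\mathbb{D})\}$, we have $\varphi\in\Gpx{u}(\mathbb{D})$. The plan is to bound $\|\lambda\mid\mps\|$ directly by $\|\lambda\mid\ell_{u,\infty}\|$ for an arbitrary $\lambda\in\ell_{u,\infty}$, working one dyadic cube at a time and passing to the non-increasing rearrangement $\{\lambda^\ast_\nu\}$ (which is available because $\ell_{u,\infty}\subset c_0$).

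First I would fix $j\in\no$ and $m\in\Zn$ and note that $Q_{-j,m}$ has side length $2^j$ and hence contains exactly $N:=2^{j\nd}$ of the unit cubes $Q_{0,k}$. The defining inner sum of the $\mps$-norm therefore runs over a set of cardinality $N$, and since a sum of $N$ of the numbers $|\lambda_k|^p$ is never larger than the sum of the $N$ largest of them,
\[
  \sum_{k:\,Q_{0,k}\subset Q_{-j,m}}|\lambda_k|^p \;\le\; \sum_{\nu=1}^{N}(\lambda^\ast_\nu)^p .
\]
Next I would insert the pointwise bound $\lambda^\ast_\nu\le\|\lambda\mid\ell_{u,\infty}\|\,\nu^{-1/u}$ from \eqref{lorentz-2} and use $p<u$ to sum the resulting series with exponent $p/u<1$:
\[
  \sum_{\nu=1}^{N}(\lambda^\ast_\nu)^p \;\le\; \|\lambda\mid\ell_{u,\infty}\|^p\sum_{\nu=1}^{N}\nu^{-p/u}
  \;\le\; C\,\|\lambda\mid\ell_{u,\infty}\|^p\,N^{1-p/u},
\]
with $C=C(p/u)<\infty$. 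Taking $p$-th roots and recalling $N=2^{j\nd}$ gives the inner factor $\ls\|\lambda\mid\ell_{u,\infty}\|\,2^{j\nd(1/p-1/u)}$.

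Multiplying by the weight $\varphi(2^j)2^{-j\nd/p}$ of the $\mps$-norm, the factor $2^{j\nd/p}$ cancels and leaves $\varphi(2^j)2^{-j\nd/u}$. Because $\varphi\in\Gpx{u}(\mathbb{D})$, the sequence $\{\varphi(2^k)2^{-k\nd/u}\}_k$ is nonincreasing and equals $\varphi(1)=1$ at $k=0$, so $\varphi(2^j)2^{-j\nd/u}\le1$ for every $j$. Hence each term of the supremum defining $\|\lambda\mid\mps\|$ is bounded by $C^{1/p}\|\lambda\mid\ell_{u,\infty}\|$ uniformly in $j$ and $m$, and taking the supremum finishes the estimate.

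The one point that must be handled with care is the place where $p<u$ is genuinely used: it is exactly what makes $\sum_{\nu\le N}\nu^{-p/u}$ behave like $N^{1-p/u}$ rather than like $\log N$ (the borderline $p=u$), so that the cancellation of $2^{j\nd/p}$ is clean and the residual weight $\varphi(2^j)2^{-j\nd/u}$ stays bounded by $1$. The standing hypothesis $\sup_{k}\varphi(2^k)=\infty$ is not needed for the estimate itself; it only serves to exclude the degenerate case $\mps=\ell_\infty$ of Corollary~\ref{linf-mps}. Everything else is the standard rearrangement inequality together with the $\Gpx{u}(\mathbb{D})$-monotonicity normalised by $\varphi(1)=1$.
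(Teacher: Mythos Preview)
Your proof is correct and follows essentially the same route as the paper's: bound the finite sum over a dyadic cube by the first $2^{j\nd}$ terms of the rearrangement, insert the weak-$\ell_u$ pointwise bound $\lambda^\ast_\nu\le\|\lambda\mid\ell_{u,\infty}\|\,\nu^{-1/u}$, sum $\nu^{-p/u}$ using $p<u$, and then absorb the residual factor via $\varphi\in\Gpx{u}(\mathbb{D})$ with $\varphi(1)=1$. Your additional remarks on where the strict inequality $p<u$ enters and on the r\^ole of the unboundedness hypothesis are accurate and not in the paper's proof, but the core argument is the same.
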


\begin{proof}
	Let $\{\lambda^*_\nu\}_{\nu\in\nat}$ be a non-increasing rearrangement of a sequence $\lambda=\{\lambda_k\}_{k\in\Zn}$. Then for any cube $Q_{-j,m}$ we have  
	\begin{align*}
	\Big( \sum_{k:Q_{0,k}\subset Q_{-j,m}}|\lambda_k|^p\Big)^{1/p} &\le \Big( \sum_{\nu=1}^{2^{j\nd}}(\lambda^*_\nu)^p\Big)^{1/p}
	\le \ \sup_{l\in\nat} l^{1/\rphi{\varphi}} \lambda^*_l \, \Big( \sum_{\nu=1}^{2^{j\nd}}\nu^{-p/\rphi{\varphi}}\Big)^{1/p}\\
                                                                       &\le C  \|\lambda|\ell_{\rphi{\varphi}, \infty}\|\, |Q_{-j,m}|^{\frac{1}{p}- \frac{1}{\rphi{\varphi}}} \\
          &= \ C  \|\lambda|\ell_{\rphi{\varphi}, \infty}\|\, |Q_{-j,m}|^{\frac{1}{p}} 2^{-j \frac{\nd}{\rphi{\varphi}}},
	\end{align*}
	since $p<\rphi{\varphi}$ and $|Q_{-j,m}|=2^{j\nd}$. Thus
        \begin{align*} \varphi(2^j) |Q_{-j,m}|^{-\frac{1}{p}} 	\Big( \sum_{k:Q_{0,k}\subset Q_{-j,m}}|\lambda_k|^p\Big)^{1/p}  & \leq C  \|\lambda|\ell_{\rphi{\varphi}, \infty}\|\ \varphi(2^j) 2^{-j \frac{\nd}{\rphi{\varphi}}} \\
          & \leq C \|\lambda|\ell_{\rphi{\varphi}, \infty}\|,\end{align*}
        since $\varphi\in \mathcal{G}_{\rphi{\varphi}}(\mathbb{D})$ when $\rphi{\varphi}<\infty$, and $\varphi(1)=1$. Taking the supremum over $j\in\no$ and $m\in\Zn$ leads to $\|\lambda|\mps\|\leq C\|\lambda|\ell_{\rphi{\varphi},\infty}\|$.
\end{proof}

\begin{remark}
In case of $\varphi(t) \sim t^{\frac{\nd}{u}}$, $\rphi{\varphi}=u$,  Lemma~\ref{lorentz-mps} coincides with \cite[Prop.~2.3(iv)]{HaSk_Mo_seq}.
  \end{remark}

In \cite{HaSk_Mo_seq} we proved that for $\varphi(t) \sim t^{\nd/u}$, $0<p<u<\infty$, the spaces $\mps$
and $c$ or $c_0$, respectively, are incomparable in the sense that for
$\varphi(t) \sim t^{\nd/u}$, $u>p$,
\begin{equation}\label{ddh-1}
  \mps\not\subset c_0, \quad \mps\not\subset c, \quad c_0\not\subset \mps\quad \text{and}\quad c\not\subset \mps,
\end{equation}
while in case of $\varphi(t)\sim t^{\nd/p}$, $0<p=u<\infty$, it is well-known that
\begin{equation}\label{ddh-2}
\mps = \ell_p \subsetneq c_0 \subsetneq c \subsetneq \ell_\infty. 
\end{equation}
So we need to have a closer look on the properties of $\varphi$. Recall that by \eqref{Gp-def-disc} we always have that $\{2^{-k\frac{\nd}{p}} \varphi(2^k)\}_{k\in\no}$ converges to some number $\alpha\in [0,1]$ for $k\to\infty$, parallel to \eqref{lim-exist}. Moreover, in view of Prop.~\ref{Prop-1-mps}(v) we may exclude the case $\sup_{k\in\no} \varphi(2^k)<\infty$. 

\begin{lemma}\label{mps-subset-c}
  Let $0<p < \infty$ and {$\varphi\in \Gpd$} with  $\sup_{k\in\no} \varphi(2^k)=\infty$. 
  \begin{enumerate}[\upshape\bfseries (i)]
	\item
          Assume that $\ds \lim_{k\to\infty} 2^{-k\frac{\nd}{p}} \varphi(2^k) >0$. Then
          \begin{equation}
            \mps = \ell_p \subsetneq c_0 \subsetneq c \subsetneq \ell_\infty.
            \end{equation}
          \item
            Assume that $\ds \lim_{k\to\infty} 2^{-k\frac{\nd}{p}} \varphi(2^k) = 0$. Then  
           the space  $\mps$ is incomparable with $c_0$ and $c$,
            that is, $\mps\not\subset c_0$, $\mps\not\subset c$, $c_0\not\subset \mps$ {and} $c\not\subset \mps$.
          \end{enumerate}
\end{lemma}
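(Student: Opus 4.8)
The plan is to split on the value of $\alpha:=\lim_{k\to\infty}2^{-k\frac{\nd}{p}}\varphi(2^k)\in[0,1]$, which exists as the monotone limit of the nonincreasing sequence $\{2^{-k\frac{\nd}{p}}\varphi(2^k)\}_{k\in\no}$ by \eqref{Gp-def-disc}: part (i) is the case $\alpha>0$, part (ii) the case $\alpha=0$. For (i), that sequence starts at $\varphi(1)=1$ and tends to $\alpha>0$, so $\alpha\,2^{k\nd/p}\le\varphi(2^k)\le 2^{k\nd/p}$, i.e. $\varphi(2^k)\sim\varphi_p(2^k)$ with $\varphi_p(2^k):=2^{k\nd/p}$. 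Since $m_{\varphi_p,p}=\ell_p$, two applications of Prop.~\ref{Prop-1-mps}(iii), one in each direction (the embedding $\ell_p\hookrightarrow\mps$ being also Corollary~\ref{lp-mps}), give $\mps=\ell_p$ with equivalent quasi-norms. The strict chain $\ell_p\subsetneq c_0\subsetneq c\subsetneq\ell_\infty$ is then the classical fact recorded in \eqref{ddh-2}, and (i) follows.

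For (ii) I would establish the four non-inclusions by two constructions. For $c_0\not\subset\mps$ and $c\not\subset\mps$ it suffices (as $c_0\subset c$) to exhibit $\lambda\in c_0\setminus\mps$. Fix $k_i\uparrow\infty$, note $\varphi(2^{k_i})\to\infty$ since $\varphi$ is nondecreasing and unbounded, choose pairwise disjoint cubes $Q_{-k_i,m_i}$ receding to infinity, and set $\lambda=\sum_i c_i\lambda^{(i)}$, where $\lambda^{(i)}$ is the characteristic sequence of $Q_{-k_i,m_i}$ and $c_i=\varphi(2^{k_i})^{-1/2}\to 0$. Then $\lambda\in c_0$, while testing the $\mps$-norm on the single cube $Q_{-k_i,m_i}$ (on which only block $i$ is nonzero) and using Example~\ref{ex-rev} gives $\|\lambda\mid\mps\|\ge c_i\varphi(2^{k_i})=\varphi(2^{k_i})^{1/2}\to\infty$, so $\lambda\notin\mps$. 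This argument needs only $\sup_k\varphi(2^k)=\infty$, consistent with the fact that $c_0\not\subset\mps=\ell_p$ already in case (i).

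The heart of the matter, and where $\alpha=0$ is essential, is $\mps\not\subset c_0$ and $\mps\not\subset c$; here it suffices to produce one $\lambda\in\mps\setminus c$ (which then lies in $\mps\setminus c_0$ as well). I would take $\lambda=\mathbf 1_S$ for an infinite set $S\subset\Zn$ whose points tend to infinity and whose complement is also infinite at infinity, so that $\lambda$ oscillates between $1$ and $0$ and fails to converge. Setting $g(j):=\big(2^{-j\frac{\nd}{p}}\varphi(2^j)\big)^{-p}=2^{j\nd}\varphi(2^j)^{-p}$, one has $\|\mathbf 1_S\mid\mps\|^p=\sup_{j\in\no,m\in\Zn}\#(S\cap Q_{-j,m})/g(j)$, so the task is to place infinitely many points with $\#(S\cap Q_{-j,m})\lesssim g(j)$ uniformly. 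The assumption $\alpha=0$ is exactly what makes this possible, being equivalent to $g(j)\to\infty$; when $\alpha>0$, $g$ is bounded, $S$ must be finite, and indeed $\mps=\ell_p\subset c_0$. Concretely I would place, in each dyadic shell $Q_{-(j+1),0}\setminus Q_{-j,0}$, exactly $\lfloor g(j+1)\rfloor-\lfloor g(j)\rfloor$ points on a maximally separated sublattice, giving $S\subset[0,\infty)^\nd$ with counting function $\#(S\cap Q_{-J,0})\approx g(J)$.

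The main obstacle is verifying $\#(S\cap Q_{-j,m})\lesssim g(j)$ for every cube, not only those anchored at the origin. Two facts drive this. First, the doubling $g(j)\le g(j+1)\le 2^\nd g(j)$, which follows from $\varphi(2^j)\le\varphi(2^{j+1})\le 2^{\nd/p}\varphi(2^j)$ in \eqref{Gp-def-disc}. Second, a fixed cube $Q_{-j,m}$ meets only $O(1)$ of the shells, the sole exception being the origin cube $Q_{-j,0}$, which contains all inner shells and for which the count is $\approx g(j)$ by construction; for a cube meeting a coarser shell $J>j$ the maximal separation forces the local count to be $\lesssim 2^{(j-J)\nd}g(J+1)\le 2^{(j+1)\nd}\varphi(2^{J+1})^{-p}\le 2^\nd g(j)$, where the last step uses $\varphi(2^{J+1})\ge\varphi(2^j)$. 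Summing the $O(1)$ relevant shells yields the uniform bound, hence $\mathbf 1_S\in\mps\setminus c$. This construction generalises the special case $\varphi(t)\sim t^{\nd/u}$, $u>p$, underlying \eqref{ddh-1}.
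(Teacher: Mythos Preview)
Your argument is correct. Part (i) and the construction showing $c_0\not\subset\mps$ (hence $c\not\subset\mps$) match the paper's proof closely, modulo inessential differences in how the $c_0$-sequence is supported.

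The genuine divergence is in the proof of $\mps\not\subset c$. The paper takes a far sparser construction: it selects an increasing sequence $\{n_\ell\}$ with $\varphi(2^{n_\ell})2^{-n_\ell\nd/p}\le\ell^{-1/p}$ and sets $\lambda_k=1$ only at the single points $k=(2^{n_\ell},0,\dots,0)$, zero elsewhere. The verification that $\lambda\in\mps$ is then almost immediate, since any dyadic cube of side $2^j$ with $n_\ell\le j<n_{\ell+1}$ contains at most $\ell$ support points and the nonincreasing factor $\varphi(2^j)2^{-j\nd/p}\le\ell^{-1/p}$ cancels the $\ell^{1/p}$ from the count; cubes not anchored at the origin contain at most one support point by the geometry of powers of $2$. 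Your dense-shell construction, by contrast, saturates the norm ($\#(S\cap Q_{-J,0})\approx g(J)$), which necessitates the sublattice-separation and doubling arguments you sketch. These arguments are sound, though you would need to say a word about the additive $+1$ in the standard count $(L/s+1)^\nd$ (harmless since $g(j)\ge 1$) and about distributing points in the non-convex shell (e.g.\ via its $2^\nd-1$ dyadic subcubes). The paper's route is shorter and avoids these technicalities; yours has the virtue of producing an element of $\mps\setminus c$ whose $\mps$-norm is realised uniformly over all scales rather than only along a subsequence, which can be useful in other contexts.
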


\begin{proof}
In case of (i), the assumption leads to $\varphi(2^k) \sim 2^{k\frac{\nd}{p}}$, $k\in\no$, which implies $\mps = \ell_p$ (in the sense of equivalent norms) and \eqref{ddh-2} completes the argument.

We deal with (ii). Consider first the constant sequence $\lambda^1=\{1\}_{k\in\Zn}\in c$. Then 
	\begin{align*}
	\|\lambda^1|\mps\| = & 
                               \sup_{j\in\no; m\in \Zn} \varphi((Q_{-j,m})) |Q_{-j,m}|^{-\frac{1}{p}} |Q_{-j,m}|^{\frac1p} \\
          = & 	\sup_{j\in\no} \varphi((Q_{-j,m})) = \sup_{j\in\no} \varphi(2^{j}) = \infty,
	\end{align*}
	which disproves $c\subset \mps$ (and simultaneously strengthens Proposition~\ref{Prop-1-mps}(iv) by $\mps\subsetneq \ell_\infty$ unless $\sup_{k\in\no} \varphi(2^k)<\infty$).

        Next we want to disprove $c_0\subset\mps$. By assumption, $\sup_{k\in\no} \varphi(2^k)=\infty$, so we can choose a strictly increasing subsequence $\{k_l\}_{l\in\nat}$ such that
        \[
          \lim_{l\to\infty} \varphi(2^{k_l}) = \infty.
          \]
          We may assume that $\varphi(2^{k_l})>0$, such that
          \[
            \lim_{l\to\infty} \varphi(2^{k_l})^{-1} = 0.
            \]
            Now we consider $\tilde{\lambda}= \{\tilde{\lambda}_m\}_{m\in\Zn}$ given by
            \[
\tilde{\lambda}_m= \begin{cases} \varphi(2^{k_l})^{-\frac12}, & |m|\sim 2^{k_l}, \ l\in\nat, \\ 0, & \text{otherwise}.\end{cases}
            \]
Then $\tilde{\lambda}\in c_0$. On the other hand,
	\[
	\|\tilde{\lambda}|\mps\| \geq c\ 
	\sup_{l\in\N} \varphi(2^{k_l}) |Q_{-k_l,0}|^{-\frac{1}{p}} \varphi(2^{k_l})^{-\frac12} |Q_{-k_l,0}|^{\frac1p} = 
	c \sup_{l\in\nat} \varphi(2^{k_l})^\frac12 = \infty,
	\]
	which gives $c_0\not\subset\mps$. 

Finally we present a sequence $\lambda\in\mps \setminus c \subsetneq \mps\setminus c_0$. Recall that $\ds \lim_{k\to\infty} 2^{-k\frac{\nd}{p}} \varphi(2^k) = 0$. Then 
we can construct  a strictly increasing  sequence $\{n_\ell\}_{\ell\in\nat}$, $n_{\ell}\in \no$, such that 
\begin{equation}\label{2.06-2}
	\varphi(2^{n_{\ell}})2^{-\frac{\nd n_{\ell}}{p}} \le {\ell^{-\frac{1}{p}}}, 
	\quad \ell\in  \N, 
\end{equation}
where we can take  $n_1=0$. We consider the special sequence $\lambda = \{\lambda_k\}_{k\in\Zn}$ given by,
	\begin{equation}\label{2.06-2a} 
	\lambda_k = \begin{cases} 1, & \text{if}\ k=(2^{n_\ell}, 0, \dots, 0)\ \text{for some}\  l\in\nat, \\ 0, & \text{otherwise}.
	\end{cases}
	\end{equation}
	Obviously $\lambda\not\in c$, in particular, $\lambda\not\in c_0$. Moreover, by construction,
	\begin{align*}
          \|\lambda|\mps\| & \leq c\ \sup_{l\in\nat} \varphi(2^{n_\ell}) |Q_{-n_\ell,0}|^{-\frac1p} \Big(\sum_{k:Q_{0,k}\subset Q_{-n_\ell,0}} \!\!\!|\lambda_k|^p\Big)^{\frac1p}\\
          &\leq c'\ \sup_{l\in\nat} \varphi(2^{n_\ell}) 2^{-{\frac{\nd n_{\ell}}{p}}} l^{\frac1p} < \infty.
	\end{align*}
	So the subspaces $c_0$, $c$ and $\ms$ of $\ell_\infty$ are incomparable in the above sense.
  \end{proof}

\begin{remark}
  Note that the above Lemma~\ref{mps-subset-c} obviously generalises our findings \eqref{ddh-1}, \eqref{ddh-2} in the special cases $\varphi(t) = t^{\nd/u}$ for $u\geq p$.   
\end{remark}

The next question we approach is the separability of $\mps$. This obviously depends on $\varphi$, as we already know that in case of $\varphi(t)=t^{\nd/p}$, $0<p<\infty$, the space $\mps=\ell_p$ is separable, while for $\varphi(t)=t^{\nd/u}$, $0<p<u<\infty$, the space $\ms$ is not separable, cf. \cite[Prop.~2.9]{HaSk_Mo_seq}. The same happens whenever $\sup_{k\in\no} \varphi(2^k)<\infty$, as then by Proposition~\ref{Prop-1-mps} we arrive at $\mps=\ell_\infty$ which is non-separable, too. So we may assume in the sequel that $ \sup_{k\in\no} \varphi(2^k)=\infty$.

\begin{proposition}
  Let $0<p<\infty$, {$\varphi\in\Gpd$} with $\varphi(1)=1$, and assume $\  \sup_{k\in\no} \varphi(2^k)=\infty$. Then $\mps$ is separable if, and only if, $\ds \lim_{k\to\infty} 2^{-k\frac{\nd}{p}} \varphi(2^k) >0$.
        \end{proposition}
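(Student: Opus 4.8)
The plan is to exploit the dichotomy already isolated in Lemma~\ref{mps-subset-c}. Recall that, by \eqref{Gp-def-disc}, the limit $\alpha:=\lim_{k\to\infty} 2^{-k\frac{\nd}{p}}\varphi(2^k)$ exists and lies in $[0,1]$; this number governs the qualitative behaviour of $\mps$, and I would prove the two implications separately according to whether $\alpha>0$ or $\alpha=0$.

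For the sufficiency ($\alpha>0\Rightarrow\mps$ separable) there is essentially nothing to do: when $\alpha>0$, Lemma~\ref{mps-subset-c}(i) gives $\mps=\ell_p$ in the sense of equivalent (quasi-)norms. Since $\ell_p$, $0<p<\infty$, is separable --- the finitely supported sequences whose coordinates have rational real and imaginary parts form a countable dense subset --- the same holds for $\mps$.

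For the necessity I would argue the contrapositive: assuming $\alpha=0$, I construct an uncountable, uniformly separated family in $\mps$, which precludes separability. Concretely, recall from the proof of Lemma~\ref{mps-subset-c}(ii) the strictly increasing sequence $\{n_\ell\}_{\ell\in\nat}\subset\no$ satisfying \eqref{2.06-2}, together with the positions $k_\ell:=(2^{n_\ell},0,\dots,0)\in\Zn$. For each subset $A\subseteq\nat$ set
\[
\lambda^A_k = \begin{cases} 1, & k=k_\ell\ \text{for some}\ \ell\in A,\\ 0, & \text{otherwise.}\end{cases}
\]
Each $\lambda^A$ is dominated entrywise by the sequence $\lambda=\lambda^{\nat}$ of \eqref{2.06-2a}, which was shown to belong to $\mps$; since $\|\cdot\,|\mps\|$ is a lattice quasi-norm (monotone under $|\mu_k|\le|\nu_k|$, directly from Definition~\ref{D-mps}), it follows that $\lambda^A\in\mps$ for every $A$.

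It remains to verify the separation. If $A\neq B$, choose $\ell$ in the symmetric difference; then $\lambda^A-\lambda^B$ has a coordinate equal to $\pm 1$ at $k_\ell$, so $\|\lambda^A-\lambda^B\,|\ell_\infty\|\ge 1$, and Proposition~\ref{Prop-1-mps}(iv) yields $\|\lambda^A-\lambda^B\,|\mps\|\ge 1$. Thus $\{\lambda^A:A\subseteq\nat\}$ is an uncountable $1$-separated subset of $\mps$: the open balls of radius $\tfrac12$ about its members are pairwise disjoint, so no countable set can be dense, and $\mps$ is not separable. The only substantive input is that $\lambda^{\nat}\in\mps$, which is exactly the estimate already carried out in Lemma~\ref{mps-subset-c}(ii) and which uses $\alpha=0$ through \eqref{2.06-2}; everything else is the standard ``uncountable separated set'' obstruction. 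I therefore do not expect a genuine obstacle beyond correctly reusing that norm estimate and invoking the monotonicity of the quasi-norm.
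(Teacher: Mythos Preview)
Your proposal is correct and follows essentially the same approach as the paper: the sufficiency via $\mps=\ell_p$ from Lemma~\ref{mps-subset-c}(i), and the necessity via the uncountable $1$-separated family $\{\lambda^A:A\subseteq\nat\}$ built on the positions $(2^{n_\ell},0,\dots,0)$ from \eqref{2.06-2}. The only cosmetic differences are that you justify $\lambda^A\in\mps$ through lattice monotonicity of the quasi-norm and the separation via Proposition~\ref{Prop-1-mps}(iv), whereas the paper states both facts directly; the underlying mechanism is the same.
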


\begin{proof}
 Assume first that $\ds \lim_{k\to\infty} 2^{-k\frac{\nd}{p}} \varphi(2^k) >0$. Then it follows in the same way as in Lemma~\ref{mps-subset-c}(i) that $\mps=\ell_p$ which is well-known to be separable.
  
Now assume that $\ds \lim_{k\to\infty} 2^{-k\frac{\nd}{p}} \varphi(2^k) = 0$, and recall the subsequence given by \eqref{2.06-2}. Let $E$ be a subset of $\N$. We consider the following sequences $\lambda^{(E)}$ defined by 
	\begin{equation}\nonumber
	\lambda^{(E)}_k = 
	 \begin{cases}
           1 & \text{if}\qquad k=(2^{n_\ell},0,\ldots , 0) \ \text{and}\ {\ell \in E},  \\
           0 & \text{otherwise}. 	 
	 \end{cases}
	\end{equation}
	It should be clear that 
	\[ \|\lambda^{(E)}|\mps\| \le  \|\lambda^{(\N)}|\mps\|.\]
	So all the sequences belong  to $\mps(\Zn)$ if $\lambda^{(\N)}\in \mps(\Zn)$. For any cube $Q_{-j,m}$ we have by construction,
	\begin{align*}
    \varphi(2^j) |Q_{-j,m}|^{-\frac{1}{p}} \Big(\sum_{k:Q_{0,k}\subset Q_{-j,m}} |\lambda^{(\N)}_k|^p \Big)^\frac{1}{p} 
    &\le  \varphi(2^j)	2^{-j\frac{\nd}{p}} \Big(\sum_{k:Q_{0,k}\subset Q_{-j,0}} |\lambda^{(\N)}_k|^p \Big)^\frac{1}{p} \\
    &\le \sup_{l\in\nat} \varphi(2^{n_\ell}) 2^{-n_\ell\frac{\nd}{p}} \ell^\frac{1}{p} \le C <\infty. 
	\end{align*}
	Hence $\lambda^{(\N)}\in \mps(\Zn)$. If $E$ and $F$ are different subsets of $\N$, then 
	\[ \|\lambda^{(E)}- \lambda^{(F)}|\mps\| \ge 1 . \] 
	Thus $\mps(\Zn)$ contains a non-countable set of sequences such that the distance between two different elements of it is at least $1$.    
\end{proof}

Next we define a closed proper subspace of $\mps$ as follows. Let $\mps^{00}=\mps^{00}(\Zn)$ be the closure of $c_{00}$ in $\mps$,
\[
\mps^{00} = \overline{c_{00}}^{\|\cdot|\mps\|}.
\]
Obviously $\mps^{00}$ is separable. Moreover, let us denote by $\mps^0= \mps^0(\Zn)$ the subspace of null sequences which belong to $\mps$,
\[
\mps^0= \ms  \cap c_0\ . 
\]
Then we have the following basic properties.

\begin{corollary}
	Let $0<p<\infty$, {$\varphi\in\Gpd$} with $\varphi(1)=1$, and $ \sup_{k\in\no} \varphi(2^k)=\infty$.
	The following conditions are equivalent:
	\begin{enumerate}[\upshape\bfseries (i)]
		\item
		$\ds \lim_{k\to\infty} 2^{-k\frac{\nd}{p}} \varphi(2^k) >0$,
		\item $\mps \hookrightarrow c_0$,
		\item $\mps = \ell_p$,
		\item $\mps^{00} =\mps$
	\end{enumerate}
\end{corollary}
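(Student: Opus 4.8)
The plan is to prove the four conditions equivalent by closing the cycle $\text{(i)}\Rightarrow\text{(iii)}\Rightarrow\text{(ii)}\Rightarrow\text{(i)}$ and then adding the separate equivalence $\text{(iii)}\Leftrightarrow\text{(iv)}$. Almost all the substance is already contained in Lemma~\ref{mps-subset-c} and Proposition~\ref{Prop-1-mps}, so the corollary is mostly a matter of assembling these.

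First I would treat $\text{(i)}\Rightarrow\text{(iii)}$, which is precisely the computation of Lemma~\ref{mps-subset-c}(i): since $\varphi\in\Gpd$ makes $\{2^{-k\nd/p}\varphi(2^k)\}_{k\in\no}$ nonincreasing with value $\varphi(1)=1$ at $k=0$, a strictly positive limit squeezes the whole sequence between that limit and $1$, giving $\varphi(2^k)\sim 2^{k\nd/p}$ and hence $\mps=\ell_p$ up to equivalent norms. Then $\text{(iii)}\Rightarrow\text{(ii)}$ is immediate from the classical continuous embedding $\ell_p\hookrightarrow c_0$ recorded in \eqref{ddh-2}. For $\text{(ii)}\Rightarrow\text{(i)}$ I would argue contrapositively: if the limit in (i) is $0$, then Lemma~\ref{mps-subset-c}(ii) yields in particular $\mps\not\subset c_0$, so even set containment fails and the embedding in (ii) is impossible. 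This establishes $\text{(i)}\Leftrightarrow\text{(ii)}\Leftrightarrow\text{(iii)}$.

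For the last equivalence, $\text{(iii)}\Rightarrow\text{(iv)}$ follows from the density of $c_{00}$ in $\ell_p$ for $p<\infty$: once $\mps=\ell_p$ with equivalent norms, $\mps^{00}=\overline{c_{00}}^{\|\cdot|\mps\|}=\ell_p=\mps$. The converse $\text{(iv)}\Rightarrow\text{(iii)}$ rests on the structural observation $\mps^{00}\subseteq c_0$. Indeed, by Proposition~\ref{Prop-1-mps}(iv) (with $\varphi(1)=1$) one has $\|\cdot|\ell_\infty\|\le\|\cdot|\mps\|$, so $\mps$-convergence forces $\ell_\infty$-convergence; as $c_{00}\subset c_0$ and $c_0$ is closed in $\ell_\infty$, any $\mps$-limit of finite sequences already lies in $c_0$. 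Consequently, if (iii) fails then by $\text{(i)}\Leftrightarrow\text{(iii)}$ the limit in (i) is $0$, and Lemma~\ref{mps-subset-c}(ii) provides some $\lambda\in\mps\setminus c_0$; since $\lambda\notin c_0$ while $\mps^{00}\subseteq c_0$, we get $\lambda\notin\mps^{00}$, so $\mps^{00}\subsetneq\mps$ and (iv) fails.

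The only genuinely new ingredient beyond Lemma~\ref{mps-subset-c} is the inclusion $\mps^{00}\subseteq c_0$, which upgrades the purely set-theoretic incomparability $\mps\not\subset c_0$ into the failure of density; I expect no real difficulty there, since it uses only $\mps\hookrightarrow\ell_\infty$ and the closedness of $c_0$. The main point to watch is bookkeeping: all norm comparisons are up to equivalence, and the standing hypothesis $\sup_{k\in\no}\varphi(2^k)=\infty$ must remain in force throughout so that the dichotomy of Lemma~\ref{mps-subset-c} and the exclusion of the trivial case $\mps=\ell_\infty$ apply.
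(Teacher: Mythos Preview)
Your proof is correct and follows essentially the same route as the paper: the equivalence of (i)--(iii) is read off Lemma~\ref{mps-subset-c}, (iii)$\Rightarrow$(iv) comes from the density of $c_{00}$ in $\ell_p$, and the failure of (iv) when (i) fails is deduced from the sequence of Lemma~\ref{mps-subset-c}(ii) lying in $\mps\setminus c_0$. The only expository difference is that you make the inclusion $\mps^{00}\subseteq c_0$ explicit (via $\mps\hookrightarrow\ell_\infty$ and the $\ell_\infty$-closedness of $c_0$), whereas the paper leaves this implicit when asserting that the sequence \eqref{2.06-2a} is not in $\mps^{00}$.
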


\begin{proof}
While the equivalence of the first three assertions follows from Lemma \ref{mps-subset-c}, the fourth statement is an immediate consequence of  the third one. So it remains to show that it implies the first assertion. If (i) does not hold, then   	$\ds \lim_{k\to\infty} 2^{-k\frac{\nd}{p}} \varphi(2^k) = 0$ and the sequence \eqref{2.06-2a} belongs $\mps$, but not to $\mps^{00}$. This contradicts (iv).
\end{proof}

We can also give the counterpart of \cite[Lemma~4.6]{HaSk_Mo_seq}.

\begin{lemma}
	Assume $0<p<\infty$, {$\varphi\in\Gpd$} with $\varphi(1)=1$, $ \sup_{k\in\no} \varphi(2^k)=\infty$, and $\ds \lim_{k\to\infty} 2^{-k\frac{\nd}{p}} \varphi(2^k) =0$. Then $\mps^0$ and $\mps^{00}$ are proper closed subspaces of $\mps$, with 
	\[ \mps^{00}\subsetneq\mps^0 \subsetneq \mps . \]
\end{lemma}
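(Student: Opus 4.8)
The plan is to split the chain $\mps^{00}\subsetneq\mps^0\subsetneq\mps$ into three essentially independent tasks: the closedness of both subspaces, the strict inclusion $\mps^0\subsetneq\mps$, and the strict inclusion $\mps^{00}\subsetneq\mps^0$. The first two are routine and rest on earlier results; the last one carries the whole weight of the statement.

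First the structural facts. Since $\mps\hra\ell_\infty$ by Proposition~\ref{Prop-1-mps}(iv), convergence in $\mps$ forces convergence in $\ell_\infty$, and as $c_0$ is closed in $\ell_\infty$ the set $\mps^0=\mps\cap c_0$ is closed in $\mps$; the space $\mps^{00}$ is closed by definition. From $c_{00}\subset c_0$ and the closedness of $\mps^0$ we immediately get $\mps^{00}\subseteq\mps^0$. For $\mps^0\subsetneq\mps$ it suffices to name one non-null element of $\mps$: the sequence \eqref{2.06-2a}, which in the proof of Lemma~\ref{mps-subset-c}(ii) was shown to lie in $\mps\setminus c$, serves, since $c_0\subset c$.

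Next I record the working characterisation of $\mps^{00}$. Restricting a sequence to any index subset does not increase its $\mps$-norm (immediate from Definition~\ref{D-mps}), and consequently $\lambda\in\mps^{00}$ if and only if $\|\lambda\,\chi_{\{|k|>R\}}\mid\mps\|\to0$ as $R\to\infty$. Thus, to separate $\mps^{00}$ from $\mps^0$, it is enough to exhibit $\mu\in\mps\cap c_0$ whose tails do not vanish in norm. To this end write $g(k)=\varphi(2^k)2^{-k\nd/p}$; by hypothesis $g$ is nonincreasing with $g(k)\to0$, while $\varphi(2^k)\to\infty$. I would choose inductively a strictly increasing $\{k_l\}\subset\no$ with $g(k_l)^{-p}\ge 2\,g(k_{l-1})^{-p}$ (possible since $g\to0$) together with pairwise disjoint cubes $Q_{-k_l,m_l}$, $|m_l|\to\infty$, and set $\mu_k=\varphi(2^{k_l})^{-1}$ if $Q_{0,k}\subset Q_{-k_l,m_l}$ and $\mu_k=0$ otherwise. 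Since $\varphi(2^{k_l})\to\infty$ and the blocks escape to infinity, $\mu\in c_0$. For the tails: given $R$, pick $l$ with $Q_{-k_l,m_l}\subset\{|k|>R\}$; evaluating the $\mps$-norm of $\mu\,\chi_{\{|k|>R\}}$ on $Q_{-k_l,m_l}$ yields exactly $\varphi(2^{k_l})\,\varphi(2^{k_l})^{-1}=1$, so the tails do not vanish and $\mu\notin\mps^{00}$.

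The main obstacle is to verify that $\|\mu\mid\mps\|<\infty$, because the blocks deliberately carry non-negligible mass at arbitrarily large scales. Fixing a dyadic cube $Q_{-j,m}$, if it lies inside a single block its contribution is $\varphi(2^j)\varphi(2^{k_l})^{-1}\le1$ (as $\varphi$ is nondecreasing and $j\le k_l$); otherwise it contains a family $S$ of whole blocks, all with $k_l\le j$, and a direct computation reduces the $p$-th power of its contribution to $g(j)^p\sum_{l\in S}g(k_l)^{-p}$, using $\varphi(2^{k_l})^{-p}2^{k_l\nd}=g(k_l)^{-p}$. The geometric growth then gives $\sum_{l\in S}g(k_l)^{-p}\le 2\max_{l\in S}g(k_l)^{-p}\le 2\,g(j)^{-p}$, the last inequality because $\max_{l\in S}k_l\le j$ and $g$ is nonincreasing; hence every contribution is at most $2^{1/p}$ and $\mu\in\mps$. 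This settles $\mu\in\mps^0\setminus\mps^{00}$, so $\mps^{00}\subsetneq\mps^0\subsetneq\mps$ with all three spaces as required; the delicate step is precisely the balancing of the accumulated masses against the normalising factor $g(j)$, which is what the geometric choice of the scales $k_l$ is designed to guarantee.
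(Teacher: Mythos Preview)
Your proof is correct and follows the same overall architecture as the paper's: the closedness arguments and the strict inclusion $\mps^0\subsetneq\mps$ are handled identically, and for $\mps^{00}\subsetneq\mps^0$ both you and the paper build a $c_0$-sequence taking the constant value $\varphi(2^{k_l})^{-1}$ on a disjoint family of dyadic blocks $Q_{-k_l,m_l}$, so that each block alone already contributes $1$ to the $\mps$-norm.

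The one genuine difference is in the device used to control the accumulated mass when a test cube swallows several blocks. The paper uses blocks of \emph{every} size $j\in\no$ and compensates by spacing them far apart: the inductive condition \eqref{13_09_2} guarantees that any cube large enough to contain the first $J$ blocks has $g(\mu)^p\sum_{\ell\le J}g(\ell)^{-p}$ bounded. You instead thin out the scales, keeping only $k_l$ with $g(k_l)^{-p}\ge 2\,g(k_{l-1})^{-p}$, so that the accumulated sum is always dominated by twice its largest term; the spacing of the blocks then plays no role beyond disjointness. Your variant is a bit cleaner and shortens the case analysis (the paper needs a separate sub-case depending on whether $\mu\le\nu_{J-1}$ or $\mu>\nu_{J-1}$), at the modest cost of invoking the tail characterisation of $\mps^{00}$, which the paper avoids by noting directly that its $\lambda\notin\mps^{00}$.
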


\begin{proof}
	By definition, $\mps^{00}$ is a closed subspace of $\mps$. The fact that $\mps^0 \subsetneq \mps$ is a proper subspace of $\mps$ follows from Proposition~\ref{Prop-1-mps}. We show that $\mps^0$ is closed in $\mps$,
	\[
	\mps^0 = \overline{\mps^0}^{\|\cdot|\mps\|}.
	\]
	Clearly $\mps^0 \subseteq \overline{\mps^0}^{\|\cdot|\mps\|}$, so we have to verify the converse inclusion. Let $\lambda=\{\lambda_k\}_{k\in\Zn}\in\overline{\mps^0}^{\|\cdot|\mps\|}$ and $\ve>0$ be arbitrary. Then, by definition, there exists some $\mu=\{\mu_k\}_{k\in\Zn}\in\mps^0$ such that
	\begin{equation}\label{mps-12}
	\|\mu-\lambda|\mps\| = \sup_{j\in\no, m\in\Zn} \varphi(2^j) |Q_{-j,m}|^{-\frac1p} \Big(\sum_{k:Q_{0,k}\subset Q_{-j,m}} \!\!\!|\mu_k-\lambda_k|^p\Big)^{\frac1p} < \ve.
	\end{equation}
	Now $\mu\in\mps^0\subset \mps$ and $\mps$ is complete, thus $\lambda\in\mps$. Moreover, applying \eqref{mps-12} with $j=0$ implies that 
	\[
	\|\mu-\lambda|\ell_\infty\| = \sup_{k\in\Zn} |\mu_k-\lambda_k|<\ve.
	\]
	However, $\mu\in\mps^0 \subset c_0$ thus leads to $\lambda\in c_0$. So finally $\lambda\in \mps \cap c_0 = \mps^0$.

	It remains to verify that $\mps^{00}\subsetneq\mps^0$. First note that, by definition, $\mps^{00} \subseteq \mps^0$.  
	 {To prove that $\mps^{00}\not=\mps^0$ we consider} special lattice points $m_j=(2^{n_j},0,\ldots , 0)\in\Zn$, $j\in\no$ and $n_j\in\no$, that will be specified below. If the cubes $ Q_{-j,m_j}$ are pairwise disjoint, then we can  put
	\begin{equation}\label{13_09_1}
	\lambda_{k} = \begin{cases}
		\varphi(2^j)^{-1} & \text{if}\quad Q_{0,k}\subset Q_{-j,m_j}, \\
		0 & \text{otherwise}.
	\end{cases}
	\end{equation}
	We define the points $m_j$ by induction. We put $n_0=1$. Assume that $n_{j}$ has already been defined. By assumptions  $\ds \lim_{k\to\infty} 2^{k\frac{\nd}{p}} \varphi(2^k)^{-1} =\infty$, so we can always find $\nu_j\in \no$ such that 
	\begin{equation}\label{13_09_2}
		\left(\sum_{\ell=0}^j \varphi(2^\ell)^{-p}{2^{\ell d}} \right)^\frac{1}{p} \le 2^{\nu_j\frac{\nd}{p}} \varphi(2^{\nu_j})^{-1}.
	\end{equation}  
	{We may assume that the sequence $\nu_j$ is strictly increasing and that $\nu_j>j$ for any $j\in \no$. Let us put $n_{j+1}= n_j+\nu_j$.} The cubes  $ Q_{-j,m_j}$ are pairwise disjoint {since $2^{n_{j+1}}-2^{n_{j}}= 2^{n_j}(2^{\nu_j}-1)\ge 2^{n_j}2^{j}$.  So} the sequence \eqref{13_09_1} is well-defined. Moreover $n_j\rightarrow\infty$ for $j\to\infty$, so  $\lambda\in c_0$. 	It remains to prove that $\lambda\in \mps$ {since $\lambda\notin \mps^{00}$ should be  obvious.} 
	
        {In order to estimate the norm of $\lambda$ in $\mps$, we should consider all dyadic  cubes $Q_{-\mu,m}$, $\mu\in \no$. However, only  those cubes that have a non-empty intersection with some of the cubes  $Q_{-j,m_j}$ give some contribution.  All the cubes  used in the construction are dyadic, therefore we have to consider two cases: $Q_{-\mu,m}$ is contained in one of the  cubes $Q_{-j,m_j}$, or $Q_{-\mu,m}$ contains a finite subfamily of these cubes.}
        
If $Q_{-\mu,m}\subset Q_{-j,m_j}$ for some $j$, then 
\begin{equation}\label{13_09_3}
	 \varphi(2^\mu) |Q_{-\mu,m}|^{-\frac1p} \Big(\sum_{k:Q_{0,k}\subset Q_{-\mu,m}} \!\!\!|\lambda_k|^p\Big)^{\frac1p} \le \varphi(2^\mu) \varphi(2^j)^{-1}\le 1,   
\end{equation}
{since $\mu\le j$ and $\varphi$ is non-decreasing. In the other case $Q_{-\mu,m}$ contains finitely many  dyadic cubes $Q_{-j,m_j}$.}  Let $J$ denote the largest $j$ such that $Q_{-j,m_j}\subset Q_{-\mu,m}$. If $\mu\le \nu_{J-1}$, then the cube $Q_{-\mu,m}$ {contains only one of the cubes} $Q_{-j,m_j}$, i.e. $Q_{-J,m_J}$, since $2^{\nu_{J-1}}\le 2^J+(2^{n_J}-2^{n_{J-1}})$. In that case 
\begin{equation}\label{13_09_4}
{ \varphi(2^\mu) |Q_{-\mu,m}|^{-\frac1p} \Big(\sum_{k:Q_{0,k}\subset Q_{-\mu,m}} \!\!\!|\lambda_k|^p\Big)^{\frac1p} \le 
 \frac{\varphi(2^\mu)|Q_{-J,m_J}|^{\frac1p}}{\varphi(2^J)|Q_{-\mu,m}|^{-\frac1p}} \le 1 }
\end{equation}
since $\mu>J$, recall \eqref{Gp-def-disc}. If $\mu> \nu_{J-1}$, then  
\begin{align}
  \lefteqn{ \varphi(2^\mu) |Q_{-\mu,m}|^{-\frac1p} \Big(\sum_{k:Q_{0,k}\subset Q_{-\mu,m}} \!\!\!|\lambda_k|^p\Big)^{\frac1p}}\nonumber\\
  & \le 
    \varphi(2^\mu) |Q_{-\mu,m}|^{-\frac1p} \left(\sum_{\ell=0}^J \varphi(2^\ell)^{-p}2^{jd} \right)^\frac{1}{p} \nonumber  \\
&\leq  \varphi(2^\mu) |Q_{-\mu,m}|^{-\frac1p} \left(2^{\nu_{J-1}\frac{\nd}{p}} \varphi(2^{\nu_{J-1}})^{-1}+\frac{|Q_{-J,m_J}|^{\frac1p}}{\varphi(2^J)}\right)\le 2, \label{13_09_5}
\end{align}
where the second inequality follows from \eqref{13_09_2} and we finally apply \eqref{Gp-def-disc} again. Now we get  from \eqref{13_09_2}-\eqref{13_09_5} that $\lambda\in\mps$,  but obviously $\lambda\notin \mps^{00}$. 
\end{proof}

\begin{remark}
  	The above result sheds some further light on the difference of the two norms $\|\cdot|\ell_\infty\| $ and $\|\cdot|\mps\|$, since in the classical setting 
	\[ \overline{c_{00}}^{\|\cdot|\ell_\infty\|} = c_0
	\]
	is well-known, in contrast to $\mps^{00} \subsetneq \mps^0$. This generalises our results from \cite{HaSk_Mo_seq} in case of $\varphi(t) \sim t^{\nd/u}$, $0<p\leq u<\infty$.
\end{remark}

        \section{Embeddings}\label{sec-2}
        We prove our main result about embeddings of different generalised Morrey sequence spaces. Here we also use and adapt some ideas of our paper \cite{HaSk-bm1}, cf. the proof of Theorem 3.2 there. A similar construction was used by P.~Olsen in \cite{Olsen95}, cf. the proof of Theorem 10 in \cite{Olsen95}.

We study the embedding 
\[
  \mpse\hookrightarrow \mpsz
\]
for arbitrary $0<p_i<\infty$, $\varphi_i\in\Gpx{p_i}(\mathbb{D})$, $i=1,2$.

Note first that in case of $\sup_{k\in\no} \varphi_2(2^k)<\infty$ we get $\mpsz=\ell_\infty$, cf. Prop.~\ref{Prop-1-mps}(v), and thus $\mpse\hookrightarrow \ell_\infty$ without further assumption on $p_1$, $\varphi_1$, recall Prop.~\ref{Prop-1-mps}(iv). So the condition $\sup_{k\in\no} \varphi_2(2^k)<\infty$ is sufficient for the considered embedding.

On the other hand, if we assume  $\sup_{k\in\no} \varphi_1(2^k)<\infty$ and the embedding to hold, we arrive at $\ell_\infty = \mpse \hookrightarrow \mpsz \hookrightarrow \ell_\infty$, that is, $\mpse=\ell_\infty = \mpsz$, and thus again $\sup_{k\in\no} \varphi_2(2^k)<\infty$ by Corollary~\ref{linf-mps}.

So we may and shall assume in the sequel that $\sup_{k\in\no} \varphi_i(2^k)=\infty$, $i=1,2$.

\begin{theorem}  \label{cont}
	Let    $0<p_i<\infty$ and  $\varphi_i\in\Gpx{p_i}(\mathbb{D})$, with $\varphi_i(1)=1$, $i=1,2$. Let $\varrho=\min\{1,\frac{p_1}{p_2}\}$.  Assume $\sup_{k\in\no} \varphi_i(2^k)=\infty$, $i=1,2$. Then the embedding 
	\begin{equation} \label{mps1cont}
	\mpse\hookrightarrow \mpsz
	\end{equation}
	is continuous  if, and only if, 
	the following condition holds,
\begin{equation}\label{cont-mps}
	\sup_{j\in \No} \frac{\varphi_2(2^{j})}{\varphi_1(2^j)^\varrho}<\infty.
\end{equation}
	The embedding \eqref{mps1cont} is never compact.
\end{theorem}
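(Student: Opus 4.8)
The plan is to disprove compactness by exhibiting a bounded sequence in $\mpse$ whose image in $\mpsz$ is $1$-separated, so that no subsequence can converge. The construction uses single unit masses placed at lattice points running off to infinity, and the decisive observation is that such a mass has norm exactly $1$ in $\mps$ for \emph{any} admissible $\varphi$, so the estimates will be insensitive to the relation between $\varphi_1$ and $\varphi_2$.

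First I would compute the norm of a unit mass. For $k^\ast\in\Zn$ let $e^{k^\ast}=\{e^{k^\ast}_k\}_{k\in\Zn}$ be given by $e^{k^\ast}_{k^\ast}=1$ and $e^{k^\ast}_k=0$ otherwise. In Definition~\ref{D-mps} the only dyadic cubes $Q_{-j,m}$ contributing to $\|e^{k^\ast}|\mps\|$ are those containing $Q_{0,k^\ast}$, and for each $j\in\No$ there is exactly one such cube, which yields the value $\varphi(2^j)\,2^{-j\nd/p}$. Since $\varphi\in\Gpd$ with $\varphi(1)=1$, the sequence $\{\varphi(2^j)2^{-j\nd/p}\}_{j\in\No}$ is nonincreasing and equals $1$ at $j=0$, recall \eqref{Gp-def-disc}; hence $\|e^{k^\ast}|\mps\|=\varphi(1)=1$. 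Applying this to $\varphi_1\in\Gpx{p_1}(\mathbb{D})$ and to $\varphi_2\in\Gpx{p_2}(\mathbb{D})$ separately gives $\|e^{k^\ast}|\mpse\|=\|e^{k^\ast}|\mpsz\|=1$ for every $k^\ast\in\Zn$.

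Next I would fix an infinite family of distinct lattice points, say $k^{(n)}=(n,0,\dots,0)$ for $n\in\N$, and set $\lambda^{(n)}=e^{k^{(n)}}$. By the previous step these all lie in the closed unit ball of $\mpse$, so they form a bounded sequence there. For $n\neq m$ the difference $\lambda^{(n)}-\lambda^{(m)}$ equals $1$ on $Q_{0,k^{(n)}}$, so evaluating the $\mpsz$-quasi-norm on the single unit cube $Q_{0,k^{(n)}}$ already gives $\|\lambda^{(n)}-\lambda^{(m)}|\mpsz\|\geq\varphi_2(1)=1$. Consequently no subsequence of $\{\lambda^{(n)}\}_n$ is Cauchy in $\mpsz$, the image of the unit ball of $\mpse$ under the embedding fails to be precompact, and the embedding is therefore not compact.

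The argument is essentially elementary, and the only point requiring care is the exact evaluation $\|e^{k^\ast}|\mps\|=1$, which rests on the normalisation $\varphi(1)=1$ together with the monotonicity of $\{\varphi(2^j)2^{-j\nd/p}\}_j$ encoded in $\Gpd$. I would emphasise that all the estimates are independent of the interplay between $\varphi_1$ and $\varphi_2$, which is precisely why non-compactness holds without any hypothesis beyond those ensuring that both spaces are defined.
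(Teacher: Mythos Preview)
Your proposal addresses only the final sentence of the theorem (non-compactness) and leaves the main assertion --- the characterisation \eqref{cont-mps} of continuity --- entirely untreated. The theorem has three parts: sufficiency of \eqref{cont-mps} for the embedding, necessity of \eqref{cont-mps}, and non-compactness. The paper devotes Step~1 to sufficiency (splitting into $p_1\ge p_2$ via H\"older and monotonicity, and $p_1<p_2$ via an interpolation-type inequality with $\ell_\infty$) and Step~2 to necessity (constructing, for each $j$, a sequence supported in $Q_{-j,0}$ with roughly $\lfloor 2^{dj}\varphi_1(2^j)^{-p_1}\rfloor$ unit entries distributed so that the $\mpse$-norm stays bounded while the $\mpsz$-norm blows up along a subsequence violating \eqref{cont-mps}). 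None of this appears in your write-up, so as a proof of the stated theorem the proposal is incomplete.

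For the non-compactness part itself your argument is correct and slightly more direct than the paper's. You exhibit an explicit $1$-separated bounded family of unit masses, using Example~\ref{ex-rev} (the case $k_0=0$) to get $\|e^{k^\ast}\mid\mps\|=\varphi(1)=1$; this is self-contained and independent of any relation between $\varphi_1$ and $\varphi_2$. The paper instead factors the embedding as $\ell_{p_1}\hookrightarrow\mpse\hookrightarrow\mpsz\hookrightarrow\ell_\infty$ (Corollary~\ref{lp-mps} and Proposition~\ref{Prop-1-mps}(iv)) and invokes the known non-compactness of $\ell_{p_1}\hookrightarrow\ell_\infty$. Your route avoids that external fact at the cost of a short explicit computation; both are valid and roughly equally short.
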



\begin{proof}
	\noindent{\em Step 1}.~ First we prove the sufficiency of the conditions. 
	Let us assume that $\varrho=1$, i.e., $p_1\ge p_2$.   If $\varphi_2= c\,\varphi_1$ for some positive constant $c>0$, then  by \eqref{cont-mps}  the result follows by the same argument as in Prop.~\ref{Prop-1-mps}(ii). In the general case we have $\varphi_2 \le  c\,\varphi_1$ for some  $c>0$.  Let  $\widetilde{\varphi}_2= c\,\varphi_1$. Then 
	\[ m_{\varphi_1,p_1} \hookrightarrow m_{\widetilde{\varphi}_2,p_2} \hookrightarrow \mpsz , \]
	where the last embedding follows from Prop.~\ref{Prop-1-mps}(iii).  \\	 
	Let now $p_1 < p_2 $. 
	Assume first  $\varphi_2(2^j)^{p_2}=c\,\varphi_1(2^j)^{p_1}$, $c>0$. 
	Then for any  $j\in\no$ and $m\in\Zn$ we get 
	\begin{align*}
	\varphi_2(2^j) 2^{-j\frac{\nd}{p_2}} & \Big(\sum_{k:Q_{0,k}\subset Q_{-j,m}}\!\!\!|\lambda_{k}|^{p_2}\Big)^\frac{1}{p_2} \\
	 \le  c\,  &\left[\varphi_1(2^j) 2^{-j\frac{\nd}{p_1}}\Big(\sum_{k:Q_{0,k}\subset Q_{-j,m}}\!\!\!|\lambda_{k}|^{p_1}\Big)^\frac{1}{p_1} \right]^\frac{p_1}{p_2} \sup_{k:Q_{0,k}\subset Q_{-j,m}}|\lambda_{k}|^{1-\frac{p_1}{p_2}}.	
	\end{align*}  
	The last inequality implies that
	\begin{align}
	\|\lambda|\mpsz\| \le c\,  \|\lambda|\mpse\|^{\frac{p_1}{p_2}}\, \|\lambda |\ell_\infty\|^{1-\frac{p_1}{p_2}} \le  \,c\, \|\lambda|\mpse\|,
	\label{mps-2}
	\end{align}
by Prop.~\ref{Prop-1-mps}(iv).
If $\varphi_2(2^j)^{p_2}\le c\varphi_1(2^j)^{p_1}$, then choose $\tilde{\varphi}_1 := c^{-\frac{1}{p_1}}\varphi_2^{\frac{p_2}{p_1}}<\varphi_1$. Note that $\tilde{\varphi}_1\in\Gpx{p_1}(\mathbb{D})$.  Then by the preceding argument, $m_{\tilde{\varphi}_1,p_1} \hookrightarrow \mpsz$, while Prop.~\ref{Prop-1-mps}(iii)
leads to $\mpse \hookrightarrow m_{\tilde{\varphi}_1,p_1}$.\\


	\noindent{\em Step 2}.~   We consider the  necessity of the conditions. Assume that 
	the  condition  \eqref{cont-mps} is not satisfied. Then there exists an increasing  subsequence of positive integers $\{j_\ell\}_{\ell\in\nat}$ such that 
	\begin{equation}\label{neces1}
		\ell\, \varphi_1(2^{j_\ell})^\varrho <  \varphi_2(2^{j_\ell}), \qquad \ell\in\nat. 
	\end{equation}
	
	{\em Substep 2.1.}~ First we assume that  $p_1\ge p_2$, that is, $\varrho=1$, and that the  condition  \eqref{cont-mps} is not satisfied. Then there exists an increasing  subsequence of positive integers $\{j_\ell\}_{\ell\in\nat}$ such that $\ell\, \varphi_1(2^{j_\ell}) <  \varphi_2(2^{j_\ell})$, $\ell\in\nat$.
     Let $\lambda=(\lambda_k)_k$ be the sequence given by the formulae \eqref{13_09_1} with $\varphi=\varphi_1$ and $p=p_1$. Then $\|\lambda|\mpse\|<\infty$, but 
 on the other hand,  
 \begin{align*}
   \sup_{\ell\in\nat} \varphi_2(2^{j_\ell}) |Q_{-j_\ell,m_\ell}|^{-\frac{1}{p_2}} & \Big(\sum_{k:Q_{0,k}\subset Q_{-j_\ell,m_\ell}}\!\!\! |\lambda_k|^{p_2}\Big)^{1/p_2}\\
   & \ =\ \sup_{\ell\in\nat}\, \varphi_2(2^{j_\ell}) \varphi_1(2^{j_\ell})^{-1} = \infty ,\end{align*}
in view of \eqref{neces1}. So $\{\lambda_k\}_k$ does not belong to $\mpsz$. \\
	
	{\em Substep 2.2.}~ Now we assume that  $p_1 < p_2$.  For any $j\in \N$ we put 
	$$k_j = \whole{2^{dj}\varphi_1(2^j)^{-p_1}},$$
	recall $\whole{x}=\max\{l\in\Z: l \leq x\}$. Then $1\le k_j < 2^{dj}$ since $\varphi_1\in \Gpx{p_1}(\mathbb{D}) $. Moreover, one can easily check that   there is a constant $c_{p_1}$ such that 
	\begin{equation}\label{30.12-1}
	k_j \le \ c_{p_1}\ 2^{d(j - \nu)} k_\nu, \qquad \text{if}\qquad 1\le \nu < j \, .
	\end{equation}
	For convenience let us assume that $c_{p_1}= 1$ (otherwise the argument below has to be modified in an obvious way). 
	For any  $j\in \N$ we define a sequence $\lambda^{(j)} = \big\{\lambda^{(j)}_{m}\big\}_{m\in\Zn}$ in the following way. We assume that $k_j$ elements of the sequence  equal  $1$ and the rest is equal to  $0$.  If $Q_{0,m}\nsubseteq Q_{-j,0}$, then we put $\lambda^{(j)}_{m}= 0$. Moreover,  because of the inequality \eqref{30.12-1}, we can choose the elements that equal $1$ is such a way that the following property holds
	\begin{align*}  
	&\text{if}\quad Q_{-\nu,n}\subseteq Q_{-j,0} \quad \text{and} \quad Q_{-\nu,n} = \bigcup_{i=1}^{2^{d\nu}} Q_{0,m_i}, \\ 
	&\text{then at most}\; k_\nu\; \text{elements}\; \lambda^{(j)}_{{m_i}}\; \text{equal}\; 1.  
	\end{align*}
	By construction, if $Q_{-\nu,n}\subseteq Q_{-j,0}$, then  
	\begin{align}	
	\sum_{k:Q_{0,m}\subset
		Q_{-\nu,n}}\!\!\!|\lambda^{(j)}_{m}|^{p_1} \le k_\nu \le 2^{d\nu} \varphi_1(2^\nu)^{-p_1}
	\label{30.12-2}
	\end{align}
	and  the last sum is equal to $k_\nu$ if $\nu=j$. Thus 
	\begin{equation}\label{30.12-3}
	\|\lambda^{(j)}|\mpse\|\,\le \, 1 .
	\end{equation}
	Furthermore,  $\{2^{dj}\varphi_1(2^j)^{-p_1}\}_j$ is an increasing sequence since $\varphi_1\in \Gpx{p_1}(\mathbb{D})$. If this sequence is divergent to $\infty$, then $k_j\sim  2^{dj}\varphi_1(2^j)^{-p_1} \to\infty$ for $j\to\infty$. On the other hand, if the sequence converges to some $c\ge 1$, then $k_j\sim c$ and $2^{-dj}\sim \varphi_1(2^j)^{-p_1}$ for sufficiently large $j$. So, taking the sequence $\{j_\ell\}_\ell$ from \eqref{neces1} we get 
	\begin{align}		\nonumber
		\|\lambda^{(j_\ell)}|\mpsz\| &\ge\  \varphi_2(2^{j_\ell}) \left(\frac{1}{|Q_{-j_\ell,0}|}	\sum_{k:Q_{0,m}\subset
			Q_{-\nu,n}}\!\!\!|\lambda^{(j_\ell)}_{m}|^{p_2} 
                                               \right)^{1/p_2} \\
          & =  \varphi_2(2^{j_\ell}) \big( 2^{-dj_\ell} k_{j_\ell}\big)^{1/p_2}  \\  & \ge \ C  \frac{\varphi_2(2^{j_\ell})}{\varphi_1(2^{j_\ell})^{p_1/p_2}}\ge C\ell
\label{necs-2}
	\end{align}
	for sufficiently large $j_\ell$, cf. \eqref{neces1}.

	However, since we assume that the embedding \eqref{mps1cont} holds, there is a positive constant $c>0$ such that    
	$$
	\|\lambda^{(j)}|\mpsz\| \,\le \, c\, 	\|\lambda^{(j)}|\mpse\|\,\le \, c \qquad \text{for any}\qquad \lambda^{(j)}\in \mpse, \quad j\in\nat\, .
	$$
	In view of the last inequalities  \eqref{30.12-3} and \eqref{necs-2} we get 
	$$
C	\ell\, \le \, \|\lambda^{(j_\ell)}|\mpsz\|\,   \le c  \|\lambda^{(j_\ell)}|\mpse\|\, \leq \, c, 
	$$
	and this leads to a contradiction for large  $\ell$.\\

	\noindent{\em Step 3}.~ The non-compactness of \eqref{mps1cont} immediately follows from Proposition~\ref{Prop-1-mps}(iv) and Corollary~\ref{lp-mps},
	\[
	\ell_{p_1} \hookrightarrow \mpse \hookrightarrow \mpsz\hookrightarrow \ell_\infty
	\]
	and the non-compactness of $\ell_{p_1}\hookrightarrow \ell_\infty$.
\end{proof}

\begin{remark}
It is obvious that our considerations in front of Theorem~\ref{cont} fit well together with the result: if $\sup_{k\in\no} \varphi_2(2^k)<\infty$, then \eqref{cont-mps} is always satisfied since $\varphi_1\in\Gpx{p_1}(\mathbb{D})$ is increasing, so \eqref{cont-mps} is sufficient in that case. In case of $\sup_{k\in\no} \varphi_1(2^k)<\infty$ condition \eqref{cont-mps} can only be satisfied when $\sup_{k\in\no} \varphi_2(2^k)<\infty$, too, as discussed above.
\end{remark}

\begin{example}
  Let $0<p_i\leq u_i<\infty$ and assume that $\varphi_i(2^k) \sim 2^{k\nd/u_i}$, $k\in\no$, $i=1,2$. Then $\varphi_i\in\Gpx{p_i}(\mathbb{D})$, $i=1,2$, $\mpse = \mse$ and $\mpsz=\msz$. Theorem~\ref{cont} reads in this case as
  \[
    \mse\hookrightarrow \msz \qquad\text{if, and only if,}\qquad
\frac{u_1}{u_2} \leq \min\left\{1, \frac{p_1}{p_2}\right\},
  \]
  with the embedding being never compact. This coincides with our recent finding in \cite{HaSk-bm1}.
\end{example}

We collect some immediate consequences of Theorem~\ref{cont}. 

\begin{corollary}\label{Co-same}
Let    $0<p_i<\infty$, $\varphi_i\in\Gpx{p_i}(\mathbb{D})$, with $\varphi_i(1)=1$, and assume $\ \sup_{k\in\no} \varphi_i(2^k)=\infty$, $i=1,2$. Then $\mpse = \mpsz$ (in the sense of equivalent norms) if, and only if, $\varphi_1 \sim \varphi_2$ and $p_1=p_2$.
\end{corollary}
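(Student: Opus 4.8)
The plan is to deduce this entirely from Theorem~\ref{cont}, since the equality $\mpse=\mpsz$ in the sense of equivalent (quasi-)norms is precisely the assertion that both embeddings $\mpse\hookrightarrow\mpsz$ and $\mpsz\hookrightarrow\mpse$ are continuous. Thus the whole statement becomes a matter of feeding the two directions into the criterion \eqref{cont-mps} and reading off what they force on the parameters.

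For the sufficiency I would argue directly: if $p_1=p_2$ and $\varphi_1\sim\varphi_2$, then there are constants with $\varphi_1(2^j)\geq C\,\varphi_2(2^j)$ and $\varphi_2(2^j)\geq C'\,\varphi_1(2^j)$ for all $j\in\no$, so Prop.~\ref{Prop-1-mps}(iii) yields both inclusions together with the corresponding norm estimates, hence the equality with equivalent norms. (One need not even invoke Theorem~\ref{cont} here.)

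For the necessity, assume both embeddings hold and, without loss of generality, take $p_1\geq p_2$. Then the parameter $\varrho$ of Theorem~\ref{cont} for $\mpse\hookrightarrow\mpsz$ equals $\min\{1,p_1/p_2\}=1$, whereas for the reverse embedding $\mpsz\hookrightarrow\mpse$ (obtained by swapping the roles of the indices) it equals $\min\{1,p_2/p_1\}=p_2/p_1$. Applying \eqref{cont-mps} to each direction gives
\[
\sup_{j\in\No}\frac{\varphi_2(2^j)}{\varphi_1(2^j)}<\infty
\qquad\text{and}\qquad
\sup_{j\in\No}\frac{\varphi_1(2^j)}{\varphi_2(2^j)^{p_2/p_1}}<\infty .
\]
Chaining these two estimates, there are constants with $\varphi_2(2^j)\leq C_1\varphi_1(2^j)\leq C_1C_2\,\varphi_2(2^j)^{p_2/p_1}$ for every $j\in\no$, that is, $\varphi_2(2^j)^{1-p_2/p_1}\leq C_1C_2$.

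The crux of the argument — and the only point where the standing hypothesis $\sup_{k\in\no}\varphi_2(2^k)=\infty$ enters — is precisely this bootstrapping inequality: if $p_1>p_2$ the exponent $1-p_2/p_1$ is strictly positive, so $\varphi_2(2^j)$ would be uniformly bounded, contradicting $\sup_k\varphi_2(2^k)=\infty$. Hence $p_1=p_2=:p$. Once this is established, both values of $\varrho$ equal $1$, the two instances of \eqref{cont-mps} reduce to $\sup_j \varphi_2(2^j)/\varphi_1(2^j)<\infty$ and $\sup_j \varphi_1(2^j)/\varphi_2(2^j)<\infty$, and these together are exactly $\varphi_1\sim\varphi_2$. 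I expect no real obstacle beyond this short exponent-comparison step; everything else is a direct transcription of Theorem~\ref{cont} applied in both directions.
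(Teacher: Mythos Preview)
Your proposal is correct and follows exactly the route the paper has in mind: the corollary is stated there without proof, merely as one of the ``immediate consequences of Theorem~\ref{cont}'', and what you wrote is precisely the argument one has to supply --- apply \eqref{cont-mps} in both directions, chain the two inequalities to force $\varphi_2(2^j)^{1-p_2/p_1}$ bounded, and invoke $\sup_k\varphi_2(2^k)=\infty$ to rule out $p_1>p_2$.
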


\begin{remark}
Note that in case of $\sup_{k\in\no} \varphi_i(2^k)<\infty$ for $i=1$ or $i=2$, no further condition on $p_i$ can be extracted in view of  Corollary~\ref{linf-mps}.
\end{remark}

Dealing with the situation $\varphi_1=\varphi_2=\varphi$, we can now strengthen Proposition~\ref{Prop-1-mps}(ii).

  \begin{corollary}
  Let    $0<p_i<\infty$, $i=1,2$, $\varphi\in\Gpx{\max(p_1,p_2)}(\mathbb{D})$, with $\varphi(1)=1$.  Consider the embedding
  \begin{equation}
    \label{same-phi}
\id:    m_{\varphi, p_1}\to m_{\varphi, p_2}.
  \end{equation}
    \begin{enumerate}[\upshape\bfseries (i)]
 \item
    Assume that $\sup_{k\in\no} \varphi(2^k)<\infty$. Then \eqref{same-phi} is always continuous, in particular, $m_{\varphi,p_1}=m_{\varphi, p_2}$.
  \item
    Assume that $\sup_{k\in\no} \varphi(2^k)=\infty$. Then the embedding \eqref{same-phi} is continuous  if, and only if,  $p_1\geq p_2$.
\item
    The embedding \eqref{same-phi} is never compact.
\end{enumerate}
\end{corollary}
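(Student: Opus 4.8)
The plan is to read off all three parts from the machinery already assembled, above all Theorem~\ref{cont}. The first thing I would record is that the hypothesis $\varphi\in\Gpx{\max(p_1,p_2)}(\mathbb{D})$ forces $\varphi\in\Gpx{p_i}(\mathbb{D})$ for \emph{both} $i=1,2$: this is immediate from the monotonicity $\Gpx{p_2}(\mathbb{D})\subseteq\Gpx{p_1}(\mathbb{D})$ valid for $p_1\le p_2$. This is precisely the membership required in order to invoke Theorem~\ref{cont} and Corollary~\ref{linf-mps} with the choice $\varphi_1=\varphi_2=\varphi$, and it explains the appearance of the $\max$ in the statement.

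For part (i) I would argue directly: since $\varphi(1)=1\neq0$ and $\sup_{k\in\no}\varphi(2^k)<\infty$, Proposition~\ref{Prop-1-mps}(v) gives $m_{\varphi,p_1}=\ell_\infty=m_{\varphi,p_2}$ in the sense of equivalent norms. Hence \eqref{same-phi} is the identity on $\ell_\infty$, in particular bounded, and $m_{\varphi,p_1}=m_{\varphi,p_2}$.

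For part (ii) I would apply Theorem~\ref{cont} with $\varphi_1=\varphi_2=\varphi$, so that the continuity criterion \eqref{cont-mps} becomes
\[
\sup_{j\in\No}\frac{\varphi(2^j)}{\varphi(2^j)^{\varrho}}=\sup_{j\in\No}\varphi(2^j)^{1-\varrho}<\infty,\qquad \varrho=\min\Big\{1,\tfrac{p_1}{p_2}\Big\}.
\]
If $p_1\ge p_2$ then $\varrho=1$ and the supremum equals $1$, so the embedding is continuous; if $p_1<p_2$ then $\varrho=p_1/p_2<1$, hence $1-\varrho>0$, and since we are in the regime $\sup_{k\in\no}\varphi(2^k)=\infty$ the supremum is infinite and the criterion fails. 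This gives the claimed equivalence with $p_1\ge p_2$.

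For part (iii), whenever \eqref{same-phi} is bounded --- that is, in case (i), and in case (ii) when $p_1\ge p_2$ --- I would insert it into the chain
\[
\ell_{p_1}\hookrightarrow m_{\varphi,p_1}\xrightarrow{\ \id\ } m_{\varphi,p_2}\hookrightarrow\ell_\infty,
\]
where the outer inclusions hold by Corollary~\ref{lp-mps} (using $\varphi\in\Gpx{p_1}(\mathbb{D})$, $\varphi(1)=1$) and Proposition~\ref{Prop-1-mps}(iv). Were \eqref{same-phi} compact, the composition $\ell_{p_1}\hookrightarrow\ell_\infty$ would be compact, contradicting the non-compactness of that embedding. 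I do not expect any genuine obstacle here, since everything reduces to combining Theorem~\ref{cont} with earlier inclusions; the one point deserving a moment's care is the propagation of the $\Gp$-membership to both indices noted at the outset.
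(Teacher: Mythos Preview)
Your proposal is correct and follows essentially the same route as the paper: part~(i) via $m_{\varphi,p_i}=\ell_\infty$ (the paper cites Corollary~\ref{linf-mps}, you cite Proposition~\ref{Prop-1-mps}(v) --- equivalent here), and part~(ii) by specialising Theorem~\ref{cont} to $\varphi_1=\varphi_2=\varphi$. Your treatment of part~(iii) via the chain $\ell_{p_1}\hookrightarrow m_{\varphi,p_1}\to m_{\varphi,p_2}\hookrightarrow\ell_\infty$ is in fact more explicit than the paper's own proof, which does not separately address~(iii) at all (it is implicit in Theorem~\ref{cont} for case~(ii), and in case~(i) amounts to the non-compactness of the identity on~$\ell_\infty$); your observation about propagating the $\Gp$-membership to both indices is a useful clarification that the paper leaves unsaid.
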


  \begin{proof}
Part (ii) directly follows from Theorem~\ref{cont}, while part (i) is a consequence of $m_{\varphi,p_i} = \ell_\infty$, $i=1,2$, recall Corollary~\ref{linf-mps}.
\end{proof}

Finally, let us compare the spaces $\mps$ with the classical scale $\ell_r$. Clearly, if $\sup_{k\in\no} \varphi(2^k)<\infty$, then $\mps\hra\ell_r$ if, and only if, $r=\infty$,  while $\ell_r \hra\mps$ is true for all $0<r\leq\infty$. Here we apply Corollary~\ref{linf-mps} again. Now we deal with the remaining case, that is, when $\sup_{k\in\no} \varphi(2^k)=\infty$.

\begin{corollary}\label{corlp}
  Let    $0<p<\infty$, $\varphi\in\Gpd$, with $\varphi(1)=1$, and assume  $\ \sup_{k\in\no} \varphi(2^k)=\infty$. Let $0<r\leq\infty$.
 \begin{enumerate}[\upshape\bfseries (i)]
 \item
   Then $\mps\hra \ell_r$ if, and only if, $p\leq r$ and $\varphi(2^j)\sim 2^{j\nd/p}$, that is, $\mps=\ell_p$.
 \item
   Then $\ell_r\hra \mps$ if, and only if,
   \[r\leq p, \qquad \text{or}\quad r>p\quad\text{and}\quad \{2^{-j\nd/r} \varphi(2^j)\}_{j\in\no} \in\ell_\infty.\]
\end{enumerate}  
\end{corollary}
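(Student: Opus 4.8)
The plan is to recognise that the classical space $\ell_r$ is itself a generalised Morrey sequence space, so that both embeddings fall directly under Theorem~\ref{cont} once $r<\infty$. Indeed, setting $\psi_r(2^j):=2^{j\nd/r}$ for $0<r<\infty$, the sequence $\{\psi_r(2^k)\}_{k\in\no}$ is nondecreasing while $\{2^{-k\nd/r}\psi_r(2^k)\}_{k\in\no}$ is constant equal to $1$, hence nonincreasing; thus $\psi_r\in\Gpx{r}(\mathbb{D})$ with $\psi_r(1)=1$. Moreover $m_{\psi_r,r}=\ell_r$ with equal norms, since $|Q_{-j,m}|=2^{j\nd}$ makes the two normalising factors cancel and the supremum over expanding cubes $Q_{-j,m}$ recovers the full $\ell_r$-sum as $j\to\infty$. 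Throughout I shall use that $\varphi\in\Gpd$ with $\varphi(1)=1$ forces $\varphi(2^j)\le 2^{j\nd/p}$ for all $j\in\no$, by \eqref{Gp-def-disc} taken with lower index $0$. Since $\sup_{k\in\no}\varphi(2^k)=\infty$ by hypothesis and $\sup_{k\in\no}\psi_r(2^k)=\infty$ for finite $r$, the standing assumptions of Theorem~\ref{cont} are satisfied in both directions whenever $r<\infty$.

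For part (i) I would apply Theorem~\ref{cont} with $\varphi_1=\varphi$, $p_1=p$ and $\varphi_2=\psi_r$, $p_2=r$, so that $\varrho=\min\{1,p/r\}$ and, by \eqref{cont-mps}, the embedding $\mps\hra\ell_r$ holds if, and only if, $\sup_{j}2^{j\nd/r}\varphi(2^j)^{-\varrho}<\infty$. If $p\le r$ then $\varrho=p/r$ and this condition reads $2^{j\nd/p}\lesssim\varphi(2^j)$, which combined with $\varphi(2^j)\le 2^{j\nd/p}$ yields $\varphi(2^j)\sim 2^{j\nd/p}$, i.e. $\mps=\ell_p$. If $p>r$ then $\varrho=1$ and the condition becomes $2^{j\nd/r}\lesssim\varphi(2^j)\le 2^{j\nd/p}$, which is impossible because $1/r>1/p$ forces $2^{j\nd(1/r-1/p)}\to\infty$; hence no embedding. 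For $r=\infty$ the embedding $\mps\hra\ell_\infty$ always holds by Prop.~\ref{Prop-1-mps}(iv), so the stated characterisation carries its content in the finite-$r$ case.

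For part (ii) I would instead set $\varphi_1=\psi_r$, $p_1=r$ and $\varphi_2=\varphi$, $p_2=p$, giving $\varrho=\min\{1,r/p\}$; by \eqref{cont-mps} the embedding $\ell_r\hra\mps$ holds if, and only if, $\sup_{j}\varphi(2^j)2^{-j\nd\varrho/r}<\infty$. For $r\le p$ we have $\varrho=r/p$, so the exponent equals $2^{-j\nd/p}$ and the condition $\varphi(2^j)\lesssim 2^{j\nd/p}$ is automatic from the bound above; hence the embedding always holds. For $p<r<\infty$ we have $\varrho=1$ and the condition is exactly $\{2^{-j\nd/r}\varphi(2^j)\}_{j\in\no}\in\ell_\infty$, as claimed. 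Finally $r=\infty$ is handled by Corollary~\ref{linf-mps}: $\ell_\infty\hra\mps$ would force $\sup_{k\in\no}\varphi(2^k)<\infty$, contradicting our standing hypothesis, which matches the stated condition $\{\varphi(2^j)\}_{j\in\no}\in\ell_\infty$ (the case $r=\infty$) failing. The only genuinely delicate points are the bookkeeping of $\varrho$ across the two regimes and the correct treatment of the endpoint $r=\infty$; once $\ell_r$ is identified with $m_{\psi_r,r}$, no estimate beyond the elementary bound $\varphi(2^j)\le 2^{j\nd/p}$ is needed.
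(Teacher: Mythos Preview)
Your proposal is correct and follows exactly the paper's approach: identify $\ell_r$ with $m_{\psi_r,r}$ for $\psi_r(2^j)=2^{j\nd/r}$ and invoke Theorem~\ref{cont}, using only the elementary bound $\varphi(2^j)\le 2^{j\nd/p}$ from \eqref{Gp-def-disc}. Your treatment is in fact more explicit than the paper's one-line proof, carefully tracking the two regimes for $\varrho$ and handling the endpoint $r=\infty$ separately via Prop.~\ref{Prop-1-mps}(iv) and Corollary~\ref{linf-mps}.
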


\begin{proof}
  The result immediately follows from Theorem~\ref{cont} using $\ell_r=m_{\varphi_r,r}$, $\varphi_r(2^j)\sim 2^{j\nd/r}$, $j\in\no$, together with the assumption $\varphi\in \Gpd$. 
\end{proof}

\begin{remark}
  This is the extension of \cite[Cor.~3.3(ii)]{HaSk-bm1} when we dealt with the case  $\varphi(2^j)\sim 2^{j\nd/u}$, $j\in\no$, $0<p\leq u<\infty$. Then the condition in (ii) simplifies to $u\geq r$.   Note that $r=\infty$ is excluded in (ii) in coincidence with Corollary~\ref{linf-mps}. Part (i) can also be considered as the discrete counterpart of \eqref{Mphi-Lr}.
  \end{remark}

 \begin{corollary}
		Let $\varphi\in \mathcal{G}(\mathbb{D})$ and $\rphi{\varphi}<\infty$.  Then                 for any $p \in (0, \rphi{\varphi}]$,
\[ m_{\rphi{\varphi}, p}\hra m_{\varphi, p} . \]
              \end{corollary}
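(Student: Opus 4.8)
The plan is to realise $m_{\rphi{\varphi},p}$ as a generalised Morrey sequence space $m_{\varphi_0,p}$ and then to reduce the claim to the elementary monotonicity principle in Proposition~\ref{Prop-1-mps}(iii). By the discrete reading of Example~\ref{exm-Gp}(i) (recall $\varphi(t)=t^{\nd/u}$ and the remark after Definition~\ref{D-mps}), the classical space $m_{\rphi{\varphi},p}$ is exactly $m_{\varphi_0,p}$ with $\varphi_0(2^j):=2^{j\nd/\rphi{\varphi}}$, $j\in\no$. First I would check that $\varphi_0\in\Gpd$ for every $p\in(0,\rphi{\varphi}]$: the sequence $\{\varphi_0(2^j)\}_{j}$ is plainly nondecreasing, while $\{2^{-j\nd/p}\varphi_0(2^j)\}_{j}=\{2^{j\nd(1/\rphi{\varphi}-1/p)}\}_{j}$ is nonincreasing precisely when $1/\rphi{\varphi}-1/p\le 0$, i.e. $p\le\rphi{\varphi}$. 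This is exactly where the hypothesis $p\in(0,\rphi{\varphi}]$ enters, and it makes $m_{\varphi_0,p}$ a legitimate object in our scale.

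The key input is the structural fact recorded in the discussion preceding Lemma~\ref{lorentz-mps}: since $\rphi{\varphi}<\infty$, the supremum defining $\rphi{\varphi}$ in \eqref{def-rphi} is attained, so $\varphi\in\mathcal{G}_{\rphi{\varphi}}(\mathbb{D})$. Granting this, I would apply the discrete monotonicity inequality \eqref{Gp-def-disc} for the class $\mathcal{G}_{\rphi{\varphi}}(\mathbb{D})$ with indices $j=0$ and arbitrary $k\in\no$; combined with the normalisation $\varphi(1)=\varphi(2^0)=1$ this yields the pointwise bound
\[ \varphi(2^k)\le 2^{k\nd/\rphi{\varphi}}=\varphi_0(2^k),\qquad k\in\no. \]

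Finally I would invoke Proposition~\ref{Prop-1-mps}(iii) with $\varphi_1=\varphi_0$, $\varphi_2=\varphi$ and constant $C=1$: the inequality $\varphi_0(2^k)\ge\varphi(2^k)$ for all $k\in\no$ immediately gives $m_{\varphi_0,p}\hra m_{\varphi,p}$, which is the assertion. I do not expect any genuine analytic obstacle here; the only point needing care is the justification of $\varphi\in\mathcal{G}_{\rphi{\varphi}}(\mathbb{D})$, but this is already available (it parallels \cite[Lemma~4.7]{HLMS}). Note that routing the argument through Proposition~\ref{Prop-1-mps}(iii) is cleaner than going via Theorem~\ref{cont}, since it requires no case distinction according to whether $\sup_{k\in\no}\varphi(2^k)$ is finite or infinite: the pointwise domination $\varphi\le\varphi_0$ simultaneously covers the nontrivial case and the degenerate case $m_{\varphi,p}=\ell_\infty$.
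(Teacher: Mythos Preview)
Your proposal is correct and follows essentially the same route as the paper's own proof: both use that $\rphi{\varphi}<\infty$ forces $\varphi\in\mathcal{G}_{\rphi{\varphi}}(\mathbb{D})$, deduce the pointwise bound $\varphi(2^k)\le 2^{k\nd/\rphi{\varphi}}$ from \eqref{Gp-def-disc} (with the standing normalisation $\varphi(1)=1$), and conclude via the monotonicity principle of Proposition~\ref{Prop-1-mps}(iii). Your version is simply more explicit, in particular in verifying $\varphi_0\in\Gpd$ and in naming the proposition used.
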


\begin{proof}
The statement follows directly from the inclusion $\varphi\in \mathcal{G}_{\rphi{\varphi}}(\mathbb{D})$ in case of $\rphi{\varphi}<\infty$, since by the definition of the $\mathcal{G}_{\rphi{\varphi}}(\mathbb{D})$ class we get $2^{k\nd/\rphi{\varphi}} \ge \varphi(2^k)$, $k\in \no$.
\end{proof}

\begin{remark}
    The above result can be seen as the extension of both, Corollary~\ref{lp-mps} (since $\ell_p\hra  m_{\rphi{\varphi}, p}$ in case of $p\leq \rphi{\varphi}$), and Lemma~\ref{lorentz-mps} in case of $p<\rphi{\varphi}$ (since $\ell_{u,\infty}\hra m_{u,p}$ whenever $u>p$, cf. \cite[Prop.~2.3(iv)]{HaSk_Mo_seq}). \\
    In case of $\rphi{\varphi}=\infty$, not covered by the above corollary, we are led to the situation that $\varphi=\mathrm{const}$, hence by Proposition~\ref{Prop-1-mps}(v) and Corollary~\ref{linf-mps}, $m_{\rphi{\varphi}, p}= \ell_\infty= m_{\varphi, p} $, extending the above result. 
\end{remark}

\section{Finite-dimensional generalised  Morrey sequence spaces}\label{sec-3}

We insert a short digression to finite-dimensional spaces where, in contrast to the spaces $\mps$ considered so far, all continuous embeddings are compact. But now we are interested in the dependence of the norms of such compact embedding operators on their dimension. For simplicity we restrict ourselves from the very beginning to dyadic dimensions. We follow our approach in \cite{HaSk-bm1}.

\begin{definition}
	Let $0<p <\infty$, $j\in\no$ and  $\varphi\in \Gpd$ be fixed and $\mathcal{K}_j = \{k\in \Zn: Q_{0,k}\subset Q_{-j,0}\}$ . We define
	\begin{align}
		\mpb = & \Big\{ \lambda = \{\lambda_{k}\}_{k\in \mathcal{K}_j} \subset\C: \nonumber\\
		&\quad  \|\lambda|\mpb\| = \sup_{Q_{-\nu,m}\subset Q_{-j,0}}\!\!
		 \varphi(2^{\nu})\Big(|Q_{-\nu,m}|^{-1}\sum_{k:Q_{0,k}\subset Q_{-\nu,m}}\!\!|\lambda_k|^p \Big)^{\frac 1 p}<\infty\Big\},
	\end{align}
	where the  supremum is taken over all $\nu\in\no$ and $m\in\Zn$ such that 
	$Q_{-\nu,m}\subset Q_{-j,0}$.\nonumber
\end{definition}

\begin{proposition}\label{lemma15030}
	Let $0< p_1, p_2<\infty$,   $\varphi_i\in \Gpx{p_i}(\mathbb{D})$, $i=1,2$  and  $j\in \no$  be given. Then the norm of the compact identity operator  
	\begin{equation}\label{id_j-m}
		\id_j: \mpbet\hookrightarrow \mpbzt
	\end{equation}
	satisfies
	\begin{equation}\label{1503-0}
		\|\id_j\| 
		=  	\max_{0\le\nu\le j} \frac{\varphi_2(2^{\nu})}{\varphi_1(2^\nu)}
	\end{equation}
     provided that  $p_1\ge p_2$. If $p_1 < p_2$, then  there is a constant $c$, $0<c\le 1$, independent of j such that 
	\begin{equation}\label{1503-a}
		c\, 	\max_{0\le\nu\le j} \frac{\varphi_2(2^{\nu})}{\varphi_1(2^\nu)^{p_1/p_2}} \le\|\id_j\| \leq 
		\max_{0\le\nu\le j} \frac{\varphi_2(2^{\nu})}{\varphi_1(2^\nu)^{p_1/p_2}}\ .
	\end{equation}
\end{proposition}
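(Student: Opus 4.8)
The plan is to establish matching upper and lower bounds for $\|\id_j\|$, treating the two regimes $p_1\ge p_2$ and $p_1<p_2$ in parallel. Since both spaces in \eqref{id_j-m} are finite-dimensional, compactness of $\id_j$ is automatic and only the value of the norm requires work. The upper bounds will come from adapting \emph{Step~1} of the proof of Theorem~\ref{cont}, being careful to extract the precise maximum over the finitely many relevant scales $0\le\nu\le j$; the lower bounds will come from exhibiting extremal test sequences supported on a single dyadic subcube of $Q_{-j,0}$.

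For the \textbf{upper bounds} I would fix $\lambda$ and estimate each term $\varphi_2(2^\nu)\bigl(|Q_{-\nu,m}|^{-1}\sum_{k:Q_{0,k}\subset Q_{-\nu,m}}|\lambda_k|^{p_2}\bigr)^{1/p_2}$ over cubes $Q_{-\nu,m}\subset Q_{-j,0}$. When $p_1\ge p_2$ the power-mean (Jensen) inequality gives that the normalised $\ell_{p_2}$ average is dominated by the normalised $\ell_{p_1}$ average over the same cube, whence the term is at most $\frac{\varphi_2(2^\nu)}{\varphi_1(2^\nu)}\|\lambda|\mpbet\|$; taking the supremum yields $\|\id_j\|\le\max_{0\le\nu\le j}\frac{\varphi_2(2^\nu)}{\varphi_1(2^\nu)}$. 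When $p_1<p_2$ I would instead split $|\lambda_k|^{p_2}=|\lambda_k|^{p_1}|\lambda_k|^{p_2-p_1}$, bound the second factor by $\|\lambda|\ell_\infty\|^{p_2-p_1}\le\|\lambda|\mpbet\|^{p_2-p_1}$ (the last step using Prop.~\ref{Prop-1-mps}(iv) read off at the scale $\nu=0$, where $\varphi_1(1)=1$), and reorganise exactly as in \eqref{mps-2} to produce the factor $\frac{\varphi_2(2^\nu)}{\varphi_1(2^\nu)^{p_1/p_2}}$, giving the upper estimate in \eqref{1503-a}.

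For the \textbf{lower bounds} I would choose $\nu^\ast\in\{0,\dots,j\}$ realising the respective maximum. In the case $p_1\ge p_2$ it suffices to take the indicator sequence $\lambda^0$ of a cube $Q_{-\nu^\ast,m_0}\subset Q_{-j,0}$: by the $\Gpd$-monotonicity, exactly as recorded in Example~\ref{ex-rev}, one computes $\|\lambda^0|\mpbet\|=\varphi_1(2^{\nu^\ast})$ and $\|\lambda^0|\mpbzt\|=\varphi_2(2^{\nu^\ast})$, so their quotient equals $\max_\nu\frac{\varphi_2(2^\nu)}{\varphi_1(2^\nu)}$ and the norm \eqref{1503-0} is attained. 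In the case $p_1<p_2$ I would transplant the fractal construction of \emph{Substep~2.2} of Theorem~\ref{cont} onto the single cube $Q_{-\nu^\ast,0}$: place $k_{\nu^\ast}=\whole{2^{d\nu^\ast}\varphi_1(2^{\nu^\ast})^{-p_1}}$ entries equal to $1$, distributed in the nested manner so that every subcube $Q_{-\nu,n}\subset Q_{-\nu^\ast,0}$ carries at most $k_\nu$ of them, which is possible by \eqref{30.12-1}. Checking all scales — subcubes of $Q_{-\nu^\ast,0}$ via \eqref{30.12-2}, and the larger cubes up to $Q_{-j,0}$ via the monotonicity \eqref{Gp-def-disc} — gives $\|\lambda|\mpbet\|\le 1$, while testing $\|\lambda|\mpbzt\|$ on $Q_{-\nu^\ast,0}$ itself produces $\varphi_2(2^{\nu^\ast})\bigl(2^{-d\nu^\ast}k_{\nu^\ast}\bigr)^{1/p_2}\ge c\,\frac{\varphi_2(2^{\nu^\ast})}{\varphi_1(2^{\nu^\ast})^{p_1/p_2}}$, where the constant $c$ arises solely from $\whole{x}\ge x/2$ for $x\ge 1$ (here $2^{d\nu^\ast}\varphi_1(2^{\nu^\ast})^{-p_1}\ge 1$ because $\varphi_1(2^{\nu^\ast})\le 2^{\nu^\ast d/p_1}$ by \eqref{Gp-def-disc}) together with the combinatorial constant in \eqref{30.12-1}.

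The main obstacle is the lower bound when $p_1<p_2$: one must verify that the nested distribution keeps $\|\lambda|\mpbet\|$ bounded \emph{uniformly}, including on all enclosing cubes $Q_{-\nu,m}$ with $\nu^\ast\le\nu\le j$ that strictly contain the support, and — crucially for the statement — that the resulting constant $c$ in \eqref{1503-a} depends neither on $j$ nor on the chosen scale $\nu^\ast$. Both points are handled by the scale-invariant inequalities \eqref{30.12-1} and \eqref{Gp-def-disc}, so that the only surviving loss is the universal factor from the floor function; everything else matches the already-established upper bound.
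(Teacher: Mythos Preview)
Your proposal is correct and follows essentially the same strategy as the paper's proof: H\"older/power-mean for the upper bound when $p_1\ge p_2$, the splitting $|\lambda_k|^{p_2}=|\lambda_k|^{p_1}|\lambda_k|^{p_2-p_1}$ combined with $\|\lambda|\ell_\infty\|\le\|\lambda|\mpbet\|$ when $p_1<p_2$, the indicator of a single cube $Q_{-\nu^\ast,0}$ for the lower bound when $p_1\ge p_2$, and the fractal $\{0,1\}$-sequence with $\lfloor 2^{d\nu^\ast}\varphi_1(2^{\nu^\ast})^{-p_1}\rfloor$ ones for the lower bound when $p_1<p_2$. The only cosmetic difference is that the paper carries out the fractal distribution by an explicit top-down dyadic splitting (evenly allocating the $N_0$ ones among the $2^d$ children, then recursing), whereas you invoke the construction of Substep~2.2 in Theorem~\ref{cont} directly; these are the same object described two ways. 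Your explicit check of the supercubes $Q_{-\nu,m}\supset Q_{-\nu^\ast,0}$ with $\nu^\ast<\nu\le j$ via \eqref{Gp-def-disc} is in fact a point the paper's proof leaves implicit.
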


\begin{proof}
	In case of $p_1\ge p_2$  the upper estimate for $\|\id_j\|$ follows from H\"older's inequality. We prove the lower estimate. Let us choose $\nu_0\in \{0, \dots, j\} $ such that
	\begin{equation}\label{12_04_1} 	\frac{\varphi_2(2^{\nu_0})}{\varphi_1(2^{\nu_0})}= \max_{0\le\nu\le j} \frac{\varphi_2(2^{\nu})}{\varphi_1(2^\nu)}.
	\end{equation} 
	  We  consider the  sequence  $\lambda=\{\lambda_k\}_k$ defined by 
	  \[\lambda_k= \begin{cases}
	  	1 & \text{if} \ Q_{0,k}\subset Q_{-\nu_0,0} ,\\
	  	0 & \text{otherwise} .
	  \end{cases} 
	  \]
	  Then $\|\lambda|\mpbet\|= \varphi_1(2^{\nu_0})$ and  $\|\lambda|\mpbzt\|= \varphi_2(2^{\nu_0})$. This proves \eqref{1503-0}. 
	  
	  Let now $p_1<p_2$  and $\|\lambda|\mmbet\|=1$. 
   Then $|\lambda_k|\le 1$ for any $k\in\Zn$ and thus
	\[\sum_{k:Q_{0,k}\subset Q_{-\nu,m}}\!\!\! |\lambda_k|^{p_2} \le  \sum_{k:Q_{0,k}\subset Q_{-\nu,m}}\!\!\! |\lambda_k|^{p_1} .\]
	In consequence,  for any $\nu$ with $0\le \nu\le j$, we have 
	\begin{align*}
		\frac{\varphi_2(2^\nu)^{p_2}}{2^{\nu d}} \sum_{k:Q_{0,k}\subset Q_{-\nu,m }} \!\!\! |\lambda_k|^{p_2} & \le  \;\frac{\varphi_2(2^\nu)^{p_2}}{2^{\nu d}}   \sum_{k:Q_{0,k}\subset Q_{-\nu,m }}\!\!\! |\lambda_k|^{p_1} \\
		&\le  \frac{\varphi_2(2^\nu)^{p_2}}{\varphi_1(2^\nu)^{p_1}} \  \|\lambda|\mmbet\|^{p_1} \le \frac{\varphi_2(2^\nu)^{p_2}}{\varphi_1(2^\nu)^{p_1}}.
	\end{align*}
	This proves the upper estimate in \eqref{1503-a}. 
	
	To prove the lower estimates in \eqref{1503-a} we once more choose $\nu_0\in \{0, \dots, {j}\}$ as in \eqref{12_04_1}. We may assume that $\nu_0$ is the smallest such  number.  Then $\frac{2^{d\nu_0}}{\varphi_1(2^{\nu_0})^{p_1}}\ge 1$ since $\nu_0\ge 0$ and $\varphi_1\in \Gpx{p_1}(\mathbb{D})$. We put 
	\[ N_0=\whole{\frac{2^{d\nu_0}}{\varphi_1(2^{\nu_0})^{p_1}}} . \]
	Then $1\le N_0 \le 2^{jd}$. We consider a sequence $\lambda=\{\lambda_k\}_{k\in {\mathcal{K}_j}}$ that consists of elements $\lambda_k \in \{0,1\}$ only, where exactly $N_0$ elements are equal to $1$ and  the rest are equal to zero. We assume that $\lambda_k=0$ if $Q_{0,k}\nsubseteq Q_{-\nu_0,0}$. Now we describe how the elements $\lambda_k=1$ are distributed inside the cube $Q_{-\nu_0,0}$.

	Let $\mathcal{N}_1=\{n\in \Z^d: Q_{-(\nu_0-1),{n}}\subset Q_{-\nu_0,0}\}$. We choose  the natural numbers $N_i^{(1)}$, $i=1,\ldots , 2^\nd$, in such a way that 
	\begin{equation}\label{N1}
		\sum_{n\in \mathcal{N}_1} N^{(1)}_n=N_0 \qquad\text{and}\quad  \whole{2^{-d}N_0}\le N_n^{(1)}\le \whole{2^{-d}N_0}+1.
	\end{equation} 
	We distribute the elements of the sequence among these cubes in such a way that the cube $Q_{-(\nu_0-1),n}$ contains $N_n^{(1)}$ elements equal to $1$ if $n\in \mathcal{N}_1$. 
	Then 
	\begin{equation}\label{N1-2}
		N^{(1)}_n\le \frac{2^{d(\nu_0-1)}}{\varphi_1(2^{\nu_0})^{p_1}}  +1 \le  2 \frac{2^{d(\nu_0-1)}}{\varphi_1(2^{\nu_0-1})^{p_1}} 
	\end{equation}
	since $\varphi_1\in \mathcal{G}_{p_1}(\mathbb{D})$. Consequently, 
        \begin{align}
          \sup_{{n:\, Q_{-(\nu_0-1),n}}\subset Q_{-j,0}} & \!\!
	   	\varphi(2^{\nu_0-1})\bigg(|Q_{-(\nu_0-1),n}|^{-1}\sum_{k:Q_{0,k}\subset Q_{-\nu_0-1,n}}\!\!|\lambda_k|^{p_1} \bigg)^{\frac{1}{p_1}}\nonumber \\
& \le	   	\varphi(2^{\nu_0-1}) 2^{-d\frac{\nu_0-1}{p_1}} \sup_{n\in \mathcal{N}_1} (N^{(1)}_n)^{1/p_1} \le  2^{1/p_1}  
	   \end{align}
   
   In the second step we distribute the elements of the sequence inside any cube $Q_{{-(\nu_0-1)}, n}\subset Q_{{-\nu_0}, 0}$.  This means  we distribute in a similar way as above $N^{(1)}_n$ unit elements  among the cube of side length $2^{\nu_0-2}$ that are contained  in  the  cube $Q_{-(\nu_0-1), n}$ of side length $2^{\nu_0-1}$, $n\in {\mathcal N}_1$. So we should distribute $N^{(1)}_n$ unit elements. In consequence such a cube $Q_{-(\nu_0-2),m}$ contains $N^{(2)}_m$ unit elements and  
\begin{align*}
	N^{(2)}_m  & \le \whole{2^{-d} N^{(1)}_n} +1 \ \le \ 2^{-d} (2^{-d}N_0 +1) +1\\ & \le   \frac{2^{d(\nu_0-2)}}{\varphi_1(2^{\nu_0})^{p_1}}  + 2^{-d}  +1 \ \le \ c \frac{2^{d(\nu_0-2)}}{\varphi_1(2^{\nu_0-2})^{p_1}}  , 
\end{align*}
cf. \eqref{N1} and \eqref{N1-2}. We continue this procedure along any descending path up to the moment when $N^{(i)}_m=1$.  At any  $(i)$-level we have  
 \begin{equation}
 	N^{(i)}_m\le \
 	c \ \frac{2^{d(\nu_0-i)}}{\varphi_1(2^{\nu_0-i})^{p_1}}
 \end{equation}
and we can choose the same   constant $c>0$ at  any level $(i)$. Moreover,  
 \begin{align*}
	\sup_{(i,m):\,Q_{-(\nu_0-i),m}\subset Q_{-j,0}}\!\!
	& \varphi(2^{\nu_0-{i}})\left(|Q_{-(\nu_0-i),m}|^{-1}\sum_{k:Q_{0,k}\subset Q_{-(\nu_0-i),m}}\!\!|\lambda_k|^{p_1} \right)^{\frac{ 1}{ p_1}} \\
& \le	\varphi(2^{\nu_0-i}) 2^{-d\frac{\nu_0-i}{p_1}} \sup_{n\in \mathcal{N}_1} (N^{(i)}_n)^{1/p_1} \le  c^{1/p_1}.   
\end{align*}
In consequence the norm of the sequence $\lambda$ in the space $\mpbet$ is bounded, 
\[ \|\lambda|\mpbet\| \le c^{1/p_1} .\] 
On the other hand, 
\begin{align*}
\frac{\varphi_2(2^{\nu_0})}{\varphi_1(2^{\nu_0})^{p_1/p_2}}\le C 
\|\lambda| \mpbzt\|
\le c^{1/p_1} \|\id\|. 
\end{align*}
This proves the result.
    \end{proof}

      \begin{remark}
      Proposition~\ref{lemma15030} coincides in case of $\varphi_i(2^k) \sim 2^{k\nd/u_i}$,
$k\in\no$, $0<p_i\leq u_i<\infty$, $i=1,2$, with \cite[Lemma~6.3]{HaSk_Mo_seq}. In fact, our above finding, sloppily written as 
\[
  \Big\|	\id_j: \mpbet\hookrightarrow \mpbzt\Big\| \ \sim \ 	\max_{0\le\nu\le j} \frac{\varphi_2(2^{\nu})}{\varphi_1(2^\nu)^{\varrho}},
  \]
with (hidden) constants independent of $j\in\no$ and $\varrho=\min\{1,\frac{p_1}{p_2}\}$, looks much nicer than the previous result for the  special case $\varphi_i(2^k) \sim 2^{k\nd/u_i}$, explicitly stated for all admissible situations of $p_i$ and $u_i$, $i=1,2$.  
\end{remark}

\begin{remark}
  The above finite-dimensional norm estimates turned out to be an essential key for our compactness studies of $\id_j$ (or even, closely connected, compact embeddings $\id_\Omega$ of smoothness Morrey spaces of type $\mathcal{M}_{u,p}(\Omega)$ on bounded domains $\Omega\subset\rd$) -- always 
 restricted to the case of $\varphi(t)\sim t^{\nd/u}$, $t>0$, $0<p<u<\infty$. In \cite{HaSk-morrey-comp} we studied their corresponding entropy and approximation numbers, $e_k(\id_j)$ and $a_k(\id_j)$, or $e_k(\id_\Omega)$ and $a_k(\id_\Omega)$, respectively. For details and properties of these concepts we refer to \cite{CS,EE,Koe,Pie-s}, as well as to \cite{Pia} for the more general setting of $s$-numbers. 
\end{remark}

    \section{Strictly singular embeddings}\label{sec-4}
    We return to the infinite-dimensional sequence spaces $\mps$, $\varphi\in\Gpd$, $0<p<\infty$, and their embeddings as characterised in Theorem~\ref{cont}. Recall that 
\begin{equation}
  \id : \mpse\hra\mpsz
\end{equation}
is never compact, in contrast to their finite-dimensional counterparts $\id_j: \mpbet\hookrightarrow \mpbzt$ considered in the previous section. Here $0<p_i<\infty$,  $\varphi_i\in\Gpx{i}{(\mathbb{D})}$, $i=1,2$. Now we study the question  when this embedding $\id $ is strictly singular. This notion goes back to Kato \cite{kato-58}. We refer to \cite[Sect.~5.2.3.8, p.201]{pie-history} for further historic information and \cite{Pie-2001} for some general relation between compact and strictly singular operators.

\begin{definition}
A bounded linear operator $T\in\mathcal{L}(X,Y)$ between two Banach spaces $X$ and $Y$ is said to be {\em strictly singular}, if there is no infinite-dimensional closed subspace $Z$ of $X$ such that $T:Z\to T(Z)$, the restriction of $T$ to $Z$, is an isomorphism.  
\end{definition}

\begin{remark}\label{def-ext}
Dealing with quasi-Banach spaces, we shall rely on the following alternative definition, see \cite[Def.~2.4]{lang-nek-23}. The linear and bounded operator $T:X\to Y$ acting between the quasi-Banach spaces $X$ and $Y$ is said to be {\em strictly singular} if for each infinite-dimensional subspace $Z\subset X$,
\begin{equation}
\inf \{ \|Tx|Y\|: x\in Z, \|x|X\|=1\}=0.
\end{equation}
\end{remark}

Let us only mention a few important features, otherwise we refer to \cite[Section~4.5]{Abra-Ali} for further details.      Clearly any compact operator is strictly singular, while the converse is not true, cf. \cite[Thm.~4.58]{Abra-Ali}.

\begin{proposition}\label{lp-strictly}
  \begin{enumerate}[\upshape\bfseries (i)]
  \item
    The composition of a strictly singular operator and a bounded linear operator is strictly singular.
  \item
    If $0< p<q\leq\infty$, then the embedding $\ \id: \ell_p\to\ell_q\ $ is a non-compact strictly singular operator.
  \item
    Let $0 < p <\infty$, $0 < q < r \leq\infty$. Then the embedding $\ \id: \ell_{p,q} \hra \ell_{p,r}\ $
is strictly singular.
  \end{enumerate}
\end{proposition}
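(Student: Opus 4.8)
The plan is to treat the three assertions in turn, throughout invoking the quasi-Banach formulation of strict singularity from Remark~\ref{def-ext}.

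Part (i) is purely formal and I would derive it straight from the infimum condition. For a strictly singular $T$ and a bounded $S$, on any infinite-dimensional $Z$ the estimate $\|STx\|\le\|S\|\,\|Tx\|$ reduces the infimum for $ST$ to that for $T$, giving strict singularity of $ST$. For $TS$ I would split on a given infinite-dimensional $Z$: if $S$ is not bounded below on $Z$, pick normalised $x_n\in Z$ with $\|Sx_n\|\to0$, whence $\|TSx_n\|\le\|T\|\,\|Sx_n\|\to0$; if $S$ is bounded below on $Z$, then $S(Z)$ is infinite-dimensional, and strict singularity of $T$ on $\overline{S(Z)}$ yields vectors that I pull back through $S$, using the lower bound to keep the preimages normalisable. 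The statement as written (one factor bounded, the other strictly singular) is the $ST$ case.

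Part (ii): non-compactness is immediate, since the canonical vectors $e_n$ are bounded in $\ell_p$ while $\|e_n-e_m\|_{\ell_q}$ stays bounded away from $0$, so no subsequence converges in $\ell_q$. For strict singularity I would use a gliding-hump / Bessaga--Pe\l czy\'nski argument: every infinite-dimensional closed $Z\subset\ell_p$ contains, after an arbitrarily small perturbation, a normalised block basic sequence $(x_n)$ with pairwise disjoint supports. Disjointness gives $\|\sum_n a_n x_n|\ell_p\|=(\sum_n|a_n|^p)^{1/p}$ exactly, while $\|\sum_n a_n x_n|\ell_q\|^q=\sum_n|a_n|^q\|x_n|\ell_q\|^q\le\sum_n|a_n|^q$ because $\|x_n|\ell_q\|\le\|x_n|\ell_p\|=1$ for $q>p$. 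Testing on $y_N=N^{-1/p}\sum_{n=1}^N x_n$ gives $\|y_N|\ell_p\|=1$ but $\|y_N|\ell_q\|\le N^{1/q-1/p}\to0$ (and $\le N^{-1/p}\to0$ if $q=\infty$), so the infimum over $Z$ vanishes.

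Part (iii) is the substantial one, and the same scheme applies in principle: extract from $Z\subset\ell_{p,q}$ a normalised disjointly supported block basic sequence $(x_\nu)$ (possible since $q<\infty$, so the canonical basis is shrinking) and seek a combination whose $\ell_{p,r}$-norm is negligible against its $\ell_{p,q}$-norm. The crucial difference from (ii) is that Lorentz norms are computed through the non-increasing rearrangement, so disjointly supported blocks do not decouple additively and an equal-weight average no longer separates the norms (flat sequences have comparable $\ell_{p,q}$- and $\ell_{p,r}$-norms). The correct test vector is the logarithmically long, power-law weighted sum $y_N=\sum_{\nu=1}^N\nu^{-1/p}x_\nu$: for the canonical basis a direct computation gives $\|y_N|\ell_{p,s}\|^s\sim\sum_{\nu=1}^N\nu^{-1}\sim\log N$ for every $s$, hence $\|y_N|\ell_{p,s}\|\sim(\log N)^{1/s}$ and $\|y_N|\ell_{p,r}\|/\|y_N|\ell_{p,q}\|\sim(\log N)^{1/r-1/q}\to0$ since $r>q$. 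I would transfer this to arbitrary disjoint blocks via a disjointification estimate asserting that a normalised block, whatever its internal profile, contributes order $\nu^{-1}$ to the $s$-th power of the norm once scaled by $\nu^{-1/p}$ and placed disjointly; a computation for the extreme profiles (point masses and flat blocks) already confirms this behaviour. The main obstacle is exactly this uniformity over arbitrary block profiles: the rearrangement couples the blocks, so one must either establish a two-sided Lorentz disjointification lemma valid for all profiles or appeal to the known subspace structure of $\ell_{p,q}$, cf. \cite{LT1,CS}. Once that estimate is in place, the remainder of (iii) is routine bookkeeping.
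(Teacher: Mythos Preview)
The paper does not prove this proposition at all: it simply cites \cite[Cor.~4.62]{Abra-Ali} for (i), \cite[Thm.~4.58]{Abra-Ali} for (ii) (extended via (i) to $p<1$ and $q=\infty$), and \cite[Thm.~3.3]{lang-nek-23} for (iii). So there is no proof in the paper to compare your arguments against, only the literature.

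Your treatments of (i) and (ii) are sound and standard. The two-sided handling of (i) is more than the paper asks for, and the gliding-hump argument for (ii) is exactly the classical proof underlying the cited reference.

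For (iii), however, you have correctly located the genuine difficulty and then stopped short of resolving it. The passage from the canonical basis computation (where $\|y_N|\ell_{p,s}\|\sim(\log N)^{1/s}$ is indeed an easy check) to arbitrary normalised disjoint blocks is precisely the content of the Lang--Nekvinda paper \cite{lang-nek-23}, and it is not routine: the rearrangement mixes the blocks, and a block with a spread-out profile can interact with the others in a way that a point mass cannot. Your ``disjointification estimate asserting that a normalised block \ldots\ contributes order $\nu^{-1}$'' is exactly what needs proof, and checking the two extreme profiles does not establish it---intermediate profiles are where the work lies. So as written, (iii) is a plausible outline with the key lemma missing; to close it you would either have to reproduce the combinatorial estimates of \cite{lang-nek-23} or invoke that result directly, which is what the paper does.
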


For part (i) we refer to \cite[Cor.~4.62]{Abra-Ali}, for (ii) see \cite[Thm.~4.58]{Abra-Ali}, extended by (i) to $p<1$ and $q=\infty$, while part (iii) was recently obtained in \cite[Thm.~3.3]{lang-nek-23}. Here $\ell_{p,q}$ denote the Lorentz sequence spaces, recall \eqref{lorentz-1}, \eqref{lorentz-2}.\\

We begin with some preparation. Recall that for $\varphi \in \Gpd$ our assumption $\varphi(1)=1$ and the $\Gp$ condition imply that $\lim_{\nu\to\infty} 2^{-\nu\nd/p} \varphi(2^{\nu}) \in [0,1]$ exists.

\begin{lemma}\label{L1}
	Let $\varphi\in \Gpd$ and $\lim_{\nu\to\infty} 2^{-\nu\nd/p}\varphi(2^\nu) = 0$. For any sequence $\lambda = \{\lambda_m\}_{m\in\Zn}\in \ell_p(\Zn)$ there exists a dyadic cube $Q_\lambda$ such that 
	\begin{equation}
		\|\lambda|\mps\| = \frac{\varphi(\ell(Q_\lambda))}{|Q_\lambda|^{1/p}} \Big(\sum_{m: Q_{0,m}\subset Q_\lambda} |\lambda_m|^p\Big)^{1/p}. 
	\end{equation}  
\end{lemma}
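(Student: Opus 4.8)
The plan is to upgrade the supremum defining $\|\lambda|\mps\|$ to an attained maximum by showing that the quantity being maximised becomes negligible along both ``directions'' in which a dyadic cube can escape to infinity: as its side length grows, and as it moves off spatially. The first decay is supplied by the hypothesis $\lim_{\nu\to\infty}2^{-\nu\nd/p}\varphi(2^\nu)=0$, the second by the assumption $\lambda\in\ell_p$. First I would dispose of the case $\lambda=0$, where every cube gives the value $0$, and for $\lambda\neq 0$ record that $\lambda\in\ell_p\hookrightarrow\mps$ by Corollary~\ref{lp-mps}, so that $M:=\|\lambda|\mps\|\in(0,\infty)$. For each dyadic cube $Q_{-j,m}$, $j\in\no$, $m\in\Zn$, I abbreviate $F(Q_{-j,m})=\frac{\varphi(2^j)}{|Q_{-j,m}|^{1/p}}\big(\sum_{k:Q_{0,k}\subset Q_{-j,m}}|\lambda_k|^p\big)^{1/p}$, so that $M=\sup_{j,m}F(Q_{-j,m})$.

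The key estimate is $F(Q_{-j,m})\le b_j\,\|\lambda|\ell_p\|$ with $b_j:=2^{-j\nd/p}\varphi(2^j)$, obtained simply by bounding the inner sum by the full $\ell_p$-sum. Since $b_j\to 0$ by hypothesis, the tail suprema $\sup_{j>J}b_j$ tend to $0$ as $J\to\infty$, so I can fix $J\in\no$ with $\|\lambda|\ell_p\|\sup_{j>J}b_j<M$. This strictly bounds $F$ below $M$ on every cube of side length exceeding $2^J$, and therefore reduces the computation of $M$ to the finitely many levels $j\in\{0,\dots,J\}$.

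For each such fixed level $j$ I would then show that $F(Q_{-j,m})\to 0$ as $|m|\to\infty$: the prefactor $b_j$ is constant in $m$, while the inner sum runs over unit cubes $Q_{0,k}$ with $|k|\to\infty$ and hence vanishes because $\sum_k|\lambda_k|^p<\infty$. Consequently the supremum $a_j:=\sup_m F(Q_{-j,m})$ is actually a maximum, attained at some $m(j)$ (only finitely many $m$ can exceed any fixed positive threshold). Taking the largest of the finitely many attained values $a_0,\dots,a_J$ produces a cube $Q_\lambda$ with $F(Q_\lambda)=\max_{0\le j\le J}a_j=M$, which is the assertion.

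The main obstacle is precisely the double infinity of the index set: a maximising sequence of cubes could a priori run off either in the level $j$ or in the location $m$, so attainment is not automatic. The decisive structural point is the clean separation of the two escape mechanisms---the growth hypothesis on $\varphi$ neutralises large cubes through the factor $b_j$, and $\ell_p$-summability neutralises distant cubes at each fixed level. I expect no genuine difficulty beyond organising these two truncations carefully; notably, the assumption $\lim_{\nu\to\infty}2^{-\nu\nd/p}\varphi(2^\nu)=0$ is essential, since without it the large cubes need not be negligible and the supremum may fail to be attained.
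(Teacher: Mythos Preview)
Your proposal is correct and follows essentially the same approach as the paper: both arguments reduce the supremum to a finite family of dyadic cubes by using the hypothesis $2^{-j\nd/p}\varphi(2^j)\to 0$ to discard large cubes and the assumption $\lambda\in\ell_p$ to discard distant cubes. The only cosmetic difference is that the paper packages the spatial truncation into a single large cube $\widetilde{Q}$ containing most of the $\ell_p$-mass, whereas you do it level by level; your version is arguably a bit cleaner, since you avoid the dyadic case analysis of $Q$ versus $\widetilde{Q}$.
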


\begin{proof}
It is sufficient to prove the statement for sequences $\lambda = \{\lambda_m\}_{m\in\Zn}$ with  $\|\lambda| \ell_p\|=1 $. In view of our assumption and $\varphi\in\Gpd$, for any  $0<\varepsilon<1$ there exists a number $\nu_0=\nu_0(\varepsilon,\lambda)>0$ 
such that 
\[  \frac{\varphi(2^\nu)}{2^{\nu d/p}}\le \varepsilon \qquad\text{for all}\quad \nu\geq \nu_0. \]
Moreover, there exists  a dyadic cube $\widetilde{Q}$ such that 
\[ \Big(\sum_{Q_{0,m}\nsubseteq \widetilde{Q}}|\lambda_m|^p\Big)^{1/p}\le \varepsilon . 
\]   
Thus, if a dyadic cube $Q$ is sufficiently large, $\ell(Q)\ge 2^{\nu_0}$, or $Q\nsubseteq \widetilde{Q}$, then 
\[\frac{\varphi(\ell(Q))}{|Q|^{1/p}} \Big(\sum_{Q_{0,m}\subset Q} |\lambda_m|^p\Big)^{1/p}\le \varepsilon^2<\varepsilon .\]    
In consequence, the supremum defining the norm of $\lambda$ in  the space
$\mps$  is taken over a finite family of dyadic cubes only. This implies the claim. 
\end{proof}

\begin{proposition}\label{P-lp-mps-ss}
Let $\varphi\in \Gpd$, and $\lim_{k\to\infty} 2^{-k\nd/p} {\varphi(2^k)}= 0$, 
$1\le p<\infty$. Then the embedding $\ell_p$ into $\mps$ is strictly singular. 	
	\end{proposition}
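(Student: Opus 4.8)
The plan is to verify the quantitative criterion for strict singularity from Remark~\ref{def-ext}: for every infinite-dimensional subspace $Z\subset\ell_p$ I must produce vectors $x\in Z$ with $\|x|\ell_p\|=1$ and $\|x|\mps\|$ arbitrarily small. The starting point is to write the norm as $\|x|\mps\| = \sup_{\nu\in\no} w_\nu\, M_\nu(x)$, where $w_\nu := \varphi(2^\nu)\,2^{-\nu\nd/p}$ and $M_\nu(x) := \sup_{m\in\Zn}\big(\sum_{k:\,Q_{0,k}\subset Q_{-\nu,m}}|x_k|^p\big)^{1/p}$ is the maximal local $\ell_p$-mass over dyadic cubes of side $2^\nu$. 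Since $\varphi\in\Gpd$ with $\varphi(1)=1$, the weights satisfy $w_0=1$, are nonincreasing, and by hypothesis $w_\nu\to0$; moreover $M_\nu(x)\le\|x|\ell_p\|$ and $M_\nu$ is nondecreasing in $\nu$. This is exactly the quantitative skeleton behind Lemma~\ref{L1}.

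Given $\varepsilon>0$, I would first fix $\nu_0$ with $w_{\nu_0}\le\varepsilon$. The large-scale part is then controlled for \emph{any} normalized $x$: for $\nu\ge\nu_0$ one has $w_\nu M_\nu(x)\le w_{\nu_0}\|x|\ell_p\|\le\varepsilon$. Hence the whole matter reduces to the small-scale part $\nu<\nu_0$, where $w_\nu\le w_0=1$, so that it suffices to make $M_{\nu_0-1}(x)$ small. Since a cube of side $2^{\nu_0-1}$ contains $2^{(\nu_0-1)\nd}$ lattice points, the crude bound $M_{\nu_0-1}(x)\le 2^{(\nu_0-1)\nd/p}\,\|x|\ell_\infty\|$ holds, so everything comes down to finding a normalized $x\in Z$ with very small $\ell_\infty$-norm.

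To produce such spread-out vectors inside an arbitrary $Z$, I would run a standard gliding-hump construction. Because every finite-coordinate projection restricted to the infinite-dimensional $Z$ has finite rank, its kernel in $Z$ is again infinite-dimensional; iterating, I obtain normalized $z_k\in Z$ together with disjointly supported truncations $u_k$ (supported on successive finite blocks of $\Zn$) satisfying $\|z_k-u_k|\ell_p\|\le\eta_k$ with $\sum_k\eta_k\le\eta$. Averaging, set $v_N:=N^{-1/p}\sum_{k=1}^N u_k$ and $\tilde v_N:=N^{-1/p}\sum_{k=1}^N z_k\in Z$. Disjointness of the $u_k$ gives $\|v_N|\ell_p\|\sim1$ and, crucially, $\|v_N|\ell_\infty\|\le N^{-1/p}$, so $M_{\nu_0-1}(v_N)\le 2^{(\nu_0-1)\nd/p}N^{-1/p}$. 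Combining with the large-scale estimate yields $\|v_N|\mps\|\le\max\{\varepsilon,\,2^{(\nu_0-1)\nd/p}N^{-1/p}\}\le\varepsilon$ once $N$ is large. Finally I transfer to $Z$: since $\|\tilde v_N-v_N|\ell_p\|\le N^{-1/p}\eta$ and $\ell_p\hra\mps$ with norm $1$ by Corollary~\ref{lp-mps} (here $p\ge1$, so $\mps$ is a Banach space and the triangle inequality applies), $\tilde v_N$ is nearly normalized in $\ell_p$ and has small $\mps$-norm; normalizing produces $x\in Z$ with $\|x|\ell_p\|=1$ and $\|x|\mps\|\lesssim\varepsilon$. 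As $Z$ and $\varepsilon$ were arbitrary, the embedding is strictly singular.

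The genuinely non-trivial point, and the step I expect to be the main obstacle, is the \emph{simultaneous} control of the two scale regimes inside a prescribed subspace. The assumption $w_\nu\to0$ disposes of the large cubes for free, but the small cubes $\nu<\nu_0$ must be handled within $Z$, and there is no reason a generic normalized vector of $Z$ is spread out — for instance $e_1$ has $\mps$-norm equal to $1$. Forcing $\|x|\ell_\infty\|$ (and hence every local mass at bounded scale) to vanish while keeping $\|x|\ell_p\|=1$ and $x\in Z$ is precisely what the gliding-hump-plus-averaging step accomplishes; the perturbation bookkeeping that keeps $x$ in $Z$ is then routine.
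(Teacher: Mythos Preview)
Your argument is correct, and shares with the paper the same skeleton: a gliding-hump selection of almost disjointly supported unit vectors inside the given subspace, followed by the $N^{-1/p}$-average to force the $\mps$-norm down while keeping the $\ell_p$-norm bounded below. The difference lies in how the $\mps$-norm of the averaged vector is estimated. The paper does not fix a single threshold scale; instead it builds an increasing chain of dyadic cubes $Q_1\subset Q_2\subset\cdots$, adapted so that $\lambda^{(k)}$ is essentially supported in $Q_k\setminus Q_{k-1}$ and $\varphi(\ell(Q_k))|Q_k|^{-1/p}<2^{-k}$, and then runs a three-case analysis of how the test cube $Q$ sits relative to $Q_n$. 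Your route is shorter: one threshold $\nu_0$ with $w_{\nu_0}\le\varepsilon$ handles all large scales uniformly via $M_\nu(x)\le\|x|\ell_p\|$, while for the finitely many small scales you use the crude but decisive bound $M_{\nu_0-1}(x)\le 2^{(\nu_0-1)\nd/p}\|x|\ell_\infty\|$ together with $\|v_N|\ell_\infty\|\le N^{-1/p}$. This makes the role of the hypothesis $w_\nu\to 0$ completely transparent and avoids the case split; the price is that $N$ must be taken depending on $\nu_0$ (hence on $\varepsilon$), whereas in the paper's version $\|x^{(n)}|\mps\|\to 0$ as $n\to\infty$ for a single sequence. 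Either way the conclusion follows.
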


\begin{proof}
	One can easily see that the norm of the embedding $\ell_p \hookrightarrow \mps$ is equal to $1$, recall also Corollary~\ref{lp-mps}. 
	 
		Let $X$ be an infinite-dimensional subspace of $\ell_p$. Please note that if $Q$ is a dyadic cube of size $\ell(Q)\ge 1$, then the space 
		\[ X_Q= \{\lambda\in X: \lambda_m=0 \quad\text{if} \quad Q_{0,m}\subset Q \}\] 
		is also infinite-dimensional.  To prove that the embedding is strictly singular, it is sufficient to find a sequence $x^{(n)} \in X$ such that $\|x^{(n)}|\ell_p\|\ge c>0$ and $\|x^{(n)}|\mps\|\rightarrow 0$ if $n\rightarrow \infty$, cf.   
         Remark~\ref{def-ext}. 
         
Let $0<\varepsilon <1$. We choose an element  $\lambda^{(1)}=\{\lambda^{(1)}_m\}_{m\in\Zn} \in X$ with unit norm, $\|\lambda^{(1)}|\ell_p\|=1$. 
Parallel to the proof of Lemma~\ref{L1} we can find a dyadic cube $Q_1$ such that
\begin{align*}
	\Big(\sum_{Q_{0,m}\nsubseteq Q_1} |\lambda^{(1)}_m|^p\Big)^{1/p}\le \frac{\varepsilon}{2}  \qquad \text{and}\qquad 
	\frac{\varphi(\ell(Q_1))}{|Q_{1}|^{1/p}}< \frac{1}{2}. 
\end{align*}  

Afterwards we choose $\lambda^{(2)}=\{\lambda^{(2)}_m\}\in X_{Q_1}$    with $\|\lambda^{(2)}| \ell_p\|=1$. We can find a dyadic cube $Q_2$, $Q_1\subset Q_2$, such that
\begin{align*}
  \Big(\!\!\sum_{Q_{0,m}\nsubseteq Q_2} |\lambda^{(2)}_m|^p\Big)^{1/p}\le \frac{\varepsilon}{2^2},  
  \quad \dist(Q_1,\partial Q_2)\ge 1,\quad\text{and} \quad\frac{\varphi(\ell(Q_2))}{|Q_{2}|^{1/p}}< \frac{1}{2^2}.  	 
\end{align*}  

Inductively,  for any $k\in\nat$, we can find a sequence  $\lambda^{(k)}=\{\lambda^{(k)}_m\}_{m\in\Zn}\in X_{Q_{k-1}}$    with $\|\lambda^{(k)}| \ell_p\|=1$, and  a dyadic cube $Q_k$, such that $Q_1\subset \ldots \subset  Q_{k-1}\subset Q_k$, and  
\begin{align*}
  \Big(\sum_{Q_{0,m}\nsubseteq Q_k} |\lambda^{(k)}_m|^p\Big)^{1/p}\le \frac{\varepsilon}{2^k},  
  \quad  \dist(Q_{k-1},\partial Q_k)\ge 1,\quad \text{and}\quad
	\frac{\varphi(\ell(Q_k))}{|Q_{k}|^{1/p}}< \frac{1}{2^k}.
\end{align*}  

We define $\ x^{(n)} := \ds\frac{1}{n^{1/p}} \sum_{j=1}^n \lambda^{(j)} $, $n\in\nat$. Then $x^{(n)}\in X$ and    
 \begin{align*}
	\|x^{(n)}|\ell_p\| & =  \frac{1}{n^{1/p}} \Big(\sum_{k=1}^\infty\; \sum_{m:Q_{0,m}\subset Q_k\setminus Q_{k-1}} |\lambda^{(1)}_m+\ldots + \lambda^{(n)}_m|^p   \Big)^{1/p}\nonumber\\ 
	&\ge  \frac{1}{n^{1/p}} \Big(\sum_{k=1}^n \;\sum_{m:Q_{0,m}\subset Q_k\setminus Q_{k-1}} |\lambda^{(1)}_m+\ldots + \lambda^{(n)}_m|^p   \Big)^{1/p} .\nonumber
\end{align*}
But  
	\begin{align*}
	&  \Big(\sum_{m:Q_{0,m}\subset Q_k\setminus Q_{k-1}} |\lambda^{(1)}_m+\ldots + \lambda^{(n)}_m|^p   \Big)^{1/p}\\  
	& \geq\ \Big( \sum_{Q_{0,m}\subset Q_k\setminus Q_{k-1}} |\lambda^{(k)}_m|^p\Big)^{1/p} -  \Big(\sum_{Q_{0,m}\subset Q_k\setminus Q_{k-1}} |\lambda^{(1)}_m+\ldots + \lambda^{(k-1)}_m|^p  \Big)^{1/p} \\
	& \geq\ 1- \frac{\varepsilon}{2^k} - \sum_{j=1}^{k-1} \frac{\varepsilon}{2^j}\ge 1-\varepsilon\nonumber\\
\end{align*}
Thus we arrive at 
\begin{equation}
	\|x^{(n)}|\ell_p\| \ge 1-\varepsilon .
\end{equation}

 On the other hand, let $Q$ be an arbitrary dyadic cube of size at least $1$.  Then both cubes, $Q$ and $Q_n$ are dyadic,   therefore we should consider the following three cases: \quad (i) $Q\cap Q_n =\emptyset$, \quad  (ii) $ Q\subset Q_n$, \quad (iii) $Q_n\subset  Q$.\\

In case (i),
\begin{align} 
  \Big(\sum_{Q_{0,m}\subset Q} |x^{(n)}_m|^p\Big)^{1/p} & \le  n^{-1/p} \sum_{j=1}^n \Big(\sum_{Q_{0,m}\subset Q}  |\lambda^{(j)}_m|^p\Big)^{1/p}  \nonumber\\
  & \le n^{-1/p} \sum_{j=1}^n \frac{\varepsilon}{2^j}\le  \frac{\varepsilon}{n^{1/p}}.
\label{m-norm1}\end{align}
So, by the assumptions on $\varphi\in\Gpd$ and $\ell(Q)\geq 1$,  
\begin{equation} \label{m-norm2}
	\frac{\varphi(\ell(Q))}{|Q|^{1/p}}\Big(\sum_{Q_{0,m}\subset Q} |x^{(n)}_m|^p\Big)^{1/p} \le  \frac{\varepsilon}{n^{1/p}}.
\end{equation}

Now we consider the case (ii). We can choose the smallest $k$, $1\le k\le n$, such that  $Q\subset Q_k$. Then $ Q\subset Q_k\setminus Q_{k-1}$ or $Q_{k-1}\subset Q\subset Q_k$. In the first case we have
\begin{align}
  \Big(\sum_{Q_{0,m}\subset Q} |x^{(n)}_m|^p\Big)^{1/p} & \le  n^{-1/p} \left( 1 + \sum_{j=1}^{k-1} \Big(\sum_{Q_{0,m}\subset Q}  |\lambda^{(j)}_m|^p\Big)^{1/p}   \right) \nonumber\\
  &\le n^{-1/p} \left( 1 + \varepsilon    \right),  
\label{m-norm3}\end{align}
since the sequence $\lambda^{(j)}$ equals zero on $Q$ if $j>k$, and their $\ell_p$ norm on $Q$ can be estimated by $\frac{\varepsilon}{2^j}$ if $j\le k-1$. Similar as above this leads to
\begin{equation} \label{m-norm4}
	\frac{\varphi(\ell(Q))}{|Q|^{1/p}}\Big(\sum_{Q_{0,m}\subset Q} |x^{(n)}_m|^p\Big)^{1/p} \le  \frac{1+\varepsilon}{n^{1/p}}.
\end{equation}
In the second case,  $Q_{k-1}\subset Q\subset Q_k$, we have 
\begin{align}\label{m-norm5}
	\Big(\sum_{Q_{0,m}\subset Q} |x^{(n)}_m|^p\Big)^{1/p} \le  n^{-1/p} \sum_{j=1}^k \Big(\sum_{Q_{0,m}\subset Q}  |\lambda^{(j)}_m|^p\Big)^{1/p}  
	\le  \frac{k}{n^{1/p}}.
\end{align}
But the assumptions $\varphi\in\Gpd$, $Q_{k-1}\subset Q$ and the construction of the $\{Q_k\}_k$ imply
\begin{equation}\label{m-norm6}
\frac{\varphi(\ell(Q))}{|Q|^{1/p}}\le\frac{\varphi(\ell(Q_{k-1}))}{|Q_{k-1}|^{1/p}}\le \frac{1}{2^{k-1}}, 
\end{equation}
so
\begin{equation} \label{m-norm7}
	\frac{\varphi(\ell(Q))}{|Q|^{1/p}}\Big(\sum_{Q_{0,m}\subset Q} |x^{(n)}_m|^p\Big)^{1/p} \le  \frac{k}{2^{k-1}n^{1/p}}.
\end{equation}

It remains to consider the case (iii), that is, $Q_n\subset Q$. Now,
\begin{align}
	\Big(\sum_{Q_{0,m}\subset Q} |x^{(n)}_m|^p\Big)^{1/p} & \le
	\Big(\sum_{Q_{0,m}\subset Q_n} |x^{(n)}_m|^p\Big)^{1/p} +
                                                                \Big(\sum_{Q_{0,m}\subset Q\setminus Q_n} |x^{(n)}_m|^p\Big)^{1/p}\nonumber\\
  &\le 
	  n^{1-\frac{1}{p}} + \frac{\varepsilon}{n^{1/p}},
\label{m-norm8}\end{align}
where { the first  sum on the left-hand side of the last inequality can be estimated in the same way as \eqref{m-norm5}, and  the estimate of the second sum follows from \eqref{m-norm2}.} Now, the estimate \eqref{m-norm6} with $k-1=n$ implies 
\begin{equation}\label{m-norm9}
		\frac{\varphi(\ell(Q))}{|Q|^{1/p}}\Big(\sum_{Q_{0,m}\subset Q} |x^{(n)}_m|^p\Big)^{1/p} \le  \frac{n+\varepsilon}{2^{n}n^{1/p}} .
\end{equation}
Hence, \eqref{m-norm2}, \eqref{m-norm4}, \eqref{m-norm7} and \eqref{m-norm9} lead to the desired result, 
\[\|x^{(n)}|\mps\|\rightarrow 0 \qquad \text{if}\qquad n\rightarrow \infty.
\]
\end{proof}

\begin{corollary}
  Let $1\le  p<u<\infty$. Then the embedding $\ell_p$ into $\ms$ is strictly singular. 	
\end{corollary}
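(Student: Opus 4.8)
The plan is to deduce this directly from Proposition~\ref{P-lp-mps-ss} by specialising $\varphi$ to the power function that recovers $\ms$. Recall from Example~\ref{exm-Gp}(i) (the case $u=v$) that the choice $\varphi(t)=t^{\nd/u}$, equivalently $\varphi(2^k)=2^{k\nd/u}$ for $k\in\no$, gives a function belonging to $\Gpd$ whenever $0<p\le u<\infty$, and that with this $\varphi$ the space $\mps$ coincides with $\ms$. Since we assume $1\le p<u<\infty$, the hypotheses $\varphi\in\Gpd$ and $1\le p<\infty$ of Proposition~\ref{P-lp-mps-ss} are immediately met.

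The only remaining point is the decay condition $\lim_{k\to\infty}2^{-k\nd/p}\varphi(2^k)=0$. With $\varphi(2^k)=2^{k\nd/u}$ this quantity equals
\[
  2^{-k\nd/p}\,2^{k\nd/u}=2^{k\nd\left(\frac{1}{u}-\frac{1}{p}\right)},
\]
and since $p<u$ we have $\frac1u-\frac1p<0$, so the exponent tends to $-\infty$ and the limit is indeed $0$. Hence every hypothesis of Proposition~\ref{P-lp-mps-ss} is verified for this particular $\varphi$.

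Applying Proposition~\ref{P-lp-mps-ss} with this choice then yields at once that the embedding of $\ell_p$ into $\ms$ is strictly singular, which is the claim. I expect no genuine obstacle here: the corollary is a pure specialisation, and the single substantive observation is that the \emph{strict} inequality $p<u$ is exactly what forces the strict decay $2^{-k\nd/p}\varphi(2^k)\to 0$ (rather than a positive limit); in the borderline case $p=u$ one would instead have $\ms=\ell_p$ and the identity, which is not strictly singular, so the strictness of $p<u$ is essential.
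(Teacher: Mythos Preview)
Your proof is correct and follows exactly the same route as the paper: specialise $\varphi(t)=t^{\nd/u}$, note that $\varphi\in\Gpd$ and that $p<u$ forces $2^{-k\nd/p}\varphi(2^k)\to 0$, and invoke Proposition~\ref{P-lp-mps-ss}. The paper's own proof is the one-line ``We apply Proposition~\ref{P-lp-mps-ss} with $\varphi(t)\sim t^{\nd/u}$,'' so your only difference is spelling out the hypothesis checks.
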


\begin{proof}
We apply Proposition~\ref{P-lp-mps-ss} with $\varphi(t)\sim t^{\nd/u}$.
\end{proof}

Now we return to our embedding $\id: \mpse \hra \mpsz$ and investigate its strict singularity, as announced in the beginning of the section.

\begin{theorem}\label{strictly}
  Let    ${0<} p_i<\infty$ and  $\varphi_i\in\Gpx{p_i}(\mathbb{D})$, with $\varphi_i(1)=1$, $i=1,2$. Assume $\sup_{k\in\no} \varphi_i(2^k)=\infty$, $i=1,2$, and let \eqref{cont-mps} be satisfied with  $\varrho=\min\{1,\frac{p_1}{p_2}\}$. We consider the continuous embedding
\begin{equation}\label{ss}
\id:	\mpse\hookrightarrow \mpsz .
\end{equation}
\begin{enumerate}[\upshape\bfseries (i)]
	\item
          {Let $\lim_{\nu\to\infty} 2^{-\nu\nd/p_2} \varphi_2(2^{\nu})=0$. Then $\id$ is strictly singular if $$\lim_{\nu\to\infty} 2^{-\nu\nd/p_1} \varphi_1(2^{\nu})=c>0,$$
            with the additional assumption that $p_i\ge 1$ in the case $p_1=p_2=r_{\varphi_2}$. \\
Conversely, if $\id$ is strictly singular, then  $\lim_{\nu\to\infty} 2^{-\nu\nd/p_1} \varphi_1(2^{\nu})=c>0$, that is,  $\mpse= \ell_{p_1}$.} 
	\item
	Assume that $\lim_{\nu\to\infty} 2^{-\nu\nd/p_2} \varphi_2(2^{\nu})=c>0$, {that is,  $\mpsz= \ell_{p_2}$}.
	Then $\id$ is strictly singular if, and only if, { $\lim_{\nu\to\infty} 2^{-\nu\nd/p_1} \varphi_1(2^{\nu})=c>0$ and $p_1<p_2$}.
\end{enumerate}
  \end{theorem}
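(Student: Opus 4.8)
The plan is to first rephrase both hypotheses through the limit $\lim_{\nu\to\infty} 2^{-\nu\nd/p_i}\varphi_i(2^\nu)$, which exists in $[0,1]$ by \eqref{Gp-def-disc} and $\varphi_i(1)=1$, and which by Lemma~\ref{mps-subset-c} is positive exactly when $m_{\varphi_i,p_i}=\ell_{p_i}$. Thus Part~(i) is the regime $\lim_\nu 2^{-\nu\nd/p_2}\varphi_2(2^\nu)=0$, i.e. $\mpsz\neq\ell_{p_2}$, where strict singularity is to be characterised by $\mpse=\ell_{p_1}$, and Part~(ii) is the regime $\mpsz=\ell_{p_2}$. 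I would prove the four implications separately, reducing each to the available tools: Proposition~\ref{lp-strictly} (strict singularity of $\ell_p\hookrightarrow\ell_q$ and of the Lorentz embeddings, together with stability under composition with bounded maps) and Proposition~\ref{P-lp-mps-ss} ($\ell_p\hookrightarrow m_{\varphi,p}$ strictly singular when the limit vanishes and $p\ge1$).

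For the sufficiency in Part~(i) I assume $\mpse=\ell_{p_1}$ and factor $\ell_{p_1}\hookrightarrow\mpsz$. If $p_1<p_2$ I write $\ell_{p_1}\hookrightarrow\ell_{p_2}\hookrightarrow\mpsz$, where the first arrow is strictly singular by Proposition~\ref{lp-strictly}(ii) and the second is bounded because $\varphi_2\in\Gpx{p_2}(\mathbb{D})$ with $\varphi_2(1)=1$; composition then gives the claim by Proposition~\ref{lp-strictly}(i). When a genuine index gap is present I instead route through the Lorentz space $\ell_{\rphi{\varphi_2},\infty}$ via Lemma~\ref{lorentz-mps}, the inclusion $\ell_{p_1}\hookrightarrow\ell_{\rphi{\varphi_2},\infty}$ being strictly singular by Proposition~\ref{lp-strictly}(ii) if $p_1<\rphi{\varphi_2}$ and by Proposition~\ref{lp-strictly}(iii) if $p_1=\rphi{\varphi_2}$. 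The case $p_1>p_2$ is handled in the same spirit, after observing that continuity \eqref{cont-mps} forces $\varphi_2(2^j)\lesssim 2^{j\nd/p_1}$, so Lemma~\ref{m-equiv-Gp} allows me to replace $\varphi_2$ by an equivalent weight in $\Gpx{p_1}(\mathbb{D})$. In the borderline situation where no index gap is available — in particular $p_1=p_2=\rphi{\varphi_2}$ — one must apply Proposition~\ref{P-lp-mps-ss} directly to $\ell_{p_1}\hookrightarrow\mpsz$, and this is precisely where the extra assumption $p_i\ge1$ enters.

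The heart of the matter, and the step I expect to be the main obstacle, is the necessity in Part~(i): assuming $\mpse\neq\ell_{p_1}$ I must produce an infinite-dimensional closed subspace on which $\id$ is an isomorphism, thereby ruling out strict singularity (recall Remark~\ref{def-ext}). In this regime both limits vanish, $\lim_\nu 2^{-\nu\nd/p_i}\varphi_i(2^\nu)=0$ for $i=1,2$. I would pick lattice points $m_1,m_2,\dots$ that are extremely sparse and take the unit spikes $e_{m_k}$. With $g_i(\nu)=2^{-\nu\nd/p_i}\varphi_i(2^\nu)$, which is nonincreasing, equals $1$ at $\nu=0$ and tends to $0$, the single unit cube $Q_{0,m_k}$ already yields $\|\sum_k a_k e_{m_k}\mid m_{\varphi_i,p_i}\|\ge\sup_k|a_k|$; the role of the sparseness is that any dyadic cube meeting two or more of the $m_k$ is forced to be so large that $g_i(\nu)$ times the number of points it can contain stays bounded, giving the matching upper bound $\|\sum_k a_k e_{m_k}\mid m_{\varphi_i,p_i}\|\le C\sup_k|a_k|$ for $i=1,2$ simultaneously. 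Hence the closed span of $\{e_{m_k}\}$ is isomorphic to $c_0$ in both norms, so $\id$ restricted to it is an isomorphism and is not strictly singular. Calibrating a single placement of the points so that this suppression works for $\varphi_1$ and $\varphi_2$ at once is the delicate part; the construction is modelled on the sparse sequences already employed in the proofs of Lemma~\ref{mps-subset-c} and Theorem~\ref{cont}.

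Finally, Part~(ii) I would settle by inserting $\varphi_2(2^j)\sim 2^{j\nd/p_2}$ into the continuity criterion \eqref{cont-mps}. If $p_1\ge p_2$ then $\varrho=1$ forces $\varphi_1(2^j)\sim 2^{j\nd/p_2}$, whence $p_1=p_2$ and $\mpse=\mpsz=\ell_{p_2}$, so $\id$ is the identity and not strictly singular. If $p_1<p_2$ then $\varrho=p_1/p_2$ forces $\varphi_1(2^j)\sim 2^{j\nd/p_1}$, i.e. $\mpse=\ell_{p_1}$, and $\id:\ell_{p_1}\hookrightarrow\ell_{p_2}$ is strictly singular by Proposition~\ref{lp-strictly}(ii). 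Reading the two implications together gives that $\id$ is strictly singular if and only if $\mpse=\ell_{p_1}$ and $p_1<p_2$, as claimed.
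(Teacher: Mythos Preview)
Your outline matches the paper's proof closely: the necessity in (i) is exactly the paper's construction (sparse spikes at points $(2^{j_k}-1,0,\dots,0)$ with $j_k$ chosen so that $2^{-j_k\nd/p_i}\varphi_i(2^{j_k})\le k^{-1}$ for $i=1,2$ simultaneously, yielding a common copy of $\ell_\infty$ in both spaces; the paper in fact obtains an isometric copy of $\ell_\infty$, not just $c_0$), and the sufficiency in (i) and all of (ii) follow the same factorisation strategy through $\ell_{p_2}$, $\ell_{\rphi{\varphi_2}}$ (or its Lorentz variant) and Proposition~\ref{P-lp-mps-ss}.

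There is one step that does not work as written. In the case $p_1>p_2$ you invoke Lemma~\ref{m-equiv-Gp} to ``replace $\varphi_2$ by an equivalent weight in $\Gpx{p_1}(\mathbb{D})$''. But that lemma, applied with exponent $p_1$, produces $\tilde\varphi_2\in\Gpx{p_1}(\mathbb{D})$ with $m_{\varphi_2,p_1}=m_{\tilde\varphi_2,p_1}$, not $m_{\varphi_2,p_2}=m_{\tilde\varphi_2,p_2}$, and one does not have $\tilde\varphi_2\sim\varphi_2$ pointwise in general. The paper sidesteps this by routing through $\varphi_1$ instead of modifying $\varphi_2$: since $\varphi_1(2^j)\sim 2^{j\nd/p_1}$ one has $\rphi{\varphi_1}=p_1>p_2$, so Lemma~\ref{lorentz-mps} gives $\ell_{p_1,\infty}=\ell_{\rphi{\varphi_1},\infty}\hookrightarrow m_{\varphi_1,p_2}$; the continuity hypothesis with $\varrho=1$ then yields $m_{\varphi_1,p_2}\hookrightarrow m_{\varphi_2,p_2}$, and $\ell_{p_1}\hookrightarrow\ell_{p_1,\infty}$ is strictly singular by Proposition~\ref{lp-strictly}(iii).
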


\begin{proof}
{\em Step 1}. We deal with (i). {We} prove that if  $\lim_{\nu\to\infty} 2^{-\nu\nd/p} \varphi(2^{\nu})=0$, then the space $\mps$, $0<p<\infty$,  contains an isometric copy of $\ell_\infty$. Let us choose a strictly increasing sequence $j_k\in \N$ such that
\begin{equation}
 2^{-j_k\nd/p} \varphi(2^{j_k})\le k^{-{\frac{1}{p}}}, \quad k\in\N.
\end{equation}	 
We define a subspace $X$ of $\mps$ in the following way 
\begin{align*}
  X=\bigg\{&\lambda=(\lambda_m)_{m\in \Z^d}\in \mps: \\
&\lambda_m = 
\begin{cases}
	\mu_k & \text{if} \ m=(2^{j_k}-1,0\ldots ,0),\\
	0 & \text{otherwise}, 
      \end{cases} \quad\text{for some}\; \mu=(\mu_k)\in \ell_{\infty}(\N) \bigg\}.
\end{align*}
{Then $Q_{-j_k,0}$ is the smallest dyadic cube that can contain   $k$, $k>1$, non-zero elements of the sequence $\lambda$.} So, if $j_k\le j <j_{k+1}$, then 
\begin{align*} \sup_{j\in\N: j_k\le j < j_{k+1} ; m\in \Zn}& \frac{\varphi(\ell(Q_{-j,m}))}{|Q_{-j,m}|^{1/p}}  \Big( \sum_{{\nu}:\, Q_{0,{\nu}}\subset Q_{-j,m}} \!\!\!|\lambda_{\nu}|^p\Big)^\frac{1}{p}\\
  &\le 
{k^\frac{1}{p}} \|\lambda|\ell_{\infty}\|\frac{\varphi(\ell(Q_{-j_k,m}))}{|Q_{-j_k,m}|^{1/p}}\le \|\lambda|\ell_{\infty}(\zd)\|=\|\mu|\ell_{\infty}(\nat)\|.  
\end{align*} 
Consequently,  
\begin{align*}
  \|\mu|\ell_{\infty}(\nat)\| & \le \|\lambda|\mps(\zd) \| \\
  & =  \sup_{j\in\no; m\in \Zn} \varphi(\ell(Q_{-j,m}))  \Big(|Q_{-j,m}|^{-1} \sum_{k:\, Q_{0,k}\subset Q_{-j,m}} \!\!\!|\lambda_k|^p\Big)^\frac{1}{p} \Big\}\\ & \le 	\|\mu|\ell_{\infty}(\nat)\|. 
\end{align*}

If $\varphi_1$ and $\varphi_2$  are such that $\lim_{\nu\to\infty} 2^{-\nu\nd/p_i} \varphi_i(2^{\nu})=0$, $i=1,2$,  then one can choose a sequence $j_k$  such that 
\begin{equation}
	2^{-j_k\nd/p_i} \varphi_i(2^{j_k})\le k^{-1} , \qquad k\in\nat, \quad i=1,2. 
\end{equation}	 
Then  the space $X$ is a subspace of $\mpse$ and $\mpsz$. Moreover,  
if $\lambda\in X$, then $\|\lambda|\mpse\|= 	\|\lambda|\ell_{\infty}\|= 	\|\lambda|\mpsz\|$. So $\id$ is not strictly singular in that case.
 
{\em Step 2.}   Let now $\lim_{\nu\to\infty} 2^{-\nu\nd/p_2} \varphi_2(2^{\nu})=0$ and $\lim_{\nu\to\infty} 2^{-\nu\nd/p_1} \varphi_1(2^{\nu})=c>0$.  This assumption  implies $\mpse = \ell_{p_1}$. Combining Corollary~\ref{lp-mps} and Proposition~\ref{lp-strictly}, we obtain for $p_1<p_2$ that
\[ \id : \mpse = \ell_{p_1} \hra \ell_{p_2} \hra \mpsz \]
is strictly singular. 

\noindent
If $p_1=p_2 <r_{\varphi_2}$, then 
we benefit from Proposition~\ref{lp-strictly},
regarding
\begin{equation}\label{ss2}
\id: \mpse = \ell_{p_1} = \ell_{p_2} \hra \ell_{r_{\varphi_2}}\hra m_{\varphi_2,p_2}, 
\end{equation}
and since the last but one embedding is strictly singular, so it is $\id$. 

\noindent
Analogously, if $p_1=p_2=r_{\varphi_2}\ge 1$, then  Proposition~\ref{P-lp-mps-ss} implies that  the embedding
\begin{equation}\label{ss2a}
		\id: \mpse = \ell_{p_1} \hra  m_{\varphi_2,p_1}, 
	\end{equation}
is strictly singular. 

\noindent 

It remains to deal with $p_2< p_1$. Then $\varphi_1\in\Gpx{p_1}\subset \Gpx{p_2}$ and $\rphi{\varphi_1}\geq p_1>p_2$.  So
	\[ \id: \mpse=\ell_{p_1} \hra \ell_{\rphi{\varphi_1},\infty} \hra m_{\varphi_1, p_2} \hra \mpsz, \]
	where we benefited from $p_2<\rphi{\varphi_1}$, Lemma \ref{lorentz-mps} and  Proposition~\ref{Prop-1-mps}(iii). Now the first embedding   is strictly singular, cf. Proposition~\ref{lp-strictly}, so it is $\id$.

{\em Step 3}. {It remains to consider the case   $\lim_{\nu\to\infty} 2^{-\nu\nd/p_2} \varphi_2(2^{\nu})=c>0$, i.e., 
$m_{\varphi_2,p_2}=\ell_{p_2}$. It follows from Corollary \ref{corlp} that if the embedding \eqref{ss} is continuous, then $\mpse=\ell_{p_1}$. So this case coincides with the classical statement about embeddings of $\ell_p$ spaces. } \end{proof}

We finally consider the special case $\varphi_i(t) \sim t^{\nd/u_i}$, $i=1,2$, and explicate Theorem~\ref{strictly}. \\

\begin{corollary}\label{mup-strictly}
  Let    $0< p_i \leq u_i<\infty$, $i=1,2$, and
  \[
\frac{u_1}{u_2}\leq \min\left(1, \frac{p_1}{p_2}\right).
  \]
We consider the continuous embedding
\[
\id_u:	\mse\hookrightarrow \msz .
\]

\begin{enumerate}[\upshape\bfseries (i)]
\item
  Assume that $p_2<u_2$. Then $\id_u$ is strictly singular if, and only if, $p_1=u_1$.
\item
  Assume that $p_2=u_2$. Then $\id_u$ is strictly singular if, and only if, $u_1=p_1<p_2$.

\end{enumerate}
In particular, $\id_u$ is not strictly singular whenever $p_1<u_1$.
    \end{corollary}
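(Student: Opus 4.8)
The plan is to deduce both assertions directly from Theorem~\ref{strictly} by translating its limit conditions into the present power-function setting. First I would record the elementary facts that for $\varphi_i(t) \sim t^{\nd/u_i}$ one has $\varphi_i\in\Gpx{p_i}(\mathbb{D})$ with $\rphi{\varphi_i}=u_i$, that $\sup_{k\in\no}\varphi_i(2^k)=\infty$ since $u_i<\infty$, and that $\mpse=\mse$, $\mpsz=\msz$. Moreover, the continuity hypothesis \eqref{cont-mps} required by Theorem~\ref{strictly} reduces here to $\sup_{j\in\No} 2^{j\nd(1/u_2-\varrho/u_1)}<\infty$ with $\varrho=\min\{1,\frac{p_1}{p_2}\}$, which holds if, and only if, $u_1/u_2\leq\varrho$, i.e.\ exactly the standing assumption $\frac{u_1}{u_2}\leq\min(1,\frac{p_1}{p_2})$. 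Thus Theorem~\ref{strictly} applies.

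The crucial observation is the computation
\[
2^{-\nu\nd/p_i}\,\varphi_i(2^\nu) \sim 2^{\nu\nd\left(\frac{1}{u_i}-\frac{1}{p_i}\right)},\qquad i=1,2,
\]
whose exponent is nonpositive because $p_i\leq u_i$. Hence $\lim_{\nu\to\infty}2^{-\nu\nd/p_i}\varphi_i(2^\nu)$ is a positive constant precisely when $p_i=u_i$, and equals $0$ precisely when $p_i<u_i$. This dichotomy is exactly what separates the two regimes of Theorem~\ref{strictly}.

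For part (i), the assumption $p_2<u_2$ forces the index-$2$ limit to vanish, placing us in Theorem~\ref{strictly}(i); that result gives strict singularity if, and only if, the index-$1$ limit is positive, i.e.\ $p_1=u_1$. The borderline hypothesis $p_i\geq 1$ imposed there in the case $p_1=p_2=\rphi{\varphi_2}$ never intervenes, since $p_2<u_2=\rphi{\varphi_2}$ already excludes $p_2=\rphi{\varphi_2}$. For part (ii), the assumption $p_2=u_2$ yields a positive index-$2$ limit, so Theorem~\ref{strictly}(ii) applies and gives strict singularity if, and only if, the index-$1$ limit is positive ($p_1=u_1$) together with $p_1<p_2$; this is precisely the stated condition $u_1=p_1<p_2$.

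The concluding assertion is then immediate: in either case strict singularity forces $p_1=u_1$, so whenever $p_1<u_1$ the embedding $\id_u$ cannot be strictly singular. There is essentially no obstacle in this argument, as Theorem~\ref{strictly} carries the full weight; the only point deserving a moment's care is checking that the exceptional $p_i\geq 1$ condition in Theorem~\ref{strictly}(i) is vacuous throughout the regime $p_2<u_2$.
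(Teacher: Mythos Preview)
Your proposal is correct and follows exactly the intended route: the paper presents this corollary as an immediate specialisation of Theorem~\ref{strictly} to $\varphi_i(t)\sim t^{\nd/u_i}$, and the accompanying remark makes precisely your observation that the exceptional case $p_1=p_2=\rphi{\varphi_2}$ is automatically ruled out in part~(i) because $p_2<u_2=\rphi{\varphi_2}$. There is nothing to add.
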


   \begin{remark}
      The above result obviously extends Proposition~\ref{lp-strictly} in view of Corollary~\ref{lp-mps} and Lemma~\ref{lorentz-mps}. Note that in the above setting the special situation $p_1=p_2=\rphi{\varphi_2}$ does not appear in part (i) of Corollary~\ref{mup-strictly}, since this means now $p_1=p_2=u_2$ different from Theorem~\ref{strictly}(i).
      \end{remark}

\end{document}